\date{\today}
\def\fff{{\sigma}}
\def\1{{\bf 1}}
\def\div{{\rm div}}
\def\ett{{\bf 1}}
\def\GZ{{\mathcal{GZ}}}
\def\Zy{{\mathcal Z}}
\def\deg{\text{deg}\,}
\def\w{\wedge}
\def\dbar{\bar\partial}
\def\C{{\mathbb C}}
\def\w{{\wedge}}
\def\P{{\mathbb P}}
\def\Pk{{\mathbb P}}
\def\A{{\mathcal A}}
\def\B{{\mathcal B}}
\def\S{{\mathcal S}}
\def\Cu{{\mathcal C}}
\def\Hom{{\rm Hom\, }}
\def\codim{{\rm codim\,}}
\def\Kers{{\mathcal Ker\,}}
\def\Z{{\mathbb Z}}
\def\E{{\mathcal E}}
\def\Re{{\rm Re\,  }}
\def\J{{\mathcal J}}
\def\nbh{neighborhood }
\def\be{\begin{equation}}
\def\ee{\end{equation}}
\def\Ok{\mathcal O}
\def\mult{{\rm mult}}
\def\s{{\circ}}
\newtheorem{thm}{Theorem}[section]
\newtheorem{lma}[thm]{Lemma}
\newtheorem{cor}[thm]{Corollary}
\newtheorem{prop}[thm]{Proposition}
\theoremstyle{definition}
\newtheorem{df}[thm]{Definition}
\theoremstyle{remark}
\newtheorem{preremark}[thm]{Remark}
\newtheorem{preex}[thm]{Example}
\newenvironment{remark}{\begin{preremark}}{\qed\end{preremark}}
\newenvironment{ex}{\begin{preex}}{\qed\end{preex}}
\numberwithin{equation}{section}
\title[Global representation of Segre numbers by Monge-Amp\` ere products]{Global representation of Segre numbers by Monge-Amp\` ere products}
\begin{document}

\date{\today}

\author[Andersson \& Eriksson \& Samuelsson Kalm \& Wulcan \& Yger]{Mats Andersson \& Dennis Eriksson \& H\aa kan Samuelsson Kalm \& \\
Elizabeth Wulcan \&  Alain Yger}

\address{Department of Mathematics\\Chalmers University of Technology and the University of
Gothenburg\\S-412 96 G\"OTEBORG\\SWEDEN}
\address{Institut de Math\'ematique \\
Université Bordeaux 1 \\
33405, Talence \\
France}

\email{matsa@chalmers.se, dener@chalmers.se, hasam@chalmers.se, wulcan@chalmers.se, Alain.Yger@math.u-bordeaux.fr}

\subjclass{}

\thanks{The first, third and fourth  author  were
  partially supported by the Swedish
  Research Council}

\begin{abstract}
On a reduced analytic space $X$ we introduce the concept of a generalized cycle, which
extends the notion of a formal sum of analytic subspaces to include also a form part.
We then consider a suitable equivalence relation and corresponding quotient $\mathcal{B}(X)$
that we think of as an analogue of the Chow group and a refinement of de Rham cohomology.
This group allows us to study both global and local intersection theoretic properties.

We provide many $\mathcal{B}$-analogues of classical intersection theoretic constructions:
For an analytic subspace $V\subset X$ we define a $\mathcal{B}$-Segre class, which is an element of
$\mathcal{B}(X)$ with support in $V$. It satisfies a global King formula and, in particular, its multiplicities
at each point coincide with the Segre numbers of $V$. When $V$ is cut out by a section of a vector bundle
we interpret this class as a Monge-Amp\`ere-type product. For regular embeddings we construct 
a $\mathcal{B}$-analogue of the Gysin morphism.
\end{abstract}


\maketitle

\section{Introduction}

Throughout this paper $X$ is a reduced analytic space of pure dimension $n$ and
 $\J\to X$ is  a coherent ideal sheaf  with zero set $Z$  with codimension
$\kappa$. 
Tworzewski, \cite{Twor}, and 
Gaffney and Gassler, \cite{GG},
independently introduced,
at each point $x\in X$,  numbers $e_{\kappa}(\J,x),\ldots, e_n(\J,x)$ that generalize
the Hilbert-Samuel multiplicity at $x$. 
These definitions,
although slightly different, are both of a geometric nature.
There is also a purely algebraic definition, see \cite{AM} and \cite{AR} by Achilles-Manaresi and Achilles-Rams, respectively. 
In \cite{aswy} were introduced semi-global currents
whose Lelong numbers at $x$ are precisely the $e_k(\J,x)$,  thus providing an analytic definition. 
Following \cite{GG} we call these numbers Segre numbers and, indeed, we will see in Theorem~\ref{genking} below that
they are closely related to Segre classes.

The main goal in this paper is to define concrete global analytic-geometric objects
that represent the Segre numbers at each point.
A secondary goal is to provide a framework, based on currents,  to  
connect local intersection theory with global constructions.

Intersection theory 
deals with the $\Z$-module $\Zy(X)$ of analytic cycles  and its  quotient 
module $\A(X)$, the Chow group. In general there are no cycles  or
elements in $\A(X)$ that can represent the Segre numbers at each point.  
To find global representations we introduce an extension $\GZ(X)$ of $\Zy(X)$ that we call the $\Z$-module of
\emph{generalized cycles}. Formally the elements in $\GZ(X)$ are 
a certain kind of closed currents but we prefer to think of them as geometric objects. In particular,
ordinary cycles are certainly geometric objects but formally represented by their associated Lelong currents in $\GZ(X)$.   
Many of the well-known geometric properties of
$\Zy(X)$ extend to $\GZ(X)$:  We have the natural grading by dimension
$\GZ(X)=\oplus_0^n\GZ_k(X)$,    where
$\GZ_k(X)$ are the submodules of generalized cycles of pure dimension $k$. At each point 
a generalized cycle $\mu$ has a well-defined multiplicity that is an integer.
There is a notion of Zariski support of $\mu$, 
and any $\mu$ has a unique decomposition in irreducible components.  
Moreover, $\GZ(X)$ is closed under multiplication by components
of Chern and  Segre forms of Hermitian vector bundles\footnote{All 
vector bundles in this paper are holomorphic.}. 
To get independence of various choices we introduce a certain quotient module $\B(X)$
of $\GZ(X)$; $\B(X)$ preserves the above-mentioned geometric properties of $\GZ(X)$. For instance,  $\Zy(X)$
is a submodule of $\B(X)$, we have a grading by dimension
$\B(X)=\oplus_0^n\B_k(X)$ 
and well-defined multiplicities, etc.  Moreover, $\B(X)$ admits a multiplication by components of Chern 
and Segre classes.
A proper mapping\footnote{Mappings 
between spaces are always assumed to be holomorphic.} $f\colon X'\to X$ induces a mapping 
$f_*\colon \GZ(X')\to \GZ(X)$, which in turn induces a mapping $\B(X')\to \B(X)$.
Assume that $i\colon V\hookrightarrow X$ is a subvariety.  
The image of the injective mapping $i_*\colon \GZ(V)\to \GZ(X)$ is precisely
the elements in $\GZ(X)$ that have Zariski support in $V$. 
Conceptually we identify $\GZ(V)$ with its image.  
In the same way  $\B(V)$ is identified with the elements in  $\B(X)$ that have Zariski support on $V$.

\smallskip
We define the {\it $\B$-Segre class}
$S(\J,X)$ in $\B(Z)$ in analogy with the Segre class in $\A(Z)$, cf.~Remark~\ref{asegre} below:
First assume that $X$ is irreducible. If $\J$ vanishes identically on $X$, then $S(\J,X)=1$ on $X$.
Otherwise,  let $\pi\colon X'\to X$ be any modification of $X$ such that
the ideal sheaf  $\pi^*\J$ is principal\footnote{In this paper we let $\pi^*\J$ denote the ideal generated by the pullback of generators of $\J$.},  let
$c_1(L)$ be the first Chern class of the line bundle $L$ defining the
exceptional divisor $D$ in $X'$, and let $[D]$ be its Lelong current. For instance, $X'$ can be
the blow-up of $X$ along $\J$. 
Then 
\begin{equation}\label{segredef}
S(\J,X)=\pi_*\big([D]\w \frac{1}{1+c_1(L)}\big)=
\sum_{j=1}^{n} (-1)^{j-1}\pi_*\big([D] \w c_1(L)^{j-1}\big).
\end{equation}
Since $\pi$ is proper, 
\eqref{segredef} defines
an element in $\B(Z)$.  We will see that it is independent of the choice of
modification. 
If $X$ consists of the irreducible components $X_1, X_2,\ldots $, then
we let $S(\J, X)=S(\J,X_1)+S(\J,X_2)+\cdots$ which is a locally finite sum on $X$.

We are now ready to formulate our first main result, which is a generalized King formula,
\cite{GriffKing, King}, for these objects and that in particular
provides the desired global representation of the Segre numbers of $\J$. 
Let $S_k(\J,X)$ be the component of $S(\J,X)$ in $\B_{n-k}(Z)$.

\begin{thm}[Global generalized King formula] \label{genking} 
Let $\J\to X$ be a coherent ideal  sheaf over a reduced  analytic space  of pure dimension $n$ 
and let $\kappa$ be the codimension of the zero set $Z$ of $\J$.
The class $S(\J,X)$ only depends on the integral closure class of $\J$.
We have unique decompositions 
\begin{equation}\label{king} 
S_{k}(\J,X)=\sum_j \beta_j^k [Z_j^k]+N_k^\J,  \quad k=0,1,2, \ldots,
\end{equation}
in $\B_{n-k}(Z)$, 
where  $Z_j^k$ are the (Fulton-MacPherson) distinguished varieties of $\J$ of codimension $k$,
$\beta_j^k$ are positive integers, and $N_k^\J$ has the following property:   The multiplicities
$\mult_x  N_k^\J$ 
are nonnegative integers, and the set of $x$ where $\mult_x  N_k^\J \ge 1$ has
codimension at least $k+1$.  
Moreover, $S_{k}(\J,X)=0$ for $k<\kappa$, $N^\J_{\kappa}=0$, and  
\begin{equation}
\mult_x S_k(\J,X)=e_{k}(\J, x), \quad k=\kappa, \ldots, n, \ \ x\in X.
\end{equation}
\end{thm}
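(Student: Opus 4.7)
The plan is to fix a principalization $\pi\colon X'\to X$ of $\J$, say the normalized blow-up of $X$ along $\J$, and to analyze the defining formula \eqref{segredef} component-by-component of the exceptional divisor. First, I would establish that $S(\J,X)\in\B(Z)$ is independent of the choice of principalization and depends only on the integral closure class of $\J$: any two principalizations are dominated by a third, and the projection formula modulo the relations defining $\B(Z)$ ensures that the pushforwards $\pi_*([D]\wedge c_1(L)^{j-1})$ agree. Integral closure invariance then follows because ideals with the same integral closure share a common normalized blow-up.

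On $X'$, write $D=\sum_i m_i D_i$ as a sum of irreducible components with $m_i\in\Z_{>0}$, and set $W_i:=\pi(D_i)$; by definition the $W_i$ are the Fulton--MacPherson distinguished varieties of $\J$. Expanding \eqref{segredef} and collecting terms of pure dimension $n-k$ gives
\[
S_k(\J,X)=(-1)^{k-1}\sum_i m_i\,\pi_*\bigl([D_i]\wedge c_1(L)^{k-1}\bigr)\in\B_{n-k}(Z).
\]
A fiber-dimension count shows that this pushforward vanishes when $\codim W_i>k$, and that when $\codim W_i=k$ the pushforward is a positive-integer multiple of the Lelong current $[W_i]$, the coefficient being $m_i$ times an intersection number of $c_1(L|_{D_i})^{k-1}$ along a generic $(k-1)$-dimensional fiber of $\pi|_{D_i}\colon D_i\to W_i$, positive because $\O(-D)$ is $\pi$-ample. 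Collecting these over all $i$ with $\codim W_i=k$ gives the cycle part $\sum_j \beta_j^k [Z_j^k]$ with $\beta_j^k\in\Z_{>0}$, and the remaining contributions (coming from $D_i$ with $\codim W_i<k$) form $N_k^\J$. Since outside the singular loci of the $W_i$ these contributions are smooth forms, their multiplicities vanish at generic points of the $W_i$, so the set $\{\mult_x N_k^\J\ge 1\}$ lies in codimension $\ge k+1$; nonnegativity and integrality of multiplicities follow from the corresponding properties in $\B$ of Lelong-type pushforwards.

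The vanishings $S_k(\J,X)=0$ for $k<\kappa$ and $N^\J_\kappa=0$ follow from $\codim \pi(D)=\kappa$: no $W_i$ has codimension less than $\kappa$, and at $k=\kappa$ every contributing $D_i$ satisfies $\codim W_i=\kappa$ so there is no residual. For the identification $\mult_x S_k(\J,X)=e_k(\J,x)$, I would compare locally with the semi-global currents of \cite{aswy}: near $x$ those currents coincide with the pushforwards defining $S_k(\J,X)$, and their Lelong numbers were shown there to equal the Segre numbers $e_k(\J,x)$; since $\B$-multiplicities coincide with Lelong numbers of representing currents, equality follows.

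The main obstacle is the clean extraction of the cycle/residual decomposition at the level of $\B$: one must show that $\beta_j^k$ is a well-defined positive integer independent of both the principalization $\pi$ and the Hermitian metric on $L$, and that the form-part contributions to $N_k^\J$ indeed have multiplicity-support only in codimension $\ge k+1$. This requires careful local analysis of the fiber structure of $\pi$ at generic points of each distinguished $W_i$, together with a robust use of the equivalence relation defining $\B(X)$ to absorb the metric-dependent ambiguities in the Chern form representing $c_1(L)$.
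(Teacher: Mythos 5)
Your overall strategy for the structural part of \eqref{king} is a reasonable direct attack on the blow-up, and several of your steps are sound: the dimension principle does kill $\pi_*([D_i]\w c_1(L)^{j-1})$ when $\codim \pi(D_i)>k$, the coefficient over a distinguished variety of codimension exactly $k$ is indeed a relative intersection number of $\Ok(-D)$ along a generic $(k-1)$-dimensional fiber (positive on the normalized blow-up), and the reduction of $\mult_x S_k(\J,X)=e_k(\J,x)$ to the semi-global currents of \cite{aswy} is exactly what the paper does. Note, for comparison, that the paper's own proof is much shorter precisely because it does \emph{not} re-derive the decomposition on the blow-up: it localizes, replaces $S_k$ by the positive representative $M^\fff_k$ for a trivially metrized trivial bundle, observes that this is literally the current of \cite{aswy}, and imports the whole of \cite[Theorem~1.1]{aswy} together with the uniqueness of the fixed/moving decomposition in $\B$.

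There is, however, a genuine gap in your treatment of $N_k^\J$. From ``the contributions of the $D_i$ with $\codim \pi(D_i)<k$ are smooth outside a proper subvariety of $W_i=\pi(D_i)$, hence have multiplicity $0$ at generic points of $W_i$'' you conclude that $\{x:\mult_x N_k^\J\ge 1\}$ has codimension at least $k+1$. That inference is invalid: it only shows the bad set is contained in a proper analytic subset of $W_i$, which has codimension at least $\codim W_i+1\ge\kappa+1$ in $X$ --- not at least $k+1$, and for $k>\kappa$ these are very different bounds. The codimension-$(k+1)$ statement is the genuinely hard content of the theorem (it is essentially \cite[Theorem~1.1]{aswy}); to get it one must rule out that the dimension-$(n-k)$ piece of an irreducible moving contribution carries generic Lelong number $\ge 1$ along some codimension-$k$ subvariety of $W_i$. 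That can be done by combining Siu's decomposition theorem with the irreducibility of $\tau_*\alpha$ (which forces $\1_V\mu=0$ for proper $V\subset W_i$), but Siu's theorem applies to \emph{positive} closed currents, and your chosen representative $(-1)^{k-1}\pi_*([D_i]\w c_1(L)^{k-1})$ for an arbitrary Hermitian metric on $L$ need not be positive. So either you must work with the positive representatives $M^\fff_k$ (trivial bundle, trivial metric locally, as in the paper) and then you are in effect re-proving \cite[Theorem~1.1]{aswy}, or you must supply the missing positivity and Siu-type argument. A secondary omission: you never address the \emph{uniqueness} of the decomposition \eqref{king}, which in the paper follows from the uniqueness of the decomposition of an element of $\B$ into irreducible components and fixed/moving parts together with the injectivity of $\Zy(X)\to\B(X)$.
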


Our next objective is to present specific representatives for the $\B$-Segre class
 $S(\J,X)$.   Assume that we have a holomorphic section $\fff$ of 
a Hermitian vector bundle $E\to X$ such that $\fff$ generates $\J$.  If $X$\   is projective
such a $\fff$ always exists.   One can give a meaning to the  Monge-Amp\`ere products
$
(dd^c\log|\fff|^2)^k
$
for all $k=0, 1,\ldots$, as follows.  To begin with it is defined as $1$ when $k=0$.  The higher powers
are defined recursively in \cite{A2} as 
\begin{equation}\label{recur}
(dd^c\log|\fff|^2)^k=dd^c\big(\log|\fff|^2 \1_{X\setminus Z}(dd^c\log|\fff|^2)^k\big).
\end{equation}
For $k\le \codim \J$ this definition coincides with 
Demailly's extension of the
classical Bedford-Taylor definition. 
Proposition~4.4 in \cite{A2} states that
\begin{equation}\label{44}
(dd^c\log|\fff|^2)^k=\lim_{\epsilon\to 0} (dd^c\log(|\fff|^2+\epsilon))^k,
\end{equation}
which gives further motivation for the notation. 
It was recently proved in \cite{abw} that one can also take the limit when
$\ell\to \infty$ of $(dd^c u_\ell)^k$, where  
$u_\ell=\max(\log|\fff|^2, -\ell)$;  several other, but not all (sic!),
sequences of plurisubharmonic  functions decreasing to $\log|\fff|^2$ also work.

 \begin{thm}\label{thm2}
Let $\fff$ be a holomorphic section of a Hermitian vector bundle $E\to X$
and let $\J$ be the ideal sheaf generated by $\fff$.  The current
\begin{equation}\label{orvar}
M_k^{\fff}:=\1_Z(dd^c\log|\fff|^2)^k,  \quad k=0,1,2, \ldots,
\end{equation}
is a generalized cycle that represents the $\B_{n-k}(Z)$-class $S_{k}(\J,X)$.
\end{thm}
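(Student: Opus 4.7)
The plan is to work on a modification $\pi\colon X'\to X$ where $\pi^*\J$ is principal, use Poincar\'e--Lelong together with the recursive definition \eqref{recur} to compute $(dd^c\log|\pi^*\fff|^2)^k$ explicitly, and then push the result down to $X$. A judicious choice of Hermitian metric on the exceptional line bundle will make the outcome match the defining formula \eqref{segredef} of $S_k(\J,X)$ exactly, avoiding any spurious correction terms.

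Take $\pi\colon X'\to X$ to be, e.g., the normalized blow-up of $X$ along $\J$, with exceptional divisor $D$ and associated line bundle $L$ on $X'$ carrying a global section $\fff^0$ whose zero divisor is $D$. Since $\pi^*\J=(\fff^0)$ the factorization $\pi^*\fff=\fff^0\otimes\tilde\fff$ holds with $\tilde\fff$ a nowhere-vanishing section of $\pi^*E\otimes L^{-1}$. Equip $L$ with the smooth Hermitian metric $h_L$ characterized by $|\fff^0|_{h_L}^2 = |\pi^*\fff|_{\pi^*h_E}^2$; this is a genuine smooth metric because $\tilde\fff$ is nowhere vanishing, so both sides vanish to the same order along $D$ and their quotient is smooth and positive. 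Under the induced product decomposition $\pi^*h_E=h_L\otimes h_F$ on $\pi^*E=L\otimes(\pi^*E\otimes L^{-1})$ one then has $|\tilde\fff|_{h_F}\equiv 1$, and Poincar\'e--Lelong reduces to
\[
dd^c\log|\pi^*\fff|^2 = [D]-c_1(L,h_L).
\]

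A straightforward induction on $k$ using \eqref{recur}, the fact that $-c_1(L)$ is smooth and closed, and that $\1_{X'\setminus D}$ acts trivially on smooth terms while killing $[D]$-supported ones, then yields
\[
(dd^c\log|\pi^*\fff|^2)^k = [D]\wedge(-c_1(L))^{k-1} + (-c_1(L))^k,
\]
so that $M_k^{\pi^*\fff} = \1_D(dd^c\log|\pi^*\fff|^2)^k = (-1)^{k-1}[D]\wedge c_1(L)^{k-1}$, which is manifestly a generalized cycle on $X'$. To push the identity down, combine \eqref{44} with the identity $\pi_*\pi^*=\mathrm{id}$ on smooth forms (valid because $\pi$ is a modification) to obtain $\pi_*(dd^c\log|\pi^*\fff|^2)^k = (dd^c\log|\fff|^2)^k$. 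Since $\pi_*((-c_1(L))^k)$ is the pushforward of a smooth form along a modification, it is $L^1_{\loc}$ on $X$ and carries no mass on the proper subvariety $Z$; applying $\1_Z$ therefore extracts only the first summand, giving
\[
M_k^\fff = (-1)^{k-1}\pi_*\bigl([D]\wedge c_1(L)^{k-1}\bigr).
\]
This is precisely the $\B_{n-k}(Z)$-component of the right-hand side of \eqref{segredef}, so $M_k^\fff$ is a generalized cycle (as a proper pushforward of one) and represents $S_k(\J,X)$ in $\B_{n-k}(Z)$.

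I expect the main obstacle to be the last step, namely rigorously justifying that $\1_Z\pi_*((-c_1(L))^k)=0$. This rests on the general principle that pushforwards of smooth forms along modifications are $L^1_{\loc}$ and hence do not charge proper subvarieties, and on the compatibility of this principle with the operator $\1_Z$ developed earlier in the paper. One also uses the independence of $S(\J,X)$ from the choice of modification and metric (remarked after \eqref{segredef}) to legitimize the specific metric $h_L$ constructed above.
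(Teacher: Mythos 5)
Your proposal is correct and follows essentially the same route as the paper: the paper's proof also principalizes $\J$ on a modification, endows the exceptional line bundle with the metric induced by $|\fff^0|=|\tau^*\fff|$ so that Poincar\'e--Lelong gives $dd^c\log|\tau^*\fff|^2=[D]+\hat\omega$ with $\hat\omega=-\hat c_1(L_D)$, identifies $(dd^c\log|\tau^*\fff|^2)^k$ with $([D]+\hat\omega)\wedge\hat\omega^{k-1}$ (citing \cite[(4.6)]{A2} where you run the induction on \eqref{recur} yourself), pushes forward via the $\epsilon$-regularization, and kills the smooth part with $\1_Z$ using \eqref{linalg}. The only thing you omit is the degenerate case where $\fff$ vanishes identically on a component of $X$ (so $Z$ is not proper there and your last step does not apply); the paper disposes of this case separately at the outset, and you should too.
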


Since $\Zy(Z)$ is a subgroup of $\B(Z)$ we conclude the following global version of \cite[Theorem~1.1]{aswy} 
from Theorems~\ref{genking} and \ref{thm2}.

\begin{cor}\label{snabela}
We have  unique decompositions
\begin{equation}\label{kingmf} 
M_k^\fff =\sum_j \beta_j^k [Z_j^k]+N_k^\fff,  \quad k=\kappa,  \ldots,  n,
\end{equation}
where $N_k^\fff$ are elements in $\GZ_{n-k}(Z)$.  
In particular,  $\mult_x M_k^\fff$ is equal to the Segre number
$e_{k}(\J,x)$ at each point $x$. 
\end{cor}

Given a generalized cycle $\mu\in\GZ_m(X)$ with Zariski support $|\mu|$ we define 
in Section~\ref{segreclass} for each $k\ge 0$ a generalized cycle
$M_k^{\fff}\w\mu$ with Zariski support on $Z\cap |\mu|$ and dimension $m-k$.
Its class in $\B_{m-k}(Z\cap|\mu|)$
only depends on $\J$ and the class of $\mu$ in $\B_m(X)$. 
We let $M^\fff\w\mu= M^\fff_0\w\mu+M^\fff_1\w\mu+\cdots$.  
We think of $M^\fff\wedge\mu$ as  (the push-forward to $X$ of) a representative of
the Segre class $S_k(\J,\mu)$  of $\J$ on $\mu$, cf.~Remark~\ref{tellus}.

\smallskip
Notice  that a coherent ideal sheaf $\J\to X$ can be identified with the, 
possibly non-reduced, embedded space $Z_\J\hookrightarrow X$ with
underlying reduced space $Z$ and structure sheaf $\Ok_X/\J$.  
If $i\colon \mu\hookrightarrow X$ is a reduced analytic subspace, then 
we denote by $s(\J,\mu)$ the class in  $\A(Z)$, called the Segre class,
that is denoted by $s(Z_{i^*\J},\mu)$ in \cite{Fult},
cf.~Remark~\ref{asegre} below.

\smallskip
In intersection theory the notion of regular embedding $W\hookrightarrow X$ 
plays a central role. With the identification above ``regular'' means that the associated sheaf $\J\to X$
is locally a complete intersection\footnote{We will assume that a regular embedding has codimension
$\kappa\ge 1$.}.
Since our second goal concerns intersection theory we will pay special attention to such sheaves $\J$
and describe $S(\J,X)$ in more detail. 
In this case the normal cone $N_\J X$ is a vector bundle over 
$Z$ and we let  
$s(N_\J X)=1+s_1(N_\J X)+s_2(N_\J X)+\cdots+s_{n-\kappa}(N_\J X)$  
be its associated total Segre class. 
Here lower index $\ell$ denotes the component of bidegree $(\ell,\ell)$.    
Let $[Z_\J]$ be (the Lelong current of) the fundamental
cycle of $Z_\J$, cf.~\cite[Ch.\ 1.5]{Fult}.

\begin{prop}\label{gata1}
If $\J$ defines a regular embedding $Z_\J\hookrightarrow X$,  then
$$
S_{k}(\J,X)=s_{k-\kappa}(N_\J X)\w [Z_\J], \quad k=\kappa, \ldots, n,
$$
in $\B_{n-k}(X)$.
\end{prop}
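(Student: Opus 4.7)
The plan is to compute $S(\J,X)$ by choosing the modification $\pi\colon\tilde X\to X$ in \eqref{segredef} to be the blow-up of $X$ along $\J$; this is legitimate because $S(\J,X)$ is independent of the chosen modification. When $\J$ defines a regular embedding, the blow-up has a clean geometric description (cf.\ Fulton, Appendix B.6): the exceptional divisor $D\subset\tilde X$ is canonically identified with the projectivized normal bundle $\P(N_\J X)\to Z_\J$, which is a $\P^{\kappa-1}$-bundle, and the line bundle $L=\O_{\tilde X}(D)$ restricts on $D$ to the tautological line subbundle $\O_{\P(N_\J X)}(-1)$. Setting $\zeta:=c_1(\O_{\P(N_\J X)}(1))$, this gives $c_1(L|_D)=-\zeta$.

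Let $\iota\colon D\hookrightarrow\tilde X$ be the inclusion, $q\colon D\to Z$ the bundle projection, and $i\colon Z\hookrightarrow X$, so that $\pi\circ\iota=i\circ q$. The key computational step is the projection formula for the $\B$-product with $[D]=\iota_*1$:
$$
[D]\w c_1(L)^{k-1}=\iota_*\big(c_1(L|_D)^{k-1}\big)=(-1)^{k-1}\iota_*(\zeta^{k-1}).
$$
Pushing forward through $\pi$ and combining with the sign in \eqref{segredef} yields
$$
S_{k}(\J,X)=(-1)^{k-1}\pi_*\big([D]\w c_1(L)^{k-1}\big)=i_*q_*(\zeta^{k-1}).
$$
The classical fiber-integration formula for Segre classes of a projective bundle then gives $q_*(\zeta^{k-1})=s_{k-\kappa}(N_\J X)\w[Z_\J]$, with the convention $s_j=0$ for $j<0$, which is consistent with $S_k(\J,X)=0$ for $k<\kappa$ from Theorem~\ref{genking}. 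Assembling everything gives the claimed formula in $\B_{n-k}(Z)\hookrightarrow\B_{n-k}(X)$.

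The algebro-geometric content above is entirely classical; the real work is to justify each step within the $\B$-theory developed in the paper. The three identities to verify are: (i) the $\B$-projection formula $c_1(L)^{k-1}\w\iota_*\mu=\iota_*(c_1(L|_D)^{k-1}\w\mu)$ for the Chern-class multiplication in $\B$; (ii) the functoriality $\pi_*\iota_*=i_*q_*$ of proper pushforward on $\B$; and (iii) the Segre-class fiber-integration identity $q_*(\zeta^{\kappa-1+j})=s_j(N_\J X)\w[Z_\J]$ in $\B(Z)$. I expect (iii) to be the main technical point, since it requires the $\B$-Segre class of a vector bundle over $Z$ to pair correctly with the fundamental cycle $[Z_\J]$, which in general carries non-trivial multiplicities on the reduced components of $Z$. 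This should nevertheless follow from smooth fiber integration on the $\P^{\kappa-1}$-bundle together with the definition of the $\B$-Segre class, mirroring the classical Segre-class computation in Chapter~3 of Fulton.
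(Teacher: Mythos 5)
Your route is the same one the paper takes: Proposition~\ref{gata1} is proved by reducing to Proposition~\ref{gata2} when a defining section exists and otherwise repeating that proof at the level of classes, and the proof of Proposition~\ref{gata2} is exactly your diagram --- the exceptional divisor of $Bl_\J X$ is identified with $\P(N_\J X)$, the line bundle $L_D$ restricts there to $\Ok_{N_\J X}(-1)$, and fiber integration over the $\P^{\kappa-1}$-bundle produces the Segre class. Your items (i) and (ii) are indeed routine in the $\B$-framework, via \eqref{ingen}, \eqref{linalg} and Proposition~\ref{pluttens}.

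The gap is in how you resolve (iii), and it is concentrated in the innocuous-looking identity $[D]=\iota_*1$. The Lelong current of the exceptional divisor is $[D]=\sum_\ell a_\ell[D_\ell]$ with integer multiplicities $a_\ell$ that are in general greater than $1$ (the divisor is non-reduced whenever $Z_\J$ is), so what fiber integration over the reduced bundle $q\colon|D|\to Z$ actually yields is $\sum_\ell a_\ell\, s_{k-\kappa}(N_\J X)\w[Z_\ell]$. To conclude you must identify $\sum_\ell a_\ell[Z_\ell]$ with the fundamental cycle $[Z_\J]$, and this does not follow from fiber integration or from the definition of the Segre class: it is a separate statement about regular embeddings. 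The paper proves it by observing that the degree-$\kappa$ part of the computation gives $M_{\kappa}^{s}=\sum_\ell a_\ell[Z_\ell]$ for a local minimal set of generators $s$, that $M^{s}_{\kappa}$ is the proper intersection $[\div s_1]\w\cdots\w[\div s_\kappa]$ by \cite[Ch.~3.16, Thm~3]{Ch}, and that this proper intersection equals $[Z_\J]$ for a regular embedding by \cite[Ch.~7]{Fult}. You correctly flag the multiplicities as the main technical point, but ``smooth fiber integration on the $\P^{\kappa-1}$-bundle together with the definition of the $\B$-Segre class'' cannot supply this input; you need the Chirka--Fulton identification (or an equivalent computation of the $a_\ell$) as an explicit extra ingredient.
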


As in the case with general ideal sheaves we are interested in specific representatives,
so let us assume that $\J$ is defined by a section $\varphi$ of a Hermitian
vector bundle $F\to X$ and let
$F'$ be the pull-back of $F$ to $Z$. There is a canonical holomorphic embedding
$i_\varphi\colon N_\J X\to F'$ of $N_\J X$ in $F'$,
see Section~\ref{regular}.
Let us equip
$N_\J X$ with the induced Hermitian metric and let $\hat s(N_\J X)$ be 
the associated total Segre form which indeed  is smooth on $Z$,  see 
Section~\ref{prel}.

\begin{prop}\label{gata2}
If $\varphi$ is a section of the Hermitian vector bundle $F$ defining
$\J$, then we have the equality of generalized cycles  
$$
M^{\varphi}_{k} =\hat s_{k-\kappa}(N_\J X)\w [Z_\J], \quad k=\kappa, \ldots, n.
$$
\end{prop}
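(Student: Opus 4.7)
The plan is to compute $M_k^\varphi$ on the blow-up $\pi\colon X' \to X$ of $X$ along $\J$, where the exceptional divisor is the projective bundle $\P(N_\J X)$, and then apply Grothendieck's formula for Segre forms.

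Setup on the blow-up: since $\J$ defines a regular embedding, $X'$ is smooth along the exceptional divisor $D$, the restriction $p := \pi|_D$ realizes $D = \P(N_\J X)$ as a projective bundle over $Z$, and $\Ok_{X'}(D)|_D = \Ok_{\P(N_\J X)}(-1)$. The pulled-back section factors globally as $\pi^*\varphi = \sigma \cdot \varphi^0$, where $\sigma$ is a section of $\Ok_{X'}(D)$ cutting out $D$ and $\varphi^0$ is a nowhere-vanishing section of $\pi^*F \otimes \Ok_{X'}(-D)$. Via the canonical embedding $i_\varphi$, the restriction $\varphi^0|_D$ is precisely the tautological inclusion $\Ok_{\P(N_\J X)}(-1) \hookrightarrow p^*N_\J X \subset p^*F|_Z$.

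Applying the Poincar\'e-Lelong formula to $\log|\pi^*\varphi|^2 = \log|\sigma|^2 + \log|\varphi^0|^2$ yields $\pi^*(dd^c \log|\varphi|^2) = [D] + \beta$, where $\beta := dd^c \log|\varphi^0|^2 - c_1(\Ok_{X'}(D))$ is a smooth $(1,1)$-form on $X'$, independent of the auxiliary metric on $\Ok_{X'}(D)$. A computation in local blow-up coordinates (where $\varphi^0$ reduces to the tautological expression) shows that $\beta|_D$ equals $\xi$, the first Chern form of $\Ok_{\P(N_\J X)}(1)$ with the metric dual to the tautological one induced from the Hermitian structure on $N_\J X$. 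Using the recursive definition \eqref{recur} together with the compatibility of Monge-Amp\`ere products with proper modifications (from \cite{A2} and the theory of generalized cycles developed earlier in this paper), one then establishes the key identity $M_k^\varphi = \pi_*\bigl([D] \w \beta^{k-1}\bigr)$ for $k \ge \kappa$. Since $[D] \w \beta^{k-1}$ is the current of integration on $D$ against the smooth form $\xi^{k-1}$, the projection formula gives
$$\pi_*\bigl([D] \w \beta^{k-1}\bigr) = p_*\bigl(\xi^{k-1}\bigr) \w [Z_\J] = \hat s_{k-\kappa}(N_\J X) \w [Z_\J],$$
by Grothendieck's formula for the total Segre form of a Hermitian vector bundle of rank $\kappa$.

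The main obstacle is establishing $M_k^\varphi = \pi_*([D] \w \beta^{k-1})$. The naive expansion $([D] + \beta)^k$ on $X'$ involves undefined powers of the Lelong current $[D]$, so one must proceed via the approximation \eqref{44}: pull back $(dd^c\log(|\varphi|^2 + \epsilon))^k$ to $X'$, expand in terms of $\sigma$ and $\varphi^0$, and pass to the limit, tracking carefully which terms survive the $\1_Z$-projection. The upshot should be that exactly one factor of $[D]$ survives, paired with $\beta^{k-1}$; all other contributions either vanish off $Z$ or are of the wrong dimension.
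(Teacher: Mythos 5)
Your strategy coincides with the paper's: pass to $Bl_\J X$, factor $\pi^*\varphi$ into a section cutting out $D$ times a non-vanishing factor, use Poincar\'e--Lelong to write $\pi^*dd^c\log|\varphi|^2=[D]+\beta$ with $\beta=-\hat c_1(L_D)$ for the metric on $L_D$ induced via the non-vanishing factor, identify $\beta|_D$ with the tautological Chern form on $\P(N_\J X)$, and push forward using the definition \eqref{orgie} of the Segre form. One remark on what you call ``the main obstacle'': the identity $M_k^\varphi=\pi_*([D]\w\beta^{k-1})$ is not an obstacle at all --- it is exactly \eqref{trott} in the proof of Theorem~\ref{putig} (with $W=Bl_\J X$ and $\alpha=1$), already established there for arbitrary sections via the $\epsilon$-regularization you describe, so you can simply cite it.

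There is, however, a genuine gap at the end. Your final display asserts $\pi_*([D]\w\beta^{k-1})=p_*(\xi^{k-1})\w[Z_\J]$, treating $[D]$ as ``the current of integration on $D=\P(N_\J X)$''. But $[D]$ is the Lelong current of the exceptional \emph{divisor} and carries multiplicities: $[D]=\sum_\ell a_\ell[D_\ell]$ with $D_\ell=p^{-1}Z_\ell$ and $a_\ell$ the order of vanishing of $\pi^*\J$ along $D_\ell$ (already for $\J=(x^2)$ in $\C$ the blow-up is trivial and $[D]=2[0]$). Fiber integration therefore yields $\hat s_{k-\kappa}(N_\J X)\w\sum_\ell a_\ell[Z_\ell]$, and it remains to prove that $\sum_\ell a_\ell[Z_\ell]$ is the fundamental cycle $[Z_\J]$ of the possibly non-reduced space $Z_\J$. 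This is where the paper does real work: it observes that $\sum_\ell a_\ell[Z_\ell]=M_\kappa^\varphi=M_\kappa^{s}$ for a local minimal generating tuple $s=(s_1,\ldots,s_\kappa)$ with trivial metric, identifies $M_\kappa^{s}$ with the proper intersection $[\div s_1]\w\cdots\w[\div s_\kappa]$ by \cite[Ch.~3.16, Thm~3]{Ch}, and then invokes \cite[Ch.~7]{Fult} to conclude that this product is $[Z_\J]$ for a regular embedding. Without this step you only obtain the formula with $[Z_\J]$ replaced by an unidentified cycle supported on $Z$. A minor further caveat: $Bl_\J X$ need not be smooth along $D$ when $X$ is singular (for $\kappa=1$ the blow-up is $X$ itself), so the verification that $\beta|_D=\xi$ should be carried out intrinsically, via the embeddings $L_D\hookrightarrow\Ok_F(-1)$ and $\Ok_{N_\J X}(-1)\hookrightarrow\Ok_F(-1)$ as in the paper's diagram \eqref{diagram2}, rather than ``in local blow-up coordinates''.
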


We have  a mapping
\begin{equation}\label{bgysin}
\B_k(X)\to \B_{k-\kappa}(Z),\quad 
\mu\mapsto \big(c(N_\J X)\w S(\J,\mu) \big)_{k-\kappa},
\end{equation}
where lower index denotes dimension,
and $c(N_\J X)=1/s(N_\J X)$ is the total Chern class of $N_\J X$.
If we choose a section $\varphi$ of $F$ as above we get a representing mapping 
\begin{equation}\label{jupiter}
\GZ_k(X)\to\GZ_{k-\kappa}(Z),\quad
\hat\mu\mapsto \big(\hat c(N_\J X)\w M^\varphi\w\hat \mu \big)_{k-\kappa},
\end{equation}
where $\hat c(N_\J X)$ is the associated total Chern form. 
The mapping \eqref{bgysin} is a  $\B$-analogue of the Gysin
mapping,   \cite[Proposition~6.1]{Fult}, see Section~\ref{prel} for the notation,
\begin{equation}\label{gysin}
\A_k(X)\to \A_{k-\kappa}(Z),\quad 
\mu\mapsto \big(c(N_\J X) \cap s(\J, \mu)\big)_{k-\kappa}.
\end{equation}

\smallskip
In Section~\ref{faksimil} we introduce a quotient space
$\widehat H^{\ell,\ell}(X)$ of closed $(\ell,\ell)$-currents with support on $X$,
coinciding with the usual de~Rham cohomology in case $X$  is smooth.
There are natural mappings $\A_k(X)\to \widehat H^{n-k,n-k}(X)$ 
and $\B_k(X)\to \widehat H^{n-k,n-k}(X)$. 

\begin{prop}\label{ab1}  
For each $k$, the  images of $\A_k(X)$ and $\B_k(X)$  in $ \widehat H^{n-k,n-k}(X)$
coincide. 
\end{prop}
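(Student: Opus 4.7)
The plan is to establish both inclusions between the two images. The direction ``image of $\A_k$'' $\subseteq$ ``image of $\B_k$'' is essentially formal: any algebraic cycle of dimension $k$ is represented in $\widehat H^{n-k,n-k}(X)$ by its Lelong current, and this same Lelong current lies in $\Zy_k(X) \subset \B_k(X)$. Hence the cycle class map $\A_k(X) \to \widehat H^{n-k,n-k}(X)$ factors through $\B_k(X)$, giving the first inclusion.

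For the reverse direction, I would exploit the structural description of $\GZ(X)$ recalled in the introduction. Every element of $\GZ_k(X)$ is, locally on $X$, a finite $\Z$-linear combination of terms
\[
\tau = \pi_*\bigl(\hat c_{i_1}(E_1)\wedge\cdots\wedge\hat c_{i_m}(E_m)\wedge[W]\bigr),
\]
where $\pi\colon X'\to X$ is proper, $W\subset X'$ is an irreducible subvariety, and $E_1,\dots,E_m$ are Hermitian vector bundles on $X'$. Since both $\A_\bullet$ and $\widehat H$ are covariant under proper pushforward, it suffices to produce, for each such $\tau$, an algebraic cycle on $X'$ with the same class in $\widehat H$ on $X'$; applying $\pi_*$ then yields the required element of $\A_k(X)$.

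The natural candidate is any algebraic representative $\xi \in \A_k(X')$ of the Chow-theoretic intersection $c_{i_1}(E_1)\cdots c_{i_m}(E_m)\cap[W]$, whose existence is guaranteed by the standard construction of Chern classes in the Chow group via the splitting principle or Grothendieck's projective bundle formula. Comparing $\tau$ with the Lelong current of $\xi$ reduces, by induction on $m$, to the single step $m=1$: for one Hermitian bundle $E\to X'$ and one irreducible subvariety $W$, one must show that $\hat c_j(E)\wedge[W]$ and any algebraic representative of $c_j(E)\cap[W]$ have the same class in $\widehat H$. I expect this to be the main obstacle. On smooth $W$ it follows from the classical identification of algebraic and de Rham Chern classes, up to smooth $\ddbar$-exact forms which vanish in $\widehat H$; on singular $W$ one passes to a resolution and invokes functoriality of $\widehat H$, whose relevant properties are presumably established when $\widehat H$ is introduced in Section~\ref{faksimil}. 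Once this single-bundle comparison is in place, iteration and linearity give the desired inclusion.
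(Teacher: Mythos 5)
Your plan follows the paper's proof quite closely in outline, but two points deserve comment. First, the easy inclusion: your justification that ``the cycle class map $\A_k(X)\to\widehat H^{n-k,n-k}(X)$ factors through $\B_k(X)$'' is literally false --- the paper shows at the end of this very section that rationally equivalent points need not be $\B$-equivalent, so there is no induced map $\A_k(X)\to\B_k(X)$. What is true, and all you need, is that $A_X$ and $B_X$ agree on $\Zy_k(X)$ (this is \eqref{pumpa}), and since $\A_k(X)$ is a quotient of $\Zy_k(X)$ the image of $A_X$ is contained in the image of $B_X$. The conclusion stands; the stated reason does not.

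Second, and more substantively, the step you yourself flag as ``the main obstacle'' --- that $\hat c_j(E)\wedge A_{X'}\nu$ and $A_{X'}(c_j(E)\cap\nu)$ agree in $\widehat H$ --- is asserted rather than proved, and it is precisely the content of the paper's identity \eqref{bigin}, which the paper establishes separately before deducing the proposition. The paper's proof of \eqref{bigin} is by induction on $\rank E$: the rank-one case reduces, for an irreducible $V$, to choosing a meromorphic section $h$ of $L$ nontrivial on $V$ and invoking the Poincar\'e--Lelong formula, which exhibits $[\div h]\wedge[V]-\hat c_1(L)\wedge[V]$ as $dd^c$ of an order-zero current; the inductive step uses the tautological sequence $0\to\Ok(-1)\to p^*E\to Q\to 0$ on $\P(E)$ together with the Whitney formula on both the Chow side and the $\widehat H$ side. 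Your proposed substitute --- the classical comparison of algebraic and de Rham Chern classes on a smooth variety, plus a resolution for singular $W$ --- is a reasonable alternative, but note that in the paper's definition the source $W$ of a generalized cycle is already smooth (so no resolution is needed there), while $W$ is only known to be a compact complex manifold proper over $X$, not necessarily projective; the classical comparison you cite is usually stated in the projective setting, and the existence of suitable meromorphic sections on $W$ is exactly what one must secure. This is why the paper reruns the splitting-principle argument by hand from the Poincar\'e--Lelong formula rather than quoting the classical statement. With \eqref{bigin} in hand, your reduction via functoriality of $A$, $B$ and $\widehat H$ under proper pushforward is the same computation the paper performs.
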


\begin{prop}\label{ab0}
Assume that $\J\to X$ defines a regular embedding $Z_\J\hookrightarrow X$
of codimension $\kappa$ and let $Z$ be the (reduced) zero set of $\J$.  
If $\mu$ is a cycle on $X$, then the images in 
$\widehat H^{*,*}(Z)$ of the Gysin and the $\B$-Gysin mappings, \eqref{gysin} and \eqref{bgysin}, respectively,
of $\mu$ coincide.
\end{prop}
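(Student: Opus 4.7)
\emph{Plan.} By Proposition~\ref{ab1}, both Gysin classes land in a common quotient $\widehat H^{*,*}(Z)$; the task therefore reduces to showing that the cohomology images of $s(\J,\mu)\in\A_*(Z)$ and $S(\J,\mu)\in\B_*(Z)$ coincide, since the total Chern class $c(N_\J X)$ is a well-defined element of $\widehat H^{*,*}(Z)$ and acts by cup product identically on the two sides. By bilinearity I may take $\mu$ to be an irreducible subvariety with inclusion $i\colon\mu\hookrightarrow X$. Because $S(\J,\mu)$ is independent of the modification used in \eqref{segredef}, and Fulton's Segre class $s(\J,\mu)$ is also independent of the modification computing it, I may fix a single \emph{smooth} modification $\pi\colon\tilde\mu\to\mu$ dominating the blow-up of $\mu$ along $i^*\J$, with exceptional Cartier divisor $D$ and associated line bundle $L=\O(D)$ carrying some Hermitian metric.

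On the smooth space $\tilde\mu$, Fulton's definition \cite{Fult} and \eqref{segredef} read
\begin{equation*}
s(\J,\mu)=\sum_{j\ge1}(-1)^{j-1}\pi_*\bigl(c_1(L)^{j-1}\cap [D]\bigr),\qquad
S(\J,\mu)=\sum_{j\ge1}(-1)^{j-1}\pi_*\bigl([D]\w c_1(L)^{j-1}\bigr),
\end{equation*}
differing only in whether one uses the Chow self-intersection or the wedge of the Lelong current with the smooth Chern form. The Poincar\'e-Lelong identity $[D]-c_1(L,h)=dd^c\log\|s\|^2$ (for any section $s$ of $L$ cutting out $D$) shows that $[D]\w c_1(L)^{j-1}$ and $c_1(L)^j$ represent the same class in $\widehat H^{j,j}(\tilde\mu)$; by the standard compatibility of the cycle class map with intersection products on the smooth space $\tilde\mu$, this same class is also the image of $c_1(L)^{j-1}\cap [D]$. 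Functoriality of proper push-forward with respect to the cycle/current-class map then yields the equality of the images of $s(\J,\mu)$ and $S(\J,\mu)$ in $\widehat H^{*,*}(Z)$.

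Cupping with $c(N_\J X)\in\widehat H^{*,*}(Z)$ and extracting the component of dimension $k-\kappa$ preserves this equality, so the images of \eqref{gysin} and \eqref{bgysin} in $\widehat H^{*,*}(Z)$ coincide, which is the claim. The main obstacle is the compatibility assertion of the middle paragraph: one has to match simultaneously wedge products of currents with smooth forms, Chow-theoretic intersection products, and cup products in $\widehat H^{*,*}$ under the cycle/current-class map. Working on the smooth resolution $\tilde\mu$ isolates the content to the classical Poincar\'e-Lelong formula and the standard compatibility of the Chow class map with intersections on a smooth space, with functoriality of push-forward transporting the conclusion back to $Z$.
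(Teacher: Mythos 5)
Your strategy is the same as the paper's: reduce to an irreducible subvariety $V$, principalize $i^*\J$ on a modification with exceptional divisor $D$ and line bundle $L$, identify the Chow-theoretic and current-theoretic Segre expressions via Poincar\'e--Lelong, push forward, and finally cup with $c(N_\J X)$. But as written there are two concrete gaps. The first is a support problem: you compare both $c_1(L)^{j-1}\cap[D]$ and $[D]\wedge\hat c_1(L)^{j-1}$ to the globally defined smooth form $\hat c_1(L)^{j}$, which is not supported on $|D|$. The equality you obtain therefore lives in $\widehat H^{*,*}(\tilde\mu)$, and after push-forward only in $\widehat H^{*,*}(X)$, whereas the proposition asserts the coincidence in $\widehat H^{*,*}(Z)$ --- and the natural map $\widehat H^{*,*}(Z)\to\widehat H^{*,*}(X)$ has no reason to be injective. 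The repair is to compare the two expressions directly on $|D|$: by Proposition~\ref{myrstack} the difference $[\div h]\wedge\nu-\hat c_1(L)\wedge\nu$ is $dd^c$ of an order-zero current supported on $|\nu|$, so iterating keeps the identification inside $\widehat H^{*,*}(|D|)$ and hence, after $\pi_*$, inside $\widehat H^{*,*}(Z)$. This is exactly how the paper organizes its chain of equalities, working with $A_{|D|}$ and $B_{|D|}$ throughout rather than with classes on $\tilde\mu$.

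The second gap is your opening reduction, namely that $c(N_\J X)$ ``acts by cup product identically on the two sides.'' On the $\B$-side this is essentially the definition (cf.\ \eqref{puttef}), but on the $\A$-side it is precisely the identity \eqref{bigin}, $A_Z\big(c(E)\cap\alpha\big)=c(E)\wedge A_Z\alpha$, applied to the rank-$\kappa$ bundle $N_\J X$ over the possibly singular $Z$. There $c(N_\J X)\cap$ is Fulton's operation, built from projectivizations and proper intersections with divisors of meromorphic sections, and it is not formal that $A_Z$ intertwines it with wedging by a Chern form; the paper proves this by induction on the rank via the tautological sequence on $\P(E)$. The rank-one instance is the Poincar\'e--Lelong argument you do give, but the higher-rank case is the real content and cannot be dismissed as standard in this singular, current-theoretic setting --- you should either invoke \eqref{bigin} explicitly or reproduce its induction. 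A minor further omission: the degenerate case in which $\J$ vanishes identically on $V$, where no principalizing modification with a nonempty exceptional divisor exists, must be handled separately (both Gysin maps then send $\mu$ to $\mu$).
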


In Section~\ref{vogelsec} we consider a general ideal sheaf $\J\to X$ that is generated by 
a tuple $\fff=(\fff_0,\ldots, \fff_m)$ of global sections
 of a  
line bundle $L\to X$.  In this situation St\"uckrad-Vogel, \cite{SV}, introduced an algorithm to produce concrete
cycles, St\"uckrad-Vogel  cycles, that determine a  Chow class $v(\J,L,X)$, which is related to  $s(\J,X)$ via
van~Gastel's formulas, \cite{gast}.  Given a Hermitian metric on $L$ we define a global  
generalized cycle $M^{L,\fff}$ by taking a certain mean
value of  St\"uckrad-Vogel cycles.  If we consider $\fff$ as a section of
$E=\oplus_0^m L$  we have an analogue of van~Gastel's
formulas relating $M^{L,\fff}$ and $M^{\fff}$ as elements in $\GZ(X)$.

\section{Preliminaries}\label{prel}

Locally there is an embedding
$i\colon X\to \Omega\subset\C^N$ into an open subset $\Omega\subset\C^N$. 
The sheaf  $\E^{n-\ell,n-k}_X$ of smooth  $(n-\ell,n-k)$-forms on $X$ is by definition
the quotient sheaf
$\E^{n-\ell,n-k}_{\Omega}/\Kers i^*$, where $\Kers i^*$ is the sheaf of forms $\xi$ on $\Omega$ such that
$i^*\xi$ vanish on  $X_{\text{reg}}$. Since all embeddings are essentially equivalent, this definition
is independent of the choice of embedding. 
The sheaf
$\Cu_X^{\ell,k}$ of  currents of bidegree $(\ell,k)$ on $X$ is by definition the dual
of  $\E^{n-\ell,n-k}_X$.  Given the embedding $X\to \Omega$, currents $\mu$ in
$\Cu^{\ell,k}_X$ can be identified with
currents $\mu'=i_*\mu$ on $\Omega$ of bidegree $(N-n+\ell,N-n+k)$  that
vanish on $\Kers i^*$.
We say that $\mu$ has order zero if $i_*\mu$ has order zero; recall that this means that $i_*\mu$
has measure coefficients.
A current $\mu$ in $\Cu^{n-d, n-d}_X$ is said to have {\it (complex) dimension} $d$.
If $f\colon X\to X'$
is  proper,  then $f^*$ is well-defined on smooth forms and $f_*$ is well-defined on currents
and preserves dimension, see \cite{ASK}.
If $\mu$ is a current on $X$ and $\eta$ is a smooth form on $X'$, then 
\begin{equation}\label{ingen}
\eta\wedge f_*\mu = f_*(f^*\eta\wedge \mu). 
\end{equation} 
Moreover, if $\mu$ has order zero then so has  $f_*\mu$ 
and  
\begin{equation}\label{linalg}
\1_V f_*\mu=f_*(\1_{f^{-1} V} \mu),
\end{equation}
where $\1_V$ is the characteristic function of the analytic subset $V$. 
If $\mu$ is a closed positive current then so is $f_*\mu$. 
The \emph{Lelong number} $\ell_x\mu$ of $\mu$ at
$x$ is defined as the Lelong number
of $i_*\mu$ at $i(x)$ where $i$ is a local embedding in a smooth
manifold, see, e.g.,  \cite[Section~2.2]{aswy}.
If $V$ is a subvariety of $X$ of pure dimension $d\ge 0$, then
there is an associated  closed positive current of
dimension $d$, the {\it Lelong current},
$$
\phi\mapsto [V].\phi=\int_{V_{\rm reg}}\phi.
$$

\smallskip

Recall that to any Hermitian line bundle
$L\to X$ there is an
associated (total) Chern form $\hat c(L)=1+\hat c_1(L)$.
If $L'$ is the same line bundle but with another Hermitian metric, then
there is smooth function $\xi$ on $X$ such that
\begin{equation}\label{spjut}
\hat c_1(L')-\hat c_1(L)=dd^c\xi.
\end{equation}
Assume that $E\to X$ is a Hermitian vector bundle of rank $r$, and let
$\pi\colon \P(E)\to X$ be the projectivization of $E$, by which we mean the 
projective bundle of lines through the origin in $E$.
 Let $L=\Ok(-1)\subset \pi^*E$ be the tautological
line bundle equipped with the induced Hermitian metric, and let $\hat c(L)$ be its 
Chern form. The (total) {\it Segre form} of $E$ is  defined as
\begin{equation}\label{orgie}
\hat{s}(E)=\pi_*(1/\hat c(L)).
\end{equation}
Thus\footnote{It is not obvious  that $\hat s_0(E)=1$; however it follows from the corresponding
statement for the Chow class, see \cite{Fult},  or from \eqref{mor} below.} 
$\hat{s}(E)=1+\hat s_1(E)+\hat s_2(E)+\cdots $
where 
\begin{equation}\label{skot}
\hat s_\ell(E)=(-1)^{\ell+r-1}\pi_* \hat c_1(L)^{\ell+r-1}
\end{equation}
is the component of bidegree $(\ell,\ell)$.
It is indeed is a smooth form on $X$:  if $X$ is smooth this follows since 
$\pi$ is a submersion and in general it follows by embedding
$X$ locally  in a smooth 
space and extending $E$ to a Hermitian bundle over this space.

Let $X'$ be another analytic space and $f \colon X'\to X$
a proper mapping. Then the tautological line bundle 
$L'\to\P(f^*E)$ 
associated with
$\P(f^*E)\to X'$ is the pullback  of $L\to \P(E)$ under the induced map $\tilde{f}\colon \P(f^*E)\to \P(E)$
and so $\hat c_1(\tilde{f}^*L)=\tilde{f}^*\hat c_1(L)$.
It follows that 
 \begin{equation}\label{orgie1}
\hat s_k( f^*E)= f^*\hat s_k(E).
\end{equation}
If $E$ is a line bundle, then $\P(E)=X$, $L=E$, and hence
\begin{equation}\label{smal}
\hat c(E)=1/\hat s(E).
\end{equation}
For a general Hermitian vector bundle $E\to X$ we take \eqref{smal} as the definition
of its (total) Chern form.  Thus $\hat c(E)=1+\hat c_1(E)+\hat c_2(E)+\cdots$ where the component $\hat c_k(E)$
of bidegree $(k,k)$ is a polynomial in the $\hat s_\ell(E)$.   
From \eqref{orgie1} we get
\begin{equation}\label{orgie2}
\hat c_k(f^*E)=f^*\hat c_k(E).
\end{equation}

Let $E$ and $E'$ be the same bundle but with two different Hermitian metrics and let
$L$ and $L'$ be the associated Hermitian line bundles over $\P(E)$.
In view of \eqref{spjut},  \eqref{skot} and \eqref{smal}   (and that $\pi$ is a submersion)
we have, for $k\ge 1$, that
\begin{equation}\label{prom}
 \hat s_k(E')-\hat s_k(E)=dd^c\omega_s,  \quad \hat c_k(E')-\hat c_k(E)=dd^c\omega_c,
\end{equation}
for suitable smooth $(k-1,k-1)$-forms $\omega_s,\omega_c$ on $X$.
We let $s_k(E)$ and $c_k(E)$ denote the cohomology classes, which we for simplicity refer to as
the Segre and Chern classes,  although we only consider representatives obtained
from a Hermitian metric as above.

The Hermitian metric on $E$ determines
a Chern connection and thus a curvature tensor $\Theta_E$. 
It is proved in \cite[Proposition~6]{Mor} that the definition used here and the differential-geometric
definition of Chern form coincide, that is, 
\begin{equation}\label{mor}
\hat c(E)=\det(I+(i/2\pi)\Theta_E).
\end{equation}

An analytic $k$-cycle $\mu$ on $X$ is a formal locally\footnote{Algebraic geometry only deals with finite
linear combinations, but we use the more ``analytic'' definition.} 
finite linear combination $\sum
a_j V_j$, where $a_j\in \Z$ and  $V_j\subset X$ are irreducible
analytic sets of dimension $k$. We let 
\[
[\mu]:=\sum a_j [V_j]
\]
be its associated Lelong current.
Note that if $V_j$ has dimension $n$ (the dimension of $X$),
then $[V_j]=\1_{V_j}$. 
We will denote the $\Z$-module of analytic $k$-cycles on $X$ by
$\Zy_k(X)$. 
The \emph{support} $|\mu|$ of the cycle $\mu$ is defined as the union of the $V_j$
for which $a_j\neq 0$ and it coincides with the support of the current $[\mu]$. 
Recall that 
\begin{equation}\label{imam}
\mult_x \mu=\ell_x [\mu], 
\end{equation}
where $\ell_x\gamma$ denotes the \emph{Lelong number} of the closed positive current $\gamma$ at
$x$, and $\mult_x\mu$ is the \emph{multiplicity} of $\mu$ at
$x$ (defined as in \cite[Ch.~2.11.1]{Ch}), see, e.g.,
\cite[3.15, Proposition~2]{Ch}.

Let $f\colon X'\to X$ be a proper mapping. 
For each irreducible subvariety $V\subset X'$, let $\deg f_V$
denote the degree of $f|_V\colon V\to f(V)$; if $\dim f(V)< \dim V$ it is
defined as zero. 
The \emph{push-forward} of $\mu\in\Zy_k(X')$ is the cycle
\begin{equation}\label{justnu}
f_*\mu =  \sum a_j \deg f_{V_j}  f(V_j), 
\end{equation}
in $\Zy_k(X)$, see, e.g., 
\cite[Section~1.4]{Fult}.
Since $f_*[V]=\deg f_V[f(V)]$ 
it  follows that 
\begin{equation}\label{rapport}
f_*[\mu]=[f_*\mu].
\end{equation}
In particular, if $i\colon X\to Y$ is an embedding in another reduced space $Y$,
then $\mu\in \mathcal{Z}_k(X)$ can be regarded as
a cycle on $Y$ and $i_*[\mu]=[\mu]$.
For the rest of this paper we often skip the notation $[\mu]$ and identify a cycle with its Lelong current.
\smallskip

Let  
$d^c=(\partial-\dbar)/4i\pi$
so that\footnote{We write $[0]$ 
rather than  $[\{0\}]$ for the point mass 
at $0$.} $dd^c\log|z|^2=[0]$
 in $\C$.
The Poincar\'e-Lelong formula, usually stated on a smooth manifold, 
has an extension to our nonsmooth case (see also Section~\ref{plvariants}).
We say that a meromorphic section of a line bundle is {\it non-trivial} if it is 
generically holomorphic and non-vanishing.

\begin{prop}[The Poincar\'e-Lelong formula]\label{plf}
Let $h$ be a non-trivial meromorphic 
section of a Hermitian line bundle $L\to X$.
Then $\log|h|^2$ has order zero on $X$,  
\begin{equation}\label{orgie3}
dd^c\log|h|^2=\lim_{\epsilon\to 0} dd^c\log(|h|^2+\epsilon)
\end{equation}
where $h$ is holomorphic, 
and there is a cycle $\div h$ such that
\begin{equation}\label{pl}
dd^c\log|h|^2=[\div h]-\hat c_1(L).
\end{equation}
\end{prop}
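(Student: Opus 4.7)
The plan is to derive the nonsmooth statement from the classical Poincar\'e-Lelong formula via a resolution of singularities followed by pushforward. Everything is local on $X$, so I would fix a point $x_0\in X$, trivialize $L$ on a neighborhood so that $h$ becomes a non-trivial meromorphic function, and take a proper modification $\pi\colon\wt X\to X$ with $\wt X$ smooth (e.g.\ a Hironaka desingularization). Since $\pi^* h$ is then a non-trivial meromorphic section of $\pi^* L$ on a smooth space, the classical Poincar\'e-Lelong formula applies upstairs: $\log|\pi^* h|^2$ is $L^1_\loc$ (hence of order zero on $\wt X$), one has $dd^c\log(|\pi^* h|^2+\epsilon)\to dd^c\log|\pi^* h|^2$ as $\epsilon\to 0$ wherever $\pi^* h$ is holomorphic, and
\[
dd^c\log|\pi^* h|^2=[\div\pi^* h]-\hat c_1(\pi^* L).
\]

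I would then apply $\pi_*$ to this identity. Pushforward preserves order-zero currents, commutes with $dd^c$, and by \eqref{rapport} sends the Lelong current of a cycle to the Lelong current of its pushforward, so one may \emph{define} $\div h:=\pi_*\div\pi^* h\in\Zy_{n-1}(X)$. Since $\pi$ is a modification we have $\pi_*1=1$, and the projection formula \eqref{ingen} together with $\pi^*\hat c_1(L)=\hat c_1(\pi^* L)$ yields $\pi_*\hat c_1(\pi^* L)=\hat c_1(L)$. Thus, modulo identifying $\pi_*\log|\pi^* h|^2$ with $\log|h|^2$ (and similarly for the $\epsilon$-regularization), the three desired conclusions on $X$ follow by pushing the three identities above down.

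The identification of pushforwards is the only technical step. By the paper's definition a test form on $X$ is a smooth form on a local embedding $X\hookrightarrow\Omega\subset\C^N$ modulo forms vanishing on $X_{\rm reg}$, so its pairing with a current of order zero is determined by its restriction to $X_{\rm reg}$. Since $\pi$ is a biholomorphism over a Zariski open dense subset of $X_{\rm reg}$, the standard change-of-variables gives
\[
\la\pi_*\log|\pi^* h|^2,\phi\ra=\int_{\wt X}\log|\pi^* h|^2\,\pi^*\phi=\int_{X_{\rm reg}}\log|h|^2\,\phi,
\]
which simultaneously exhibits $\log|h|^2$ as a well-defined current of order zero on $X$ and identifies it with $\pi_*\log|\pi^* h|^2$. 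The same substitution applied to $\log(|\pi^* h|^2+\epsilon)$ (which where $h$ is holomorphic extends continuously to $X$) transports the smooth limit \eqref{orgie3} upstairs to the claimed limit on $X$.

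The main obstacle is precisely this bookkeeping around the nonsmooth definition of currents on $X$, plus the verification that $\div h$ so defined does not depend on the choice of modification; the latter reduces, via the standard fact that any two modifications are dominated by a third, to the agreement of both candidates on the common Zariski open dense subset of $X_{\rm reg}$, where everything is classical.
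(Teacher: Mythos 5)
Your proposal is correct and follows essentially the same route as the paper: pull back to a smooth modification, invoke the classical Poincar\'e--Lelong formula there, and push forward, identifying $\log|h|^2$ with $\pi_*\log|\pi^*h|^2$ via the generic biholomorphism and obtaining $[\div h]=\pi_*[\div\pi^*h]$, a cycle by \eqref{rapport}. The only cosmetic difference is that the paper reads off independence of the modification directly from \eqref{pl} (since $[\div h]=dd^c\log|h|^2+\hat c_1(L)$ is determined downstairs), whereas you dominate two modifications by a third; both work.
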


In case $X$ is smooth, $\div h$ is the usual divisor
defined by $h$.

\begin{proof} 
Let $\pi\colon X'\to X$ be a smooth modification. Since $\pi^*h$ is non-trivial on $X'$, 
$\log|\pi^* h|^2$ is locally integrable and hence a current of order $0$.
Since $\pi$ is a biholomorphism generically,
$\log|h|^2=\pi_*\log|\pi^*h|^2$.  Thus $\log|h|^2$ has order zero. 
For the same reason the limit \eqref{orgie3} holds where $h$ is holomorphic,
and   $\pi_*\hat c_1(\pi^*L)=\hat c_1(L)$.
By the Poincar\'e-Lelong formula on a smooth manifold,  
$
dd^c\log|\pi^*h|^2=[\div \pi^*h]-\hat c_1(\pi^*L).
$
Applying $\pi_*$ we get \eqref{pl} with $[\div h]=\pi_*[\div\pi^* h]$.
It follows from \eqref{rapport} that  $\div h$ is a cycle, and it follows from
\eqref{pl} that it is independent of the choice of modification. 
\end{proof}

Let  
$i\colon V\hookrightarrow X$ be a subvariety.  If $i^*h$ is non-trivial, 
then we say that $\div h$ {\it intersects $V$ properly}, and 
we have  the  {\it proper intersection} 
$[\div h]\w [V]:=i_* (\div i^*h)$,
cf.~\cite[Ch~2, 12.3]{Ch} and Section~\ref{plvariants} below.  
Letting  $\log|h|^2[V]:=i_*\log|i^*h|^2$ and noting that 
$\hat c_1(L)\w[V]=i_* \hat c_1(i^*L)$, 
we get from  \eqref{pl} the formula
\begin{equation}\label{pl2}
dd^c(\log|h|^2[V])=[\div h]\w[V]-\hat c_1(L)\w[V].
\end{equation}

Recall that $\mu\in \mathcal{Z}_k(X)$ is \emph{rationally equivalent} to $0$,
$\mu\sim 0$, if
there are  subvarieties
$i_j:W_j\hookrightarrow X$ of dimension $k+1$ and meromorphic 
non-trivial functions $g_j$ on $W_j$, 
such that, writing $g_j$ rather than $i_j^*g_j$ for simplicity, 
\begin{equation}\label{chow}
\mu=\sum_j (i_j)_* [\div g_j]=\sum_j (i_j)_*dd^c\log|g_j|^2=
\sum_j dd^c(\log|g_j|^2[W_j]),
\end{equation}
cf.~\eqref{pl2}, where the sums are locally finite. 
We denote the \emph{Chow group} of cycles $\mathcal{Z}_k(X)$ modulo rational
equivalence by $\A_k(X)$, cf.\  \cite[Chapter~1.3]{Fult}.
Note that if $X$ is irreducible and compact and $\mu$ is a Cartier divisor, then
$\mu\sim 0$ 
precisely if
$\mu=[\div g]= dd^c\log|g|^2$ for some meromorphic function $g$
on $X$, i.e., the line bundle
$\Ok(\mu)$ defined by $\mu$ is trivial.
Thus for Cartier divisors (when $X$ is compact), rational equivalence  precisely means linear equivalence.
If $f\colon X'\to X$ is a proper mapping and $\mu\sim 0$ in $\Zy_k(X')$, then
$f_*\mu\sim 0$ and thus \eqref{justnu} induces a mapping, cf.\ \cite[Theorem~1.4]{Fult},
\begin{equation}\label{kenny}
f_*\colon \A_k(X')\to \A_k(X).
\end{equation}

\smallskip
Each component $c_k(E)$ of a Chern class on $X$ induces a mapping
$\A_*(X)\to \A_{*-k}(X)$, $\mu\mapsto c_k(E)\cap \mu$,
see, \cite[Section~3.2]{Fult}.  If $h$ is a nontrivial
meromorphic section on $|\mu|$ 
of a line bundle $L$, then $c_1(L)\cap \mu$ is the class in $\A(|\mu|)$ defined
by $[\div h]\w\mu$.

\section{Generalized cycles}\label{gencykler} 
The generalized cycles is the smallest class of currents that is closed under proper direct images and 
contains sums of wedge products of Lelong currents
and components of Chern forms. More formally,
we say that a current $\mu$ in $X$ is
a {\it generalized cycle} if it is a locally finite 
linear combination over $\Z$ 
of currents of the form
$
\tau_* \alpha,
$
where $\tau\colon W\to X$ is a proper map, $W$ is smooth, and
$\alpha$ is a product of  components of  Chern forms for various Hermitian
vector bundles $E_j$ over $W$, 
i.e.,  
\begin{equation}\label{sedan}
\alpha=\hat c_{k_1}(E_1)\wedge \cdots \wedge \hat c_{k_r}(E_r).
\end{equation} 
We will keep this notation throughout this section.
Since we can restrict $\tau$ to each connected component of $W$ we can assume that 
$W$ is connected.

Note that a generalized cycle is a real current of order zero that is
closed (in particular it is normal) with
components of bidegree $(*,*)$. 
We let $\GZ_k(X)$ denote the $\Z$-module of such currents of {\it (complex) dimension} $k$,
i.e., of bidegree $(n-k,n-k)$, and let $\GZ(X)=\bigoplus
\GZ_k(X)$. 
If $\mu\in\GZ(X)$ and $\gamma$ is a component of a Chern form on $X$, then
$\gamma\w\mu\in\GZ(X)$. In fact, if $\mu=\tau_*\alpha$, where $\tau\colon W\to X$, 
then $\gamma\w\mu =\tau_*(\tau^*\gamma\w\alpha)$, cf.\ \eqref{ingen}.

\begin{remark}\label{kola}
In view of \eqref{smal}  each form \eqref{sedan} is a finite sum of similar forms but with
$\hat c$ replaced by $\hat s$.  Morover, we can assume that each factor in 
\eqref{sedan} is the first Chern form of a Hermitian line bundle. 
To see this it is enough to verify that any
$\alpha=\hat s_{k_1}(E_1)\wedge \cdots \wedge \hat s_{k_t}(E_t)$, where $E_j\to W$ are Hermitian vector bundles of rank $r_j$, 
is of this form. Let $\pi: W'\to W$ be the fiber product 
$
W'=\P(E_1)\times_W\cdots \times_W \P(E_t), 
$
let $L_j$ be the pullback to $W'$ of the tautological bundle
$\Ok(-1)\to \P(E_j)$, and let $\hat c_1(L_j)$ be the first Chern form
on $L_j$ induced by the metric on $E_j$. Then, cf.\ \eqref{skot}, 
\[
\alpha=\pm\pi_*\big ( \hat c_1(L_1)^{k_1+r_1-1}\wedge \cdots \wedge
\hat c_1(L_t)^{k_t+r_t-1}\big ).
\]
\end{remark}

\begin{lma}\label{rest}
Let $i\colon V\hookrightarrow X$ be a subvariety and $\mu\in\GZ(X)$. 

\smallskip
\noindent (i)\  Then $\1_V\mu\in\GZ(X)$. 
\smallskip

\smallskip
\noindent (ii)\
If 
\begin{equation}\label{husmus}
\mu=\sum_k(\tau_k)_* \alpha_k,
\end{equation}
where $\tau_k:W_k\to X$ are proper, $W_k$ are smooth and connected, and $\alpha_k$ are as in
\eqref{sedan}, 
then
\begin{equation}\label{bakom2}
\1_V \mu =\sum_{\tau_k(W_k)\subset V}
(\tau_k)_*\alpha_k.
\end{equation}
\end{lma}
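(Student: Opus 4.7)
The plan is to prove (ii) first and deduce (i) as an immediate corollary, since each summand on the right of \eqref{bakom2} is manifestly of the form $(\tau_k)_*\alpha_k$ and hence lies in $\GZ(X)$ by definition.

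Given the decomposition \eqref{husmus}, I would apply \eqref{linalg} termwise to obtain
\[
\1_V\mu = \sum_k \1_V (\tau_k)_*\alpha_k = \sum_k (\tau_k)_*\bigl(\1_{\tau_k^{-1}(V)}\alpha_k\bigr).
\]
This is legitimate because each $(\tau_k)_*\alpha_k$ has order zero ($\alpha_k$ being smooth and $\tau_k$ proper), and the sum is locally finite by hypothesis.

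Next I would split the sum according to whether $\tau_k(W_k)\subset V$. Since $W_k$ is smooth and connected, it is irreducible, so the analytic set $\tau_k^{-1}(V)\subset W_k$ is either all of $W_k$ (precisely when $\tau_k(W_k)\subset V$) or a proper analytic subset. In the former case $\1_{\tau_k^{-1}(V)}\alpha_k = \alpha_k$, contributing $(\tau_k)_*\alpha_k$ to the sum. In the latter case $\tau_k^{-1}(V)$ has Lebesgue measure zero in the complex manifold $W_k$, and since $\alpha_k$ has continuous coefficients, $\1_{\tau_k^{-1}(V)}\alpha_k = 0$ as a current. Combining the two cases gives exactly \eqref{bakom2}.

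For (i), I would pick any decomposition \eqref{husmus} of $\mu$, which exists by definition of $\GZ(X)$, and apply (ii); the right-hand side of \eqref{bakom2} is a locally finite $\Z$-linear combination of terms $(\tau_k)_*\alpha_k$, hence belongs to $\GZ(X)$. There is no serious obstacle. The only point to verify carefully is the vanishing $\1_N\alpha=0$ for smooth $\alpha$ and $N$ a proper analytic subset of a complex manifold, which follows locally from the continuity of the coefficients of $\alpha$ and the fact that $N$ has Lebesgue measure zero in any local chart.
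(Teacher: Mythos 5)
Your proposal is correct and follows essentially the same route as the paper's proof: apply \eqref{linalg} termwise, then use connectedness of $W_k$ to see that $\tau_k^{-1}(V)$ is either all of $W_k$ or a proper (hence measure-zero) analytic subset on which the smooth form $\alpha_k$ restricts to the zero current, with (i) following from (ii) since the right-hand side of \eqref{bakom2} is in $\GZ(X)$ by definition. No gaps.
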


\begin{proof}
Since the right hand side of \eqref{bakom2} is in
$\GZ(X)$ by definition, $(i)$ follows from $(ii)$.  
Assume now that \eqref{husmus} holds. 
By \eqref{linalg},
\begin{equation}\label{bakomliggande}
\1_V \mu =\sum_k (\tau_k)_*\big(\1_{\tau_k^{-1}V} \alpha_k\big
). 
\end{equation}
Assume that $\tau_k(W_k)\not \subset V$. Then 
$\tau_k^{-1} (V)$ has positive codimension in $W_k$ since $W_k$ is
connected. Since $\alpha_k$ is smooth it follows that 
$\1_{\tau_k^{-1}V} \alpha_k=0$, and hence the corresponding term in
\eqref{bakomliggande}, vanishes.
Thus \eqref{bakom2} holds.
\end{proof}

If $i\colon V\hookrightarrow X$ is a
subvariety of $X$, then  $[V]=i_*\alpha$, where $\alpha=1$, which is
the $0$th Chern form of any vector bundle over $V$. 
Thus we have an embedding
$$
\Zy_k(X)\to \GZ_k(X)
$$
and we think of $\Zy_k(X)$ as a subset of $\GZ_k(X)$.
If $h\colon X'\to X$ is  proper, then we have a natural mapping
\begin{equation}\label{venus}
h_*\colon \GZ_k(X')\to \GZ_k(X).
\end{equation} 
Indeed, if $\mu=\tau_* \alpha$, then $h_*\mu=(h\circ\tau)_*\alpha$ and
$h\circ\tau$ is proper.
In particular, if $i\colon V\hookrightarrow  X$ is a subvariety of $X$, then we have
an injective mapping  
\begin{equation}\label{snara}
i_*\colon \GZ_k(V)\to \GZ_k(X).
\end{equation}

Given $\mu\in\GZ(X)$ there  is a smallest variety $|\mu|$,
that we call
the {\it Zariski support} of $\mu$, such that
$\mu$ vanishes outside $|\mu|$. In fact, $|\mu|$ is the
Zariski closure of the support of $\mu$ as a current.

\begin{ex}\label{flyga} 
Assume that $X$ is irreducible and let $L\to X$ be the trivial
line bundle. Then any smooth function $\varphi$ on $X$ determines a
metric $|s|^2_L=|s|^2 e^{-\varphi}$ on $L$ with the corresponding
first Chern form $dd^c\varphi$. Since
$\mu:=dd^c\varphi$ can vanish on an open subset of $X$ without vanishing
identically,  it  is a non-zero generalized cycle with support
strictly smaller than $X$ but with $|\mu|=X$. 
\end{ex}

\begin{prop}[Dimension principle]\label{kvick}
Assume that  $\mu\in\GZ_k(X)$ has Zariski support $V$. 
If $\dim V =k$, then  $\mu\in\Zy_k(X)$.
If $\dim V<k$, then $\mu=0$.
\end{prop}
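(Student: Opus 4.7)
My plan is to first handle the case $\dim V<k$ and then bootstrap it to treat the case $\dim V=k$. The overall idea is to apply Lemma~\ref{rest} to restrict attention to those proper maps $\tau_j$ whose image lies in $V$, and then exploit bidegree obstructions together with local fibration structure.

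For the case $\dim V<k$, Lemma~\ref{rest}\,(ii) gives $\mu=\sum_j(\tau_j)_*\alpha_j$ with each irreducible image $\tau_j(W_j)\subset V$ of complex dimension at most $\dim V<k$. The key observation I would exploit is that for any smooth test form $\phi$ of bidegree $(k,k)$ on $X$, the pullback $\tau_j^*\phi$ vanishes identically on $W_j$: at every $w\in W_j$ the differential $d\tau_j$ has rank at most $\dim\tau_j(W_j)<k$, so $(d\tau_j)^*$ annihilates $(k,k)$-forms. Hence $\langle(\tau_j)_*\alpha_j,\phi\rangle=\int_{W_j}\tau_j^*\phi\wedge\alpha_j=0$ and $\mu=0$.

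For the case $\dim V=k$, let $V_1,\dots,V_r$ be the $k$-dimensional irreducible components of $V$ (locally finite list). Since $W_j$ is smooth and connected, the irreducible image $\tau_j(W_j)$ either equals some $V_i$ or has dimension strictly less than $k$. I would then split
\begin{equation*}
\mu=\sum_i\mu_i+\mu',\qquad \mu_i=\sum_{\tau_j(W_j)=V_i}(\tau_j)_*\alpha_j,
\end{equation*}
with $\mu'$ collecting the lower-dimensional remainder. Since $\mu'$ is a generalized cycle of dimension $k$ with Zariski support of dimension $<k$, the first case forces $\mu'=0$, leaving $\mu=\sum_i\mu_i$ with each $\mu_i$ supported on $V_i$.

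It then remains to identify each $\mu_i$ as an integer multiple of $[V_i]$. I would work over the open dense subset $U\subset V_{i,\text{reg}}$ obtained by deleting the other components and the critical values of each contributing $\tau_j$. Over $U$, each such $\tau_j$ restricts to a proper holomorphic submersion with smooth compact complex fibres of dimension $d_j-k$, and $\alpha_j$ has top bidegree $(d_j-k,d_j-k)$ there; hence $(\tau_j)_*\alpha_j|_U=g_j\cdot[V_i]|_U$ where $g_j(x)=\int_{\tau_j^{-1}(x)}\alpha_j$. Integrality of Chern classes makes $g_j(x)\in\Z$, while closedness of the pushforward, equivalently the topological invariance of the cohomology class of $\alpha_j|_{\text{fibre}}$, forces $g_j$ to be locally constant and therefore (since the regular locus of an irreducible analytic set is connected) globally constant on $V_{i,\text{reg}}$. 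Setting $c_i=\sum g_j\in\Z$, the difference $\mu_i-c_i[V_i]$ is a generalized cycle of dimension $k$ with Zariski support in $V_i\setminus U$ of dimension $<k$, and the first case forces $\mu_i=c_i[V_i]$. The hard part will be precisely this last structural step: combining smooth fibration theory, Chern-class integrality, and irreducibility of $V_i$ to extend a locally determined integer multiplicity across the singular and crossing loci.
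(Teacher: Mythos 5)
Your argument is correct, but it is a genuinely different proof from the one in the paper. The paper disposes of both cases in two lines by quoting Demailly's support theorems: \cite[Theorem~III.2.10]{Dem2} kills the case $\dim V<k$, and \cite[Corollary~III.2.14]{Dem2} gives the structure $\mu=\sum a_j[V_j]$ with constant coefficients $a_j$ in the case $\dim V=k$; the integrality of the $a_j$ is then obtained by a forward reference to Proposition~\ref{gatsten}, whose proof identifies Lelong numbers of generalized cycles with intersection numbers. You instead work directly from the definition of $\GZ(X)$: Lemma~\ref{rest}~(ii) reduces everything to pushforwards $(\tau_j)_*\alpha_j$ with image in $V$, the rank/bidegree obstruction eliminates the terms with $\dim\tau_j(W_j)<k$ (which also re-proves the second assertion without Demailly), and over the regular locus of each $k$-dimensional component the multiplicity is computed as a fibre integral of a closed product of Chern forms over a compact fibre, hence a locally constant Chern number, which is then propagated across the bad locus by another application of the low-dimensional case. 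What the paper's route buys is brevity; what yours buys is self-containedness: you avoid both the external support theorems and the forward reference to Proposition~\ref{gatsten} (whose surrounding definitions themselves lean on the dimension principle), and you get integrality for free from Chern numbers rather than from Lelong-number considerations. The only points worth making explicit in a write-up are (a) that the rank of $d\tau_j$ at \emph{every} point of $W_j$ is bounded by $\dim\tau_j(W_j)$ (lower semicontinuity of the rank plus Remmert), which is what makes $\tau_j^*\phi\equiv 0$ for $(k,k)$-test forms and not just generically, and (b) that the set $U$ you work over remains connected after removing the critical values and the other components, so that local constancy of $g_j$ upgrades to global constancy.
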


\begin{proof} Since $\mu$ is closed, of  dimension $k$
and order zero it follows from 
\cite[Corollary~III.2.14]{Dem2} that it is a sum of various
currents $a_j[V_j]$ where $V_j$ is irreducible of dimension $k$
and $a_j$ is a number.
By Proposition~\ref{gatsten} below the Lelong number of a generalized cycle is an integer at each point, and it
follows that the $a_j$ are integers.
If $\dim V<k$ it follows from  \cite[Thm~III.2.10]{Dem2} 
that $\mu=0$.
\end{proof}

\begin{ex}\label{lokalkonstant}
If $\mu\in\GZ_n(X)$,  then $\mu=\sum_j a_j\1_{X_j}$,
where $X_j$ are the irreducible components of $X$ and $a_j$ are integers. 
\end{ex}

\begin{prop}\label{basic}
The image of \eqref{snara} is precisely those $\mu\in\GZ_k(X)$ such that
$|\mu|\subset V$.
\end{prop}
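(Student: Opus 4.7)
The plan is to prove the two inclusions separately, the second one being the substantive direction and resting essentially on Lemma~\ref{rest}.

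For the ``$\subseteq$'' direction: if $\mu = i_*\nu$ for some $\nu \in \GZ_k(V)$, then as a current $\mu$ vanishes on the open set $X\setminus V$, so the ordinary support of $\mu$ is contained in $V$. Since $V$ is Zariski closed, the Zariski closure $|\mu|$ of that support is also contained in $V$.

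For the ``$\supseteq$'' direction, suppose $\mu\in\GZ_k(X)$ has $|\mu|\subset V$, and write
\[
\mu=\sum_k (\tau_k)_*\alpha_k
\]
as in \eqref{husmus}, with $\tau_k\colon W_k\to X$ proper, $W_k$ smooth and connected, and $\alpha_k$ a product of components of Chern forms on $W_k$. Because $V$ is closed and the ordinary support of $\mu$ lies in $|\mu|\subset V$, multiplication by the characteristic function gives $\1_V\mu=\mu$. By Lemma~\ref{rest}(ii),
\[
\mu=\1_V\mu=\sum_{\tau_k(W_k)\subset V}(\tau_k)_*\alpha_k.
\]
For each index $k$ surviving on the right, the holomorphic map $\tau_k\colon W_k\to X$ has set-theoretic image in the reduced subvariety $V$, so it factors uniquely as $\tau_k=i\circ\widetilde{\tau}_k$ for a holomorphic map $\widetilde{\tau}_k\colon W_k\to V$; properness of $\widetilde{\tau}_k$ follows from properness of $\tau_k$ and the fact that $i$ is a closed embedding. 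Then
\[
\nu:=\sum_{\tau_k(W_k)\subset V}(\widetilde{\tau}_k)_*\alpha_k
\]
is a locally finite sum defining an element of $\GZ_k(V)$ (the $\alpha_k$ are already products of Chern forms on $W_k$, so nothing needs to be pulled back), and by functoriality of push-forward $i_*\nu=\mu$.

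The only genuinely delicate step is the reduction $\1_V\mu=\mu$ for order-zero currents whose Zariski support sits in $V$, which is immediate from the measure-theoretic interpretation. Everything else is bookkeeping: Lemma~\ref{rest} does the heavy lifting by identifying which constituent terms of the representation of $\mu$ contribute, and the universal property of the reduced subspace $V$ provides the factorization of the holomorphic maps $\tau_k$ through $i$.
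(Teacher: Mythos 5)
Your proof is correct and follows essentially the same route as the paper: write $\mu$ in the form \eqref{husmus}, use $\mu=\1_V\mu$ together with Lemma~\ref{rest}(ii) to discard the terms with $\tau_k(W_k)\not\subset V$, and factor the surviving $\tau_k$ through $i$ to produce a preimage in $\GZ_k(V)$. The only difference is that you also spell out the easy inclusion, which the paper leaves implicit.
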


Thus we can, and will indeed do,
identify generalized cycles on $V$ with generalized cycles in $X$
with Zariski support on $V$.

\begin{proof}
Assume that $\mu$ is on the form \eqref{husmus} and has support on
$V$.  Since  $\mu=\1_V\mu$ it follows from Lemma~\ref{rest} that
$\mu$ is equal to the right hand side of \eqref{bakom2}. 
For each of these $\tau_k$ we have a factorization
$\tau_k=i\circ \tau_k'$ where $\tau_k'\colon W_k\to V$ is proper.  
It follows that
$$
\mu':=\sum_k(\tau'_k)_* \alpha_k
$$
is in $\GZ(V)$ and $\mu=i_*\mu'$. 
\end{proof}

\begin{df} We say that  $\mu\in\GZ(X)$ is \emph{irreducible} in $X$ if
$|\mu|$  is irreducible and   
$\1_V\mu=0$ for any proper subvariety $V\subset |\mu|$. 
\end{df}

Thus irreducibility is connected to an irreducible subvariety of $X$.   If
$\mu\in\GZ(X)$ is irreducible with Zariski support $V$ it has a unique decomposition
\begin{equation}\label{hund}
\mu=\mu^p+\cdots+\mu^0,
\end{equation}
where $\mu^k$ is the component of dimension $k$ and $p=\dim V$.  It follows from  
Proposition~\ref{kvick}  that $\mu^p$ is $a[V]$ for some integer $a$.

\begin{lma}\label{gastub}
Assume that $\mu\in\GZ(X)$ is of the form $\mu=\tau_*\alpha$, where $\tau\colon W\to X$, $W$ is connected, and $\tau(W)=V$. 
Then $\mu$ is irreducible and $|\mu|=V$ or $\mu=0$.
\end{lma}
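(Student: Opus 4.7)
The plan is to verify the two defining conditions of irreducibility by directly applying Lemma~\ref{rest}, after first identifying what the Zariski support can possibly be.

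First I would observe that $V=\tau(W)$ is itself an \emph{irreducible} analytic subset of $X$. By Remmert's proper mapping theorem the image of $\tau$ is analytic, and since the smooth connected manifold $W$ is irreducible as an analytic space, so is its proper image. In particular, because $\supp \tau_*\alpha \subseteq \tau(W)=V$, we automatically have $|\mu|\subseteq V$.

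The heart of the matter is the following observation, which is essentially the argument inside the proof of Lemma~\ref{rest}(ii): for \emph{any} proper subvariety $V'\subsetneq V$, the set $\tau^{-1}(V')$ has positive codimension in the connected manifold $W$ (because $\tau(W)=V\not\subset V'$), so the smooth form $\alpha$ satisfies $\1_{\tau^{-1}(V')}\alpha=0$. Hence by \eqref{linalg},
\[
\1_{V'}\mu \;=\; \tau_*\bigl(\1_{\tau^{-1}(V')}\alpha\bigr) \;=\; 0.
\]

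The lemma then splits into two cases. If $|\mu|\subsetneq V$, apply this with $V'=|\mu|$ to conclude $\mu=\1_{|\mu|}\mu=0$. If instead $|\mu|=V$, then the same identity, applied with an arbitrary proper subvariety $V'\subsetneq |\mu|=V$, says exactly that $\mu$ is irreducible with Zariski support $V$. There is no real obstacle: the whole statement is a bookkeeping corollary of Lemma~\ref{rest}(ii), once one has noted that the image $V=\tau(W)$ is irreducible so that it is a legitimate candidate for $|\mu|$.
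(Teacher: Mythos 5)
Your proof is correct and follows essentially the same route as the paper's: both reduce everything to the identity $\1_{V'}\mu=\tau_*(\1_{\tau^{-1}V'}\alpha)=0$ for a proper subvariety $V'\subsetneq V$, using that $\tau^{-1}V'$ has positive codimension in the connected $W$ and that $\alpha$ is smooth, and then split into the cases $|\mu|\subsetneq V$ (giving $\mu=0$) and $|\mu|=V$ (giving irreducibility). Nothing further is needed.
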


\begin{proof}
Since $W$ is irreducible, so is $V$. 
Clearly, $|\mu|\subset V$. 
Assume that $V'$ is a proper subvariety of $V$. Then $\tau^{-1}V'$ has
positive codimension in $W$ since $W$ is connected. Thus 
\begin{equation}\label{piano}
\1_{V'}\mu=\tau_*(\1_{\tau^{-1}V'}\alpha)=0
\end{equation}
since $\alpha$ is smooth.  If $|\mu|$ is a proper subvariety of $V$, therefore
$\mu=\1_{|\mu|}\mu=0$.  
If not, it follows from 
\eqref{piano} that $\mu$ is irreducible.  
\end{proof}

Notice that if $\mu,\mu'$ are irreducible with the Zariski support $V$, then 
$\mu+\mu'$ either vanishes or is again irreducible with Zariski support $V$.

\begin{prop}\label{goltupp}
Each   $\mu\in\GZ(X)$ has a unique decomposition
\begin{equation}\label{decomp}
\mu =\sum_j\mu_j,
\end{equation}
where  $\mu_j\in\GZ(X)$ are  irreducible with different Zariski supports.
\end{prop}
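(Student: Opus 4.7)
The plan is to prove existence by starting from any representation of $\mu$ and grouping terms by the Zariski support of each piece, and to prove uniqueness by extracting each component intrinsically via the operator $\1_V$ from Lemma~\ref{rest}.

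For existence, I would write $\mu=\sum_k(\tau_k)_*\alpha_k$ as in the definition of generalized cycles, with each $W_k$ smooth and connected and each $\tau_k$ proper. By Lemma~\ref{gastub}, each summand $(\tau_k)_*\alpha_k$ is either zero or irreducible with Zariski support $V_k:=\tau_k(W_k)$, which is itself irreducible since $W_k$ is connected. I would then group the summands by their images: for each irreducible subvariety $V$ occurring as some $\tau_k(W_k)$, set $\mu_V:=\sum_{k:\tau_k(W_k)=V}(\tau_k)_*\alpha_k$. This sum is locally finite because the original one is, and by iterating the remark following Lemma~\ref{gastub} (sum of two irreducibles with the same Zariski support is again irreducible with that support, or zero), each $\mu_V$ is either zero or irreducible with Zariski support $V$. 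Discarding the zero terms yields $\mu=\sum_V \mu_V$.

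For uniqueness, suppose $\mu=\sum_j\mu_j$ is any such decomposition with $\mu_j$ irreducible and with pairwise distinct Zariski supports $V_j$. I would compute $\1_V\mu$ for an arbitrary irreducible subvariety $V\subseteq X$. By Lemma~\ref{rest} and the linearity of $\1_V$ on locally finite sums,
\[
\1_V \mu = \sum_j \1_V\mu_j.
\]
Since $\mu_j$ is supported on $V_j$, we have $\1_V\mu_j=\1_{V\cap V_j}\mu_j$. If $V_j\subseteq V$, then $\1_V\mu_j=\mu_j$; if $V_j\not\subseteq V$, then $V\cap V_j\subsetneq V_j$, so by irreducibility of $\mu_j$ one has $\1_V\mu_j=0$. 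Writing $\mu_V$ for the unique summand with $|\mu_V|=V$ (and $\mu_V:=0$ if no such summand exists), this yields
\[
\1_V \mu = \mu_V + \sum_{V'\subsetneq V} \mu_{V'}.
\]

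This last identity determines the components recursively, which gives uniqueness. By induction on $\dim V$: when $\dim V=0$ there is no proper irreducible $V'\subsetneq V$ and so $\mu_V=\1_V\mu$ is determined by $\mu$; for $\dim V>0$, assuming $\mu_{V'}$ is determined by $\mu$ for every irreducible $V'$ of smaller dimension, the formula $\mu_V=\1_V\mu-\sum_{V'\subsetneq V}\mu_{V'}$ shows $\mu_V$ is determined by $\mu$ as well. Hence any two decompositions must coincide term-by-term. The main technical point is the vanishing $\1_V\mu_j=0$ when $V_j\not\subseteq V$, which is exactly the defining property of irreducibility of $\mu_j$; without this the recursion would pick up spurious contributions from components whose Zariski supports merely meet $V$ in a proper subvariety.
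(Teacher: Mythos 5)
Your proof is correct and follows essentially the same route as the paper: the existence argument (grouping the summands $(\tau_k)_*\alpha_k$ by their images and invoking Lemma~\ref{gastub}) is identical, and your uniqueness argument rests on the same key fact the paper uses, namely that $\1_V\mu_j=\1_{V\cap V_j}\mu_j=0$ whenever $V_j\not\subseteq V$, by irreducibility. The only difference is cosmetic: you package this as an explicit recursion $\mu_V=\1_V\mu-\sum_{V'\subsetneq V}\mu_{V'}$ with induction on $\dim V$, whereas the paper reduces to a decomposition of $0$ and applies $\1_{V_k}$ to a nonzero component of minimal dimension to reach a contradiction.
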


\begin{proof} 
We first prove the uniqueness. Let $V_j=|\mu_j|$. Assume that
\eqref{decomp} holds with $\mu=0$. If there are non-vanishing $\mu_j$ then we can choose
$k$ such that $\mu_k\neq 0$ and $V_k$ has minimal dimension among the
$V_j$ for which $\mu_j\neq 0$.  For each $j\neq k$ then
$V_k\cap V_j$ has positive codimension in $V_j$ and hence
$\1_{V_k}\mu_j=\1_{V_k\cap V_j}\mu_j=0$ since $\mu_j$ is irreducible.
Thus
$
\mu_k=\1_{V_k}\mu_k=\1_{V_k}\mu=0
$
which is a contradiction. We conclude that $\mu_j=0$ for all $j$.

To prove the existence, we may assume that $\mu$ is of the form 
\eqref{husmus},
where $\tau_k:W_k\to X$ and $W_k$ are connected.  
For each subvariety $V_j\subset X$ that appears as the Zariski support of one
of the summands in \eqref{husmus}, let $\mu_j=\sum(\tau_k)_*
\alpha_k$, where the sum is over all $k$
such that $\tau_k(W_k)=V_j$. Then,  by Lemma \ref{gastub},  $\mu_j$ is
irreducible with Zariski support $V_j$ or $\mu_j=0$. 
We now  get the decomposition \eqref{decomp}.  
\end{proof}

\begin{remark}\label{planta}
It follows from the proof that an irreducible $\mu\in\GZ(X)$ with $|\mu|=V$ is a finite sum
of terms like $i_*\tau_*\alpha$ where $\tau\colon W\to V$ is proper, $\tau(W)=V$ and  $W$ is 
irreducible.   Since $\tau$ is proper it is a submersion outside an analytic set $\tau^{-1}V'$,
where $V'\subset V$ has positive codimension, so that $\gamma=\tau_*\alpha$ is closed
and smooth on $V\setminus V'$. 
\end{remark}

Given  $\mu\in\GZ(X)$, for each  each of the irreducible
components  $\mu_j$ in \eqref{decomp}
consider the decomposition 
$\mu_j^{p_j}+\cdots+\mu_j^0$ as in \eqref{hund}.
We have the unique decomposition
\begin{equation}\label{deko}
\mu=\mu_{fix}+\mu_{mov},
\end{equation}
where  
\begin{equation}\label{dog5}
\mu_{fix}:=\sum_j \mu_j^{p_j},
\end{equation}
\begin{equation}\label{hund5}
\mu_{mov}=\sum_j \sum_{k<p_j} \mu_j^k,
\end{equation}
are called the  {\it fixed} and {\it moving} part of $\mu$, respectively. Notice that $\mu_{fix}$ is a cycle
in view of the dimension principle.
We say that each term in \eqref{dog5}
is a {\it fixed component} and each term in \eqref{hund5} a {\it moving component}
of $\mu$.
The reason for this terminology will be clarified in Section~\ref{vogelsec} but already
here we can present an illustrating example of a moving generalized cycle:

\begin{ex}\label{planmedel}
Assume that $X=\P^n_{[z_0:\ldots:z_n]}$ and let $\theta=dd^c\log\big (|z_1|^2+\cdots
+|z_n|^2\big )$.  
Then $\theta^{n-k}$, $k\ge 1$, 
is a generalized cycle in $\Pk^n$ of
dimension $k$ and with  Zariski support $\Pk^n$.  
To see this, let $\pi\colon
Bl_p\P^n\to\P^n$ be the blow-up at $p=[1\colon 0:\ldots :0]$
and notice that $\theta=\pi_*\hat \omega$, where
$\hat\omega$
is minus the first Chern form of the line bundle, with 
respect to the ``standard'' metric,  associated with the exceptional divisor $D$.
By repeated use of \eqref{ingen} we have that 
$\theta^{n-k}=\pi_* \hat \omega^{n-k}$ outside the origin. 
Since both sides are positive closed
currents it follows by the dimension principle that the equality must hold across $p$.
Thus $\theta^{n-k}$ is in $\GZ_k(\Pk^n)$ and by Lemma~\ref{gastub}
it is irreducible with Zariski support $\Pk^n$.  Thus it
has one single moving irreducible component. One can verify that  $\theta^{n-k}$ 
is indeed a mean value of all $k$-planes through $p$,  cf.\ \cite[Eq.\ (6.2)]{aswy}
with $f=(z_1, \ldots, z_n)$.  More conceptually, 
one can thus think of $\theta^{n-k}$ as such a $k$-plane moving around $p$.  
\end{ex}

\section{Equivalence classes of generalized cycles}\label{profet}
If   
$0\to S\to E\to Q\to 0$ is a short exact sequence
of Hermitian vector bundles over $X$ we say that 
\begin{equation}\label{syssling}
\hat c(E)-\hat c(S)\w\hat c(Q)
\end{equation}
is a {\it $B$-form} on $X$.
Let  $\beta$ be the component of bidegree $(k,k)$ of a $B$-form.  If $k=0$ then $\beta=0$
so let us assume that $k\ge 1$.
In view of \eqref{mor}
one can just as well use the differential-geometric definition of Chern form. From \cite[Proposition~4.2]{BC}
we get a smooth form $\gamma$ on $X$ of
bidegree $(k-1,k-1)$ such that $\beta=dd^c\gamma$.  In fact in  \cite{BC} only the case 
when $X$ is smooth is discussed. 
However, the construction of $\gamma$ is completely explicit and local, 
and locally we can extend our short exact sequence to a \nbh in a smooth ambient space 
and conclude that  $\gamma$ is smooth on $X$.

Notice for future reference that if
$\tau \colon W\to X$, then $\tau^*\beta$ is a component of a $B$-form if $\beta$
is.  
We say that  $\mu \in \GZ_k(X)$ is equivalent to $0$ in $X$,
$\mu\sim 0$,  if $\mu$ is a locally finite sum of currents of the form
\begin{equation}\label{kusin}
\rho=\tau_* (\beta \w \alpha) = dd^c \tau_*(\gamma\wedge \alpha),
\end{equation}
where $\tau\colon W\to X$ is proper, $W$ is smooth and connected, $\beta$ is
a component
of a $B$-form on $W$,  and
$\alpha$ is a product of components of Chern forms. 
If $\mu=\mu_0+\mu_1+\cdots$, where $\mu_k\in\GZ_k(X)$,  we say that
$\mu\sim 0$ if  $\mu_k\sim 0$ for each $k$.  
Let $\B(X)$ denote the $\Z$-module of generalized cycles on $X$ modulo this equivalence.
A class $\mu\in\B(X)$ has {\it pure
dimension $k$},  $\mu\in\B_k(X)$,  if $\mu$ has a representative  
 in $\GZ_k(X)$.   Thus $\B(X)=\oplus_k\B_k(X)$.

 If $E\to X$ is a Hermitian vector bundle, then for each $k$ we have the mapping
 \begin{equation}\label{skuta}
 \hat c_k(E)\w\colon \GZ_*(X)\to\GZ_{*-k}(X), \quad \mu\mapsto  \hat c_k(E)\w\mu.
 \end{equation}

\begin{prop}\label{pluttens} 
The mapping \eqref{skuta} induces  a mapping
\begin{equation}\label{skuta1}
c_k(E)\w\colon \B_*(X)\to\B_{*-k}(X)
\end{equation}
with the following properties: If  $F\to X$ is another vector bundle, then
\begin{equation}\label{skuta2}
c_\ell(F)\w c_k(E)\w \mu= c_k(E)\w c_\ell(F)\w \mu.
\end{equation}
If $f\colon W\to X$ is proper, then
\begin{equation}\label{skuta3}
f_*\big( c_k(f^* E) \w \mu\big)= c_k(E)\w f_*\mu
\end{equation}
for $\mu\in\B(W)$.
If $0\to S\to E\to Q\to 0$ is a short exact sequence on $X$,  then
\begin{equation}\label{skuta4}
c_k(E)\w\mu=\big(c(S)\w c(Q)\big)_k\w\mu.
\end{equation}
\end{prop}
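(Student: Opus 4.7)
My plan is to first verify that the map $\mu\mapsto\hat c_k(E)\w\mu$ descends from $\GZ(X)$ to a well-defined map on $\B(X)$---both with respect to the choice of representative and the choice of Hermitian metric on $E$---and then to read off the three identities~\eqref{skuta2}--\eqref{skuta4} from form-level identities combined with the projection formula~\eqref{ingen}.

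For well-definedness, suppose $\hat\mu\sim 0$ is written as a locally finite sum of terms $\tau_*(\beta\w\alpha)$ as in~\eqref{kusin}. By~\eqref{ingen},
\[
\hat c_k(E)\w\tau_*(\beta\w\alpha)=\tau_*\bigl(\tau^*\hat c_k(E)\w\beta\w\alpha\bigr),
\]
and since $\tau^*\hat c_k(E)\w\alpha$ is a product of Chern form components on $W$ while $\beta$ remains a $B$-form component on $W$ (by the remark just after~\eqref{syssling}), the right-hand side is again of the form~\eqref{kusin}. For metric-independence, the key observation is that if $E$ and $E'$ denote the same bundle with two Hermitian metrics, then the difference $\hat c_k(E')-\hat c_k(E)$ is itself a $B$-form component: take the short exact sequence $0\to 0\to E'\to E\to 0$ (identity on the underlying bundle, with the chosen metrics on source and target), and apply~\eqref{syssling}, which evaluates to $\hat c(E')-\hat c(E)$. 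The same computation as above then gives $(\hat c_k(E')-\hat c_k(E))\w\hat\mu\sim 0$ for every $\hat\mu\in\GZ(X)$.

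Granted well-definedness, the three identities are essentially form-level statements. The commutation~\eqref{skuta2} holds at the $\GZ$-level because Chern form components have even total degree and therefore commute under wedge. The projection formula~\eqref{skuta3} is a direct application of~\eqref{ingen} to $\eta=\hat c_k(E)$, together with the identity $f^*\hat c_k(E)=\hat c_k(f^*E)$ when $f^*E$ carries the pulled-back metric; metric-independence, already established, absorbs any other choice of Hermitian structure on $f^*E$. Finally, the Whitney formula~\eqref{skuta4} is almost tautological: the $(k,k)$-component of $\hat c(E)-\hat c(S)\w\hat c(Q)$ is by construction a $B$-form component, so wedging with any representative $\hat\mu\in\GZ(X)$ produces a current of type~\eqref{kusin} and thus vanishes in $\B(X)$.

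The step I expect to be most delicate is the metric-independence part of well-definedness, specifically the observation that differences of Chern forms of a single bundle under different Hermitian metrics are realized as $B$-form components via the trivial short exact sequence. Once this identification is in place, the remainder of the proof is a routine unpacking of~\eqref{ingen} and the definitions in Section~\ref{profet}.
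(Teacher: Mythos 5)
Your proof is correct and follows essentially the same route as the paper's: well-definedness on $\B(X)$ by pulling back $\hat c_k(E)$ through $\tau$ in a representation \eqref{kusin}, metric-independence via the degenerate exact sequence with trivial sub-bundle (the paper takes $S=0$ in $0\to S\to E\to Q\to 0$, which is the same device as your $0\to 0\to E'\to E\to 0$), and then \eqref{skuta2}--\eqref{skuta4} read off from \eqref{ingen}, \eqref{orgie2}, and the fact that pullbacks of $B$-form components are $B$-form components. No gaps.
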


\begin{proof}
First assume that $\hat\mu\in\GZ(X)$ and $\hat\mu\sim 0$. With the notation above we may assume that 
$\hat\mu=\tau_*(\beta\w\alpha)$, where $\tau\colon W\to X$ and 
$\tau$ is a $B$-form on $W$.  It follows that $\hat c_k(E)\w\hat\mu=
\tau_*(\beta\w\hat c_k(\tau^*E)\w\alpha)$ and hence by definition $\sim 0$. Thus 
$\hat c_k(E)\w$ is well-defined on $\B(X)$. We  must verify that it does not
depend on the particular choice of metric on $E$. 
To this end, assume that $0\to S\to E\to Q\to 0$ is a short exact sequence of Hermitian
vector bundles on $X$ and let $\beta$ be the component of bidegree $(k,k)$
of the associated $B$-form.  
Assume that $\tau\colon W\to X$ and $\hat\mu=\tau_*\alpha$
is an element in $\GZ(X)$. Then $0\to \tau^*S\to \tau^*E\to \tau^*Q\to 0$ is
a short exact sequence on $W$ and $\tau^*\beta$ is the component of bidegree $(k,k)$
of the associated $B$-form on $W$. It follows that
\begin{equation}\label{skuta12}
\beta\w\hat\mu=\tau_*(\tau^*\beta\w \alpha)\sim 0.
\end{equation}
If $S=0$ so that $E$  and $Q$ are isomorphic but with possibly different metrics, 
then $\beta=\hat c_k(E)-\hat c_k(Q)$ so we can conclude that 
$\hat c_k(E)\w\hat\mu-\hat c_k(Q)\w\hat\mu=0$ in $\B(X)$.  
Thus \eqref{skuta1} is well-defined. Now \eqref{skuta2} and \eqref{skuta3} are obvious and
\eqref{skuta4} follows from \eqref{skuta12}.
\end{proof}

\begin{remark}\label{bongo}
If $\beta$ is a component of \eqref{syssling}, but where all $\hat c$ are
replaced by $\hat s$, then still $\beta\w\alpha\sim 0$. In fact, if lower index $\ell$ denotes
component of bidegree $(\ell,\ell)$,  then
$$
\big(\hat s(E)-\hat s(S)\w\hat s(Q)\big)_k=
\sum_{\ell=0}^k \big( \hat c(E)-\hat c(S)\w\hat c(Q)\big)_\ell\w \big(\hat s(E)\w\hat s(S)\w\hat s(Q)\big)_{k-\ell},
$$
so the claim follows from Remark~\ref{kola}.  
It is clear that Proposition~\ref{pluttens} holds, with the same proof, if $c$ is replaced by $s$.
\end{remark}

Notice that if $h\colon X'\to X$ is a proper mapping and
$\mu\sim 0$, then
$h_*\mu\sim 0$ so we  have a natural mapping
$
h_* \colon \B(X)\to \B(X').
$

\begin{lma}\label{ska}
If $i\colon V\hookrightarrow X$ is a subvariety, then $i_*\colon \B(V)\to \B(X)$ is injective.
\end{lma}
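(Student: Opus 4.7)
The plan is to lift an equivalence relation that lives on $X$ back to one on $V$, by combining the support argument from the proof of Lemma~\ref{rest} with the injectivity of the pushforward on the level of generalized cycles.

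Suppose $\mu \in \GZ(V)$ and $i_*\mu \sim 0$ in $X$. By definition this means we have a locally finite decomposition
\begin{equation*}
i_*\mu = \sum_k (\tau_k)_*(\beta_k \wedge \alpha_k),
\end{equation*}
where $\tau_k\colon W_k \to X$ is proper with $W_k$ smooth and connected, $\beta_k$ is a component of a $B$-form on $W_k$, and $\alpha_k$ is a product of components of Chern forms. Since $|i_*\mu|\subset V$, we can apply $\1_V$ to both sides, and by \eqref{linalg} get $\1_V(\tau_k)_*(\beta_k\wedge\alpha_k)=(\tau_k)_*(\1_{\tau_k^{-1}V}(\beta_k\wedge\alpha_k))$. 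Exactly as in the proof of Lemma~\ref{rest}, if $\tau_k(W_k)\not\subset V$ then $\tau_k^{-1}V$ has positive codimension in the connected smooth $W_k$, and because $\beta_k\wedge\alpha_k$ is smooth the corresponding term vanishes. So we are left with
\begin{equation*}
i_*\mu = \sum_{\tau_k(W_k)\subset V}(\tau_k)_*(\beta_k\wedge\alpha_k).
\end{equation*}

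For each remaining index $k$, I factor $\tau_k = i\circ\tau_k'$ where $\tau_k'\colon W_k\to V$ is proper, and set $\mu' := \sum (\tau_k')_*(\beta_k\wedge\alpha_k)\in\GZ(V)$. This sum is locally finite on $V$ since the original one was locally finite on $X$ and $V$ carries the subspace topology. Note that $\beta_k$ is still a component of a $B$-form on $W_k$ (the $B$-form structure is intrinsic to $W_k$, so the factorization of $\tau_k$ through $V$ does not affect it), so by definition $\mu'\sim 0$ in $V$. Finally, $i_*\mu' = i_*\mu$ in $\GZ(X)$, and the pushforward $i_*\colon\GZ(V)\to\GZ(X)$ is injective (Proposition~\ref{basic}, since $i$ is an embedding and $i_*$ is just extension by zero). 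Hence $\mu=\mu'\sim 0$ in $V$, which is what had to be shown.

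The argument is essentially bookkeeping; the only point that needs a moment's attention is that the definition of equivalence depends on the ambient space only through the requirement that the target of the map $\tau$ be that ambient space. Once one has restricted to those summands whose source already maps into $V$, the factorization through $V$ is automatic and the relation descends. Local finiteness on $V$ and preservation of the $B$-form structure under the factorization are the small technical points to verify, but neither presents a real obstacle.
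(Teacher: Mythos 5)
Your proof is correct and follows the same route as the paper: reduce, via the support argument of Lemma~\ref{rest}, to summands whose maps $\tau_k$ factor through $V$, then pull the whole decomposition back to $V$ and invoke injectivity of $i_*$ on the level of currents. The only difference is that you spell out explicitly the two points the paper leaves implicit (that Lemma~\ref{rest}'s argument still applies when the smooth form is $\beta_k\wedge\alpha_k$ rather than a bare product of Chern forms, and that $\mu=\mu'$ follows from injectivity of the pushforward), which is fine.
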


\begin{proof}
Assume that $\mu\in\GZ(V)$ and $i_*\mu\sim 0$ in $\GZ(X)$. Then $i_*\mu=\sum\rho_j$, where
$\rho_j=(\tau_j)_*(\beta_j\w\alpha_j)$, $\tau_j:W_j\to X$, are as in 
\eqref{kusin}.
In view of Lemma~\ref{rest} we may assume that 
$\tau_j(W_j)\subset V$ for each $j$. 
For each  $j$ there is a map $\tau_j':W_j\to V$ such that $\tau_j=i\circ
\tau'_j$. 
Let $\rho_j'=(\tau'_j)_*(\beta_j\wedge\alpha_j)$.  
Then 
$$
i_*\mu=\sum_j \rho_j=i_*\sum_j\rho_j',
$$
so that 
$
\mu=\sum_j\rho_j'.
$
Thus
$\mu\sim 0$ on $V$.
 \end{proof}

\begin{prop}\label{svabb}
The mapping $\Zy(X)\to \B(X)$ is injective.
\end{prop}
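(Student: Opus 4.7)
The plan is to reduce injectivity to a bidegree obstruction for pushforwards of $B$-forms. Since the equivalence $\sim$ respects the grading by dimension, it suffices to prove the map $\Zy_p(X)\to \B_p(X)$ is injective for each $p$. Suppose $\mu\in\Zy_p(X)$ satisfies $\mu\sim 0$, so $\mu=\sum_j\rho_j$ with $\rho_j=(\tau_j)_*(\beta_j\wedge\alpha_j)\in\GZ_p(X)$ of the form \eqref{kusin}, where $\tau_j\colon W_j\to X$ is proper, $W_j$ smooth and connected, $\beta_j$ the component of bidegree $(k_j,k_j)$ of a $B$-form on $W_j$ with $k_j\ge 1$, and $\alpha_j$ a product of Chern forms of bidegree $(\ell_j,\ell_j)$. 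A bidegree count forces $\dim W_j=p+q_j$ with $q_j:=k_j+\ell_j\ge 1$.

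I would then localize to an arbitrary irreducible subvariety $V\subset X$ of dimension $p$ by multiplying both sides by $\1_V$. Since $\beta_j\wedge\alpha_j$ is smooth on the connected manifold $W_j$, the argument in the proof of Lemma~\ref{rest}(ii) applies verbatim and yields $\1_V\rho_j=0$ unless $\tau_j(W_j)\subset V$, in which case $\tau_j=i_V\circ\tau_j'$ for a proper map $\tau_j'\colon W_j\to V$. Setting $\sigma_j:=(\tau_j')_*(\beta_j\wedge\alpha_j)\in \GZ_p(V)$ and invoking the injectivity in Proposition~\ref{basic}, the identity $\1_V\mu=a_V[V]$ (where $a_V\in\Z$ is the coefficient of $V$ in $\mu$) becomes $a_V\,\1_V=\sum_{\tau_j(W_j)\subset V}\sigma_j$ inside $\GZ_p(V)$.

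The crux, which I expect to be the only real content, is that each $\sigma_j$ vanishes as a current. Applying the Bott--Chern construction recalled just before the definition of $\sim$, one writes $\beta_j=dd^c\gamma_j$ with $\gamma_j$ smooth of bidegree $(k_j-1,k_j-1)$, so that $\sigma_j=dd^c(\tau_j')_*(\gamma_j\wedge\alpha_j)$. But $\gamma_j\wedge\alpha_j$ has complex dimension $\dim W_j-(q_j-1)=p+1$ on $W_j$; proper pushforwards preserve complex dimension, and no nonzero current of complex dimension $p+1$ can live on the $p$-dimensional space $V$. Thus $\sigma_j=0$, which forces $a_V=0$ for every such $V$ and hence $\mu=0$. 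The only mildly delicate bookkeeping is extending Lemma~\ref{rest}(ii) to smooth factors of the form $\beta_j\wedge\alpha_j$ rather than pure products of Chern forms, but this is automatic since the proof only uses smoothness of the integrand together with connectedness of $W_j$.
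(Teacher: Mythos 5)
Your proof is correct and follows essentially the same route as the paper's: restrict to the ($p$-dimensional) support, factor the surviving terms of \eqref{kusin} through it as in Lemma~\ref{rest} and Lemma~\ref{ska}, and kill them by a bidegree count. The only difference is cosmetic: the paper works on all of $|\mu|$ at once and compresses your final step (that $(\tau_j')_*(\gamma_j\wedge\alpha_j)$ is a current of dimension $p+1$ on a $p$-dimensional space, hence zero) into the terse remark that a bidegree-$(0,0)$ current of the form \eqref{kusin} must vanish.
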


Thus we can consider $\Zy(X)$ as a subgroup of $\B(X)$.

\begin{proof}
Assume that  $\mu=\sum_j a_j W_j\in \mathcal{Z}_k(X)$ and $\mu\sim 0$ in $\GZ_k(X)$. 
If $i\colon |\mu|\to X$ is the natural injection and 
$$
\mu'=\sum a_j\1_{W_j},
$$
then $\mu=i_*\mu'$. 
By Lemma~\ref{ska}, $\mu'\sim 0$ in $\GZ(|\mu|)$.
Since $\hat\mu$ has full dimension in $|\mu|$, and thus bidegree $(0,0)$, it must vanish
in view of \eqref{kusin}.
 \end{proof}

\begin{prop}\label{restriktion}
For each open subset $U$ of $X$ there is a 
natural restriction mapping
$r\colon \GZ(X)\to \GZ(U)$ that induces a mapping $r\colon \B(X)\to \B(U)$.
\end{prop}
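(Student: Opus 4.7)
The plan is to define $r$ on the generators of $\GZ(X)$, recognize it as the ordinary restriction of currents (so that it is well-defined and additive), and then check that the equivalence relation used to pass from $\GZ$ to $\B$ is preserved. Concretely, for a single summand $\mu = \tau_*\alpha$ with $\tau\colon W\to X$ proper, $W$ smooth and connected, and $\alpha$ a product of components of Chern forms, I set $W_U := \tau^{-1}(U)$, $\tau_U := \tau|_{W_U}\colon W_U\to U$, and $r(\tau_*\alpha) := (\tau_U)_*\bigl(\alpha|_{W_U}\bigr)$, extended by linearity. The map $\tau_U$ is proper because $\tau_U^{-1}(K)=\tau^{-1}(K)$ for every compact $K\subset U$, and $\alpha|_{W_U}$ is again a product of components of Chern forms for the restricted Hermitian bundles $E_j|_{W_U}$, since Chern forms localize. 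Hence each term of $r(\mu)$ lies in $\GZ(U)$, and the local finiteness on $U$ follows from that on $X$ because compact subsets of $U$ are compact in $X$.

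Next I would observe that $r$ as defined above coincides with the usual restriction of $\mu$, viewed as a current, to the open set $U$. Indeed, for any test form $\phi$ with compact support in $U$, one has $\la \tau_*\alpha,\phi\ra = \la\alpha,\tau^*\phi\ra = \la\alpha|_{W_U},\tau^*\phi\ra = \la (\tau_U)_*(\alpha|_{W_U}),\phi\ra$, because $\tau^*\phi$ has support in $W_U$. This identification shows at once that $r(\mu)$ does not depend on the particular representation of $\mu$ as a sum of generators, and that $r$ is additive and $\Z$-linear on $\GZ(X)$.

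For the induced map on $\B$, it suffices to check that $r$ maps equivalent-to-zero generators to equivalent-to-zero generators. So take $\rho=\tau_*(\beta\w\alpha)$ as in \eqref{kusin}, where $\beta$ is the component of bidegree $(k,k)$ of a $B$-form associated to a short exact sequence $0\to S\to E\to Q\to 0$ of Hermitian vector bundles on $W$. Restriction is an exact functor on locally free sheaves and Hermitian metrics restrict, so on $W_U$ we obtain the short exact sequence $0\to S|_{W_U}\to E|_{W_U}\to Q|_{W_U}\to 0$ of Hermitian bundles, whose associated $B$-form is exactly $\beta|_{W_U}$. Therefore
\[
r(\rho) \;=\; (\tau_U)_*\bigl(\beta|_{W_U}\w\alpha|_{W_U}\bigr)
\]
is again of the form \eqref{kusin} on $U$, and thus is equivalent to zero in $\GZ(U)$. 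Summing over generators and invoking the local finiteness argument above, $r$ descends to a well-defined mapping $\B(X)\to\B(U)$.

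The only minor obstacle is the verification that a $B$-form restricts to a $B$-form, but this is immediate from the fact that restriction preserves exactness of sequences of holomorphic vector bundles and preserves Hermitian metrics; everything else is bookkeeping with proper maps and pushforwards.
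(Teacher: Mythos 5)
Your proof is correct and follows essentially the same route as the paper's: both define $r$ on a generator $\tau_*\alpha$ by restricting $\tau$ and $\alpha$ to $\tau^{-1}(U)$, identify the result with the ordinary restriction of the current to $U$, and observe that properness, Chern forms, and $B$-forms are all preserved under restriction so that the map descends to $\B$. Your version merely spells out in more detail the properness of $\tau|_{\tau^{-1}(U)}$ and the pairing computation that the paper leaves implicit.
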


\begin{proof}  Assume that $\mu\in\GZ(X)$ and $\mu=\tau_*\alpha$.
Then the restriction of the current $\mu$ to $U$ is equal to
$\tau'_*\alpha'$, where $\tau'$ and $\alpha'$ are the restrictions to
$U':=\tau^{-1}U$  of $\tau$ and $\alpha$, respectively.  Notice that
$\tau'\colon U'\to U$ is proper and that $\alpha'$ is a product of components of Chern forms 
since  $\alpha$ is.   
Since also the restriction to $U'$ of a $B$-form is a $B$-form, it follows
that $r$ is well-defined on $\B(X)$.
\end{proof}

\begin{lma} \label{brodd}
Assume that $\mu\in\GZ(X)$, $\mu\sim 0$, and that
\eqref{decomp} is its  decomposition in irreducible components.
Then $\mu_j\sim 0$ for each $j$.
\end{lma}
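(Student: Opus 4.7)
The plan is to show that the equivalence relation $\sim 0$ is preserved by multiplication by the characteristic function $\1_V$ of any analytic subvariety $V\subset X$, and then to isolate each irreducible component $\mu_j$ via an appropriate $\1_{V_k}$, exactly as in the uniqueness argument of Proposition~\ref{goltupp}.

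First I would prove that if $\mu\in\GZ(X)$ with $\mu\sim 0$ and $V\subset X$ is any subvariety, then $\1_V\mu\sim 0$. By definition, $\mu=\sum_\ell \rho_\ell$ locally finitely, with each
$$
\rho_\ell=(\tau_\ell)_*(\beta_\ell\wedge\alpha_\ell),\qquad \tau_\ell\colon W_\ell\to X,
$$
where $W_\ell$ is smooth and connected, $\beta_\ell$ is a component of a $B$-form, and $\alpha_\ell$ is a product of components of Chern forms. Mimicking the proof of Lemma~\ref{rest}, since $\beta_\ell\wedge \alpha_\ell$ is smooth, \eqref{linalg} gives
$$
\1_V\rho_\ell=(\tau_\ell)_*\bigl(\1_{\tau_\ell^{-1}V}(\beta_\ell\wedge\alpha_\ell)\bigr),
$$
which equals $\rho_\ell$ if $\tau_\ell(W_\ell)\subset V$ and vanishes otherwise (because then $\tau_\ell^{-1}V$ has positive codimension in the connected manifold $W_\ell$ and $\beta_\ell\wedge\alpha_\ell$ is smooth). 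In either case $\1_V\rho_\ell\sim 0$, so $\1_V\mu\sim 0$.

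Next I would peel off the irreducible components one at a time. Working on a relatively compact open subset of $X$ (so that only finitely many $\mu_j$ contribute, by local finiteness of \eqref{decomp}), I would select an index $k$ such that $V_k:=|\mu_k|$ has minimal dimension among those $V_j$ with $\mu_j\neq 0$. For every other non-vanishing $\mu_j$, either $\dim V_j>\dim V_k$ or $\dim V_j=\dim V_k$ with $V_j\neq V_k$; in both cases $V_k\cap V_j$ is a proper subvariety of $V_j$, so irreducibility of $\mu_j$ forces $\1_{V_k}\mu_j=\1_{V_k\cap V_j}\mu_j=0$. Since $\1_{V_k}\mu_k=\mu_k$, this yields $\1_{V_k}\mu=\mu_k$, and by the first step $\mu_k\sim 0$.

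Finally, I would iterate: $\mu-\mu_k\sim 0$ has a strictly smaller set of nonzero irreducible components (on the relatively compact open set), so descending induction on the number of components shows that every $\mu_j\sim 0$. Patching over an exhaustion of $X$ by relatively compact opens (and invoking the restriction map of Proposition~\ref{restriktion} together with the fact that $\sim 0$ is determined by local representatives summing locally finitely) concludes the proof. The main technical point is Step~1: the smoothness of $\beta\wedge\alpha$, inherited from the smoothness of $B$-forms and Chern forms, is what makes the restriction-via-$\1_V$ argument work, and is the only nontrivial ingredient beyond the dimension considerations already used for uniqueness in Proposition~\ref{goltupp}.
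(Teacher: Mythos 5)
Your Steps 1 and 2 are correct, and they take a genuinely different route from the paper. The paper's proof is essentially one line: writing $\mu=\sum_\ell(\tau_\ell)_*(\beta_\ell\w\alpha_\ell)$ as in the definition of $\mu\sim 0$, it invokes the existence part of the proof of Proposition~\ref{goltupp} to identify each $\mu_j$ with the partial sum over those $\ell$ with $\tau_\ell(W_\ell)=|\mu_j|$; each such partial sum is $\sim 0$ by definition, and uniqueness of the decomposition \eqref{decomp} does the rest. You instead prove the stability statement that $\1_V$ preserves $\sim 0$ (a $\sim$-analogue of Lemma~\ref{rest}, correctly derived from \eqref{linalg} and the smoothness of $\beta\w\alpha$) and then isolate components by the minimal-dimension device from the uniqueness argument. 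That is a legitimate alternative, and your Step 1 is a useful observation in its own right.

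The gap is in the final globalization. On a relatively compact open $U$ you obtain $\mu_j|_U\sim 0$ in $\GZ(U)$, and you then assert parenthetically that ``$\sim 0$ is determined by local representatives summing locally finitely.'' That is exactly the claim that $\sim 0$ is a local condition, which the paper nowhere establishes and which is not obvious: the converse of Proposition~\ref{restriktion} fails to be automatic because a proper map $\tau\colon W\to U$ does not compose with the open inclusion $U\hookrightarrow X$ to give a proper map into $X$, so the representations \eqref{kusin} produced on the members of an exhaustion cannot simply be reassembled into a single locally finite sum of the required form on $X$. The restriction to relatively compact opens is forced on you only by the ``induction on the number of components,'' and that is what should be changed. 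A global repair that keeps your strategy: induct on $d=\dim|\mu_j|\in\{0,\ldots,n\}$. Assuming all components with support of dimension $<d$ are already $\sim 0$, the current $\mu':=\mu-\sum_{\dim|\mu_i|<d}\mu_i$ is still $\sim 0$ (a locally finite sum of $\sim 0$ terms is $\sim 0$), and for $j$ with $\dim|\mu_j|=d$ the same intersection-dimension argument as in your Step 2 gives $\1_{|\mu_j|}\mu'=\mu_j$ on all of $X$; your Step 1 then yields $\mu_j\sim 0$ directly, with no patching. Alternatively, simply read off the components from the witnessing representation as the paper does.
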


\begin{proof}
Using the notation from above, we can assume that $\mu$ is of the form 
$$
\mu=\sum_\ell (\tau_\ell)_*(\beta_\ell\w\alpha_\ell),
$$
where $\tau_\ell\colon W_\ell\to X$ are proper and $W_\ell$ are connected.  
It follows from the proof of Proposition~\ref{goltupp} that 
$$
\mu_j=\sum_{\tau_\ell(W_\ell)=|\mu_j|} (\tau_\ell)_*(\beta_\ell\w\alpha_\ell)
$$
and thus $\mu_j\sim 0$ by definition.
\end{proof}

Let $\hat\mu$ be a representative of $\mu\in\B(X)$ and let
$
\hat\mu=\sum_j \hat\mu_j
$
be its decomposition in irreducible components.  We claim that for each $j$
the corresponding class $\mu_j$ in $\B(X)$ is independent of the choice of $\hat\mu$.  
In fact,  assume that $\hat\nu$ is another representative with decomposition
$\sum_\ell \hat\nu_\ell$. The sums are (locally) finite and each term corresponds to
a unique irreducible  set, so by adding terms $0$ if necessary we have that
$$
\sum_j (\hat\mu_j-\hat\nu_j)\sim 0
$$
and hence by the lemma $\hat\mu_j-\hat\nu_j\sim 0$ for each $j$.  Now the claim follows, 
and taking into account
only the non-vanishing classes we get the unique decomposition
\begin{equation}\label{tyket}
\mu=\sum_j \mu_j,
\end{equation}
where $\mu_j$ are well-defined elements in $\B(X)$ with well-defined Zariski supports $|\mu_j|$.

In case this sum consists of just one non-zero term we thus have a
well-defined irreducible subvariety, and so the following definition is meaningful:

\begin{df} We say that $\mu\in\B(X)$  is {\it irreducible} if it has a representative $\hat\mu\in\GZ(X)$ that
is irreducible.  The  Zariski support $|\mu|$ of $\mu$ is then equal to  $|\hat\mu|$.  
\end{df}

We have the following simple consequences of the discussion above:

\begin{prop} 
(i)  If $\mu\in\B(X)$ is irreducible and $p=\dim|\mu|$, then we have a unique decomposition
$\mu=\mu^p+\cdots +\mu^0$, where $\mu^k\in\B_k(X)$.
 
\smallskip
\noindent (ii)   Any $\mu\in\B(X)$ has a unique decomposition
$\mu=\mu_1+\mu_2+\cdots$,
where $\mu_j\in\B(X)$ are irreducible.  
\end{prop}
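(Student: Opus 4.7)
The plan is to reduce both parts to the analogous statements already established at the $\GZ$-level, via the key observation that the equivalence $\sim$ respects both the grading by dimension and the decomposition into irreducible components. The former is immediate from the very definition of $\sim 0$ in the excerpt, which is imposed dimension by dimension; the latter is exactly the content of Lemma~\ref{brodd}. In particular, these two observations together yield the graded direct sum decomposition $\B(X) = \bigoplus_k \B_k(X)$.

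For (i), I would choose any irreducible representative $\hat\mu \in \GZ(X)$ with $|\hat\mu| = V$ of dimension $p$ and invoke \eqref{hund} at the generalized cycle level to write $\hat\mu = \hat\mu^p + \cdots + \hat\mu^0$ with $\hat\mu^k \in \GZ_k(X)$. Passing to classes in $\B(X)$ produces the desired decomposition $\mu = \mu^p + \cdots + \mu^0$ with $\mu^k \in \B_k(X)$, and uniqueness is immediate from the graded direct sum noted above (any two decompositions of $\mu$ by dimension must agree dimension by dimension, because their difference is $\sim 0$ and hence $\sim 0$ in each degree).

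For (ii), the plan is to take any representative $\hat\mu \in \GZ(X)$, apply Proposition~\ref{goltupp} to obtain its unique irreducible decomposition $\hat\mu = \sum_j \hat\mu_j$, and push this to $\B(X)$ to get $\mu = \sum_j \mu_j$ with each $\mu_j$ irreducible in the $\B$-sense by construction. The harder part is showing independence of the choice of $\hat\mu$. For this, given a second representative $\hat\nu$ with irreducible decomposition $\sum_\ell \hat\nu_\ell$, I would align the two index sets by Zariski support, padding with zero terms so that both sums are indexed by the same locally finite family of irreducible subvarieties. By Lemma~\ref{gastub}, each difference $\hat\mu_j - \hat\nu_j$ is then either zero or irreducible with the common Zariski support, so $\hat\mu - \hat\nu \sim 0$ is already presented as an irreducible decomposition; Lemma~\ref{brodd} then applies term-by-term to give $\hat\mu_j - \hat\nu_j \sim 0$, i.e.\ $[\hat\mu_j] = [\hat\nu_j]$ in $\B(X)$. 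This bookkeeping of matching irreducible decompositions by Zariski support before invoking Lemma~\ref{brodd} is the only genuinely delicate step; the whole argument is simply an unpacking of structure already assembled in the preceding sections, and no new analytic input is required.
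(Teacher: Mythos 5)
Your proposal is correct and follows essentially the same route as the paper: the paper derives both parts from the preceding discussion, namely the dimension-by-dimension definition of $\sim$ for part (i), and for part (ii) the decomposition of a representative via Proposition~\ref{goltupp} followed by matching the irreducible components of two representatives by Zariski support (padding with zeros) and applying Lemma~\ref{brodd} to their difference. The only quibble is that the fact that $\hat\mu_j-\hat\nu_j$ is zero or irreducible with the common support is the unnumbered remark following Lemma~\ref{gastub} rather than the lemma itself.
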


\begin{df} 
In view of (ii) we define the Zariski support  $|\mu|$ 
as the union of the $|\mu_j|$.
\end{df}

From  Proposition~\ref{basic} and Lemma~\ref{ska} we get 

\begin{prop}\label{nixon}
  If $i\colon V\hookrightarrow X$, then the image of $i_*\colon\B(V)\to \B(X)$ is
precisely the $\mu$ in $\B(X)$ with Zariski support on $V$.
\end{prop}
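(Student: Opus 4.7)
The plan is to deduce the $\B$-level statement from its $\GZ$ counterpart, namely Proposition~\ref{basic}, by combining it with the unique irreducible decomposition \eqref{tyket} in $\B(X)$ and the definition of Zariski support $|\mu|=\bigcup_j|\mu_j|$ given just before the proposition.

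For the inclusion $i_*\B(V)\subset\{\mu\in\B(X):|\mu|\subset V\}$, I would start from $\mu=i_*\nu$ with $\nu\in\B(V)$, choose any representative $\hat\nu\in\GZ(V)$, and apply Proposition~\ref{goltupp} to decompose $\hat\nu=\sum_j\hat\nu_j$ into irreducibles in $\GZ(V)$. Then $i_*\hat\nu=\sum_j i_*\hat\nu_j$ represents $\mu$, and each term $i_*\hat\nu_j$ is irreducible in $\GZ(X)$ with Zariski support $|\hat\nu_j|\subset V$. Indeed, since $i$ is an embedding, $i_*$ preserves support as a set, and irreducibility can be tested through the action of $\1_{V'}$, which commutes with $i_*$ by \eqref{linalg}. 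Passing to $\B(X)$ the terms $[i_*\hat\nu_j]$ form the decomposition \eqref{tyket} of $\mu$, so $|\mu|$ is a union of subvarieties of $V$.

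For the converse, I would assume $\mu\in\B(X)$ satisfies $|\mu|\subset V$ and write $\mu=\sum_j\mu_j$ via \eqref{tyket}; by the definition of $|\mu|$ each $|\mu_j|\subset V$. By the definition of irreducibility in $\B$, each $\mu_j$ admits an irreducible representative $\hat\mu_j\in\GZ(X)$ with $|\hat\mu_j|=|\mu_j|\subset V$, so Proposition~\ref{basic} supplies $\hat\nu_j\in\GZ(V)$ with $i_*\hat\nu_j=\hat\mu_j$. The sum $\hat\nu:=\sum_j\hat\nu_j$ is locally finite on $V$, inherited from local finiteness of $\{|\mu_j|\}$ in $X$ together with closedness of $V$, so it defines a class $\nu\in\B(V)$; one then computes $i_*\nu=\sum_j[\hat\mu_j]=\sum_j\mu_j=\mu$.

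No step is a serious obstacle; the entire argument is bookkeeping once Proposition~\ref{basic}, the decomposition \eqref{tyket}, and the definition of $|\mu|$ in $\B(X)$ are in hand. The only mild technicalities are checking that $i_*$ sends $\GZ(V)$-irreducibles to $\GZ(X)$-irreducibles with the same Zariski support, and the local finiteness of $\sum_j\hat\nu_j$ on $V$, both of which are straightforward.
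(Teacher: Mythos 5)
Your argument is correct and is essentially the paper's: the paper derives Proposition~\ref{nixon} directly from Proposition~\ref{basic} (together with Lemma~\ref{ska}, which is only needed for the subsequent identification of $\B(V)$ with its image), and your use of the decomposition \eqref{tyket} to produce a representative supported in $V$ is exactly the bookkeeping the paper leaves implicit. The only detail worth stating explicitly is why each nonzero irreducible class $\mu_j$ admits an irreducible representative with $|\hat\mu_j|=|\mu_j|\subset V$, which is precisely the content of the discussion following Lemma~\ref{brodd}.
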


That is,  we can identify the elements 
in $\B(V)$ with elements in $\B(X)$ with Zariski support contained in $V$.

Precisely as for generalized cycles we define $\mu_{fix}$ and $\mu_{mov}$ by \eqref{dog5} and \eqref{hund5},
respectively, and  get the unique decomposion, cf.\ \eqref{deko},
\begin{equation}\label{deko2}
\mu=\mu_{fix}+\mu_{mov},
\end{equation}
in $\B(X)$ in a fixed and a moving part, and in view of   Proposition~\ref{svabb}
the fixed part  is indeed a cycle in $X$.

\begin{remark}\label{hotto} 
Let $X$ be compact, $L\to X$ be a line bundle, and $\omega=c_1(L)$. The  {\it mass} 
$$
a:=\int_X \mu\w \omega^j
$$
of $\mu\in\GZ_j(X)$ is an integer that only depends on the class of $\mu$ in $\B_j(X)$ and of $L$. 
In fact,  
we may assume that  
$\mu=\tau_*\alpha$,
where $\alpha$ is a product of first Chern forms of line bundles
over $W$  and $\tau\colon W\to X$ is proper.
Then 
$\mu\w\omega^j=\tau_*(\alpha\w\tau^*\omega^j)$
and thus  
$$
a=\int_W \alpha\w \tau^*\hat\omega^j
$$
which is an integer since it is the  integral of a product of first Chern forms  of line bundles
and thus an intersection number. 
By \eqref{kusin} and Stokes' theorem it only depends on the class of $\mu$ and of $L$. 
When $j=0$ and $\dim|\mu|>0$ we think  of $\mu$ as $a$ points moving around on
$|\mu|$, cf.~Section~\ref{vogelsec}. 
\end{remark}

\section{The $\B$-Segre class}\label{segreclass}

Since any modification $\pi\colon X'\to X$ such that $\pi^*\J$ is principal
factorizes over the blow-up $Bl_\J X$ of $X$ along $\J$, it follows by Proposition~\ref{pluttens} and a standard
argument that $S(\J,X)$, as defined in the introduction, cf.~\eqref{segredef},  is a well-defined element in $\B(Z)$.
Recall the restriction map $r$ of Proposition~\ref{restriktion}. We claim that
\begin{equation}\label{musse}
S(\J|_U,U)= rS(\J,X).
\end{equation}
In fact, by linearity it is enough to check the case when $X$ is irreducible. 
If $\J$ is the $0$-ideal then \eqref{musse} is trivial. If not, 
let  
$\pi\colon X'\to X$ be a modification 
such that $\pi^*\J$ is principal. Then 
the restriction $\pi'\colon \pi^{-1}U\to U$ of $\pi$ is a modification where the pullback of
 $\J|_U$ is principal. Let $D'$ and $L'$ be the restrictions of
$D$ and $L$, respectively,  to $\pi^{-1}U$.  Then
$$
rS(\J,X)=r\pi_*\big([D]\w \frac{1}{1+c_1(L)}\big)= \pi'_*\big( [D']\w \frac{1}{1+c_1(L')}\big)=S(\J|_U,U).
$$

\begin{remark}\label{asegre}
In intersection theory, given a proper subscheme $W\to X$ there is a 
well-defined Chow class $s(W,X)$ in $\A(W)\simeq\A(Z)$, $Z=|W|$,  called the Segre class. As in the introduction
let us think of
$W$ as the nonreduced subspace of $X$ with structure sheaf $\Ok_W=\Ok_X/\J_W$,
where $\J_W$ is a coherent ideal sheaf over $X$.   
  Based on Chapter~4 in \cite{Fult} (the summary on
page 70 and Corollary~4.2.2) it follows that
$s(\J_W,X):=s(W,X)$ can be defined as $S(\J_W,X)$ 
in \eqref{segredef} if we  
interpret $c_1(L)^{j-1}\w[D]$  as the element 
$c_1(L)^{j-1}\cap[D]$ in the Chow group 
$\A(|D|)$ and $\pi_*$ as the push-forward of Chow classes, 
so that 
$s_k(\J_W,X):=(-1)^{k-1}\pi_*(c_1(L)^{k-1}\cap[D])$
is  an element in $\A(Z)$ for $k\ge 1$. 
Since $W$ is proper, $Z$ has positive codimension 
and therefore $s_0(\J_W,X)$ vanishes. 
\end{remark}

We shall now discuss concrete representatives of the $\B$-Segre class.  In particular, these representations
allow us to define the $\B$-Segre class not only on an analytic space but on a
generalized cycle $\mu$.   To this end we first 
consider Monge-Amp\`ere products on $\mu$, cf.\ \cite[Sections~5, 6]{aswy}.
Recall that $\sim$ is the equivalence relation defining $\mathcal{B}(X)$.

\begin{thm}\label{putig}
Assume that $\fff$ is a holomorphic section of a Hermitian bundle $E\to X$ and let $\J$
be the associated coherent sheaf with zero set $Z$.

\smallskip
\noindent (i)   For each $\mu\in\GZ(X)$ the limits
\begin{equation}\label{spott}
(dd^c\log|\fff|^2)^k\w\mu :=\lim_{\epsilon\to 0}
\big(dd^c\log(|\fff|^2+\epsilon)\big)^k\w\mu,
\quad k=0,1,2, \ldots,
\end{equation}
exist and are generalized cycles with Zariski support on $|\mu|$,
and the generalized cycles
\begin{equation}\label{spott2}
M^{\fff}_k\w \mu:=\1_Z(dd^c\log|\fff|^2)^k\w\mu, \quad k=0,1,2, \ldots,
\end{equation}
have Zariski support on $Z\cap|\mu|$.

\smallskip
\noindent (ii)
If $\mu\sim 0$, then $M^{\fff}_k\w\mu\sim 0$. 

\smallskip
\noindent (iii)
If $g$ is a 
holomorphic section of another vector bundle such that \footnote{Between norms $\sim$ has the standard meaning that 
there are constants $c,C>0$ such that $c|\fff|\leq |g|\leq C |\fff|$.} $|\fff|\sim |g|$, then
$M^{\fff}_k\w\mu\sim M^{g}_k\w\mu$.

\smallskip
\noindent (iv)
If $h\colon X'\to X$ is proper and $\mu'\in\GZ(X')$, then
\begin{equation*} 
M^{\fff}_k\w h_*\mu'=h_*\big( M^{h^*\fff}_k\w\mu'\big).
\end{equation*}
\end{thm}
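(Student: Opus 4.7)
My plan is to reduce each of (i)--(iv) to a smooth-manifold calculation. By linearity and Lemma~\ref{rest}, I may write $\mu=\sum_j(\tau_j)_*\alpha_j$ with $\tau_j\colon W_j\to X$ proper, $W_j$ smooth and connected, and $\alpha_j$ a product of Chern forms. For each $j$ I then choose a log resolution $\pi_j\colon W'_j\to W_j$ of $\tau_j^*\J$; after resolution, $\pi_j^*\tau_j^*\fff$ factors locally as $f\cdot\fff'$ with $f$ a holomorphic section of a line bundle $L$ whose divisor is the exceptional divisor $D$ and $\fff'$ nowhere vanishing. Then Proposition~\ref{plf} combined with the recursion \eqref{recur} expresses $(dd^c\log|\pi_j^*\tau_j^*\fff|^2)^k$ as a polynomial in $[D]$ and $\hat c_1(L)$, plus smooth terms coming from $\log|\fff'|^2$.

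For (i), the smoothness of $(dd^c\log(|\fff|^2+\epsilon))^k$ for $\epsilon>0$ and \eqref{ingen} give
\begin{equation*}
\big(dd^c\log(|\fff|^2+\epsilon)\big)^k\w(\tau_j)_*\alpha_j=(\tau_j)_*\Big(\big(dd^c\log(|\tau_j^*\fff|^2+\epsilon)\big)^k\w\alpha_j\Big),
\end{equation*}
and \eqref{44} applied on the smooth $W_j$ together with the fact that wedging with the smooth form $\alpha_j$ commutes with weak limits yields existence of the inner limit. Pushing the log-resolution expression forward via $\tau_j\circ\pi_j$ exhibits $(dd^c\log|\fff|^2)^k\w\mu$ as a finite $\Z$-linear combination of push-forwards of products of Chern forms and Lelong currents, hence as a generalized cycle. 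The Zariski-support statements follow since every approximant is supported on $|\mu|$, and from Lemma~\ref{rest} applied to $\1_Z$.

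For (ii), if $\mu=\tau_*(\beta\w\alpha)$ with $\beta$ a $B$-form on $W$, then $\pi^*\beta$ is again a $B$-form on $W'$ since the pullback of a short exact sequence of Hermitian bundles is short exact. Combining this with the log-resolution description of $M^{\tau^*\fff}_k$ on $W'$ exhibits $M^\fff_k\w\mu$ as a sum of terms of the shape \eqref{kusin}, so $M^\fff_k\w\mu\sim 0$. For (iv), the projection formula \eqref{ingen} applied to the smooth pre-limit, followed by passage to the limit using (i) and by \eqref{linalg} for $\1_Z$, gives the assertion.

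The main obstacle is (iii), the metric-independence modulo $\sim$. My plan is an interpolation via the section $\fff\oplus g$ of $E\oplus F$ with the direct-sum metric, for which $|\fff\oplus g|^2=|\fff|^2+|g|^2$; by the norm equivalence $|\fff|\sim|g|$, the function $u:=\log(|\fff|^2+|g|^2)-\log|\fff|^2$ is bounded on $X\setminus Z$. Using the recursion \eqref{recur}, the difference $M^{\fff\oplus g}_k\w\mu-M^{\fff}_k\w\mu$ expands telescopically as a sum of $dd^c$-exact terms involving $u$ and lower-order Monge-Amp\`ere currents; combined with the short exact sequences $0\to E\to E\oplus F\to F\to 0$ (and its twin), whose associated $B$-form accounts for the Chern-class mismatch between $E\oplus F$ and the original bundles on the log resolution, this identifies the difference on $W'$ with a sum of terms of the shape \eqref{kusin}, yielding $M^{\fff\oplus g}_k\w\mu\sim M^{\fff}_k\w\mu$ in $\B$. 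The symmetric argument with $g$ in place of $\fff$ then completes the proof of (iii).
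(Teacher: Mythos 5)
Your treatment of (i), (ii) and (iv) follows essentially the same route as the paper: reduce to $\mu=\tau_*\alpha$ with $W$ smooth and connected, principalize $\tau^*\J$ on a modification, use the Poincar\'e--Lelong formula to write $dd^c\log|\tau^*\fff|^2=[D]+\hat\omega$ with $\hat\omega=-\hat c_1(L_D)$ for the metric induced on $L_D$ by $\fff'$, and then push forward via \eqref{ingen} and \eqref{linalg}. One small omission: you should treat separately the summands $(\tau_j)_*\alpha_j$ on which $\tau_j^*\fff$ vanishes identically (i.e.\ $\tau_j(W_j)\subset Z$); there the ideal $\tau_j^*\J$ is the zero ideal, no log resolution principalizes it, and the $k=0$ term gives $\1_Z\mu=\mu$ rather than $0$. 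The paper handles this case first and then assumes $\tau^*\fff\not\equiv 0$.

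Part (iii) as written has a genuine gap. Two of the mechanisms you invoke do not do the work you assign to them. First, exhibiting the difference $M^{\fff\oplus g}_k\w\mu-M^{\fff}_k\w\mu$ as a sum of $dd^c$-exact terms involving the bounded function $u$ is not enough: the relation $\sim$ defining $\B(X)$ is strictly finer than $dd^c$-exactness (it requires each term to be of the specific shape $\tau_*(\beta\w\alpha)$ with $\beta$ a component of a $B$-form, cf.\ \eqref{kusin}), and neither boundedness of $u$ on $X\setminus Z$ nor $dd^c$-exactness identifies the difference as such a sum. Second, the short exact sequence $0\to E\to E\oplus F\to F\to 0$ is irrelevant here: the Chern forms of $E$, $F$ and $E\oplus F$ never enter the formula $M^{\fff}_k\w\mu=\tau_*([D]\w\hat\omega^{k-1}\w\alpha)$; only the metric induced on the exceptional line bundle $L_D$ does. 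The point you need, and which the paper uses directly without the detour through $\fff\oplus g$, is this: since $|\fff|\sim|g|$, after a common principalization $\tau^*\fff=\fff^0\fff'$ and $\tau^*g=g^0g'$ with $\fff^0$ and $g^0$ defining the same divisor, hence sections of the same line bundle $L_D$; the two sections induce two metrics on $L_D$ (equivalently, your $\tau^*u$ is a globally defined \emph{smooth} function on the resolution relating them), and the difference of the corresponding first Chern forms is the component of the $B$-form associated with $0\to 0\to L_D\to L_D'\to 0$. Feeding that into $\hat\omega_1^{k-1}-\hat\omega_2^{k-1}=(\hat\omega_1-\hat\omega_2)\w\sum_j\hat\omega_1^{j}\w\hat\omega_2^{k-2-j}$ produces exactly terms of the shape \eqref{kusin}. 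Without this identification your argument only shows that the two currents are cohomologous, not that they are equal in $\B(X)$.
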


The hypothesis in $(iii)$, which clearly holds if both $g$ and $\fff$ define $\J$,
 precisely means that the sheaves defined by $\fff$ and $g$ 
have the same integral closure, see, e.g., \cite{aswy}. We will refer to $(iv)$
as the {\it projection formula}.
We let 
$$
M^\fff\w\mu:=M^\fff_0\w\mu+M^\fff_1\w\mu+\cdots .
$$

\begin{proof}[Proof of Theorem~\ref{putig}]
We can assume that $\mu=\tau_*\alpha$, where $\tau\colon W\to X$ is proper
and $W$ is smooth and connected.  We first consider the case when $\tau^*\fff$ vanishes
identically on $W$, or equivalently, $|\mu|\subset Z$.   For $k\ge 1$ the limit in
\eqref{spott} trivially exists and is $0$, and so is \eqref{spott2}.  If $k=0$, then
\eqref{spott} is $\mu$ and $M^{\fff}\w\mu=\1_Z\mu=\mu$ as well.  Thus (i) holds, 
and (ii)-(iv) are easily verified. 

We can thus assume that $\tau^*\fff$ does not vanish identically on $W$ and hence it  defines
a subvariety of positive codimension.  Then $M^{\fff}_0\w\mu=\1_Z\mu=0$ since
$\mu$ is irreducible,  cf.\ Lemma~\ref{gastub}.  
Thus we may assume that
$k\ge 1$ 
and (possibly after a modification of $W$) that $\tau^*\J$ is principal on $W$.  This precisely means that 
$\tau^*\fff=\fff^0 \fff'$, where $\fff^0$ is a section of the line bundle
$L_D\to X'$ that defines the exceptional divisor $D$ and $\fff'$ is a
non-vanishing section of $\tau^*E\otimes L_D^{-1}=\Hom(L_D,\tau^*E)$. 
Thus $\fff'$ defines an isomorphism
between $L_D$ and a line subbundle of $\tau^*E$, and so  $L_D$ inherits a metric from 
$\tau^*E$ such that  $|\fff^0|_{}=|\tau^*\fff|_{}$.
If we let 
\begin{equation}\label{babian}
\hat\omega=-\hat c_1(L_D),
\end{equation}
we have
by the Poincar\'e-Lelong  formula that
\begin{equation}\label{doris}
dd^c\log|\tau^* \fff|^2=[D]+\hat\omega.
\end{equation}
By \eqref{ingen}, 
\begin{equation}\label{postman1}
(dd^c\log(|\fff|^2+\epsilon))^k\w\mu=
\tau_*\big((dd^c\log(|\tau^*\fff|^2+\epsilon))^k\w\alpha\big).
\end{equation}
By \cite[(4.6)]{A2},
\begin{equation}\label{postman2}
(dd^c\log(|\tau^*\fff|^2+\epsilon))^k\w\alpha\to
(dd^c\log|\tau^*\fff|)^k\w\alpha=
([D]+\hat\omega)\w \hat\omega^{k-1}\w\alpha, \quad \epsilon\to 0,
\end{equation}
where the middle expression is recursively defined by \eqref{recur}. The equality is
a simple consequence.    
We conclude that the limit \eqref{spott} exists for each $k\ge 1$ and that
\begin{equation}\label{rot}
(dd^c\log|\fff|^2)^k\w\mu=\tau_*\big(([D]+\hat\omega)\w \hat\omega^{k-1}\w\alpha\big).
\end{equation}
This is  a generalized cycles with Zariski support contained in
$\tau(W)=|\mu|$, cf.~\eqref{venus}.  
Since $|D|=\tau^{-1}Z$ we have by  \eqref{linalg}  that
\begin{equation}\label{trott}
M^{\fff}_k\w\mu=\1_Z (dd^c\log |\fff|^2)^k\wedge \mu=\tau_*\big([D]\w \hat\omega^{k-1}\w\alpha\big).
\end{equation}
Clearly it is in $\GZ(X)$ and has Zariski support contained in $Z\cap|\mu|$.  
Thus (i) is proved.

If $\alpha=\beta\w\alpha'$ for some component $\beta$ of a $B$-form,  then 
$$
M^{\fff}_k\w\mu=\tau_*\big([D]\w \hat\omega^{k-1}\w\beta\w\alpha'\big),
$$
and hence $\sim 0$. Thus (ii) holds.

If $g$ is a section as in (iii), then
we may assume that also $\tau^*g=g^0g'$. Since $|g|\sim |\fff|$
it follows that $g^0$ and $\fff^0$ define the same divisor and hence are sections of
the same line bundle. Hence their associated first Chern forms differ by a $B$-form 
on $W$.   
In view of \eqref{trott} and \eqref{babian}, 
$M^{\fff}_k\w\mu\sim M^{g}_k\w\mu$ and thus (iii) follows.
Finally, we get (iv) from \eqref{postman1}, with $h$ instead of $\tau$,  and \eqref{ingen}. 
\end{proof}

With the notation in the proof we have, cf.\ \eqref{trott}, 
\begin{equation}\label{korven}
M_k^{\tau^*\fff}\w\alpha=[D]\w \hat\omega^{k-1}\w\alpha.
\end{equation}
Moreover, cf.\ \eqref{orvar}, by definition
\begin{equation}\label{orvar2}
M^{\fff}=M^{\fff}\w\1_X.
\end{equation}

\begin{proof}[Proof of Theorem~\ref{thm2}]
We can assume that $X$ is irreducible.  If $\fff$ vanishes identically, then $M^\fff_0=1$ and $M^\fff_k=0$
for $k\ge 1$, so $M^\fff$ coincides with $S(\J,X)$ in this case.  Thus we may assume that $\J$ has positive codimension,
and that $\tau\colon W\to X$ is a modification such that $\tau^*\fff$ is principal.  It then follows from
\eqref{segredef}, \eqref{babian}, and \eqref{trott} with $\alpha=1$  that 
$M^{\fff}_k$ is a representative for $S_k(\J,X)$. 
Thus Theorem~\ref{thm2} follows. 
\end{proof}

\begin{ex} \label{packad}
If the proper map $\tau\colon W\to X$ is surjective and
  generically $m$-to-$1$, then $\tau_*\1_W=m\1_X$ and so
$
m M^{\fff}=\tau_* M^{\tau^*\fff}.
$
\end{ex}

\begin{remark}\label{tellus}
Assume that  $i\colon V\hookrightarrow X$ is a subvariety of pure codimension $p$.
By the projection formula, Theorem~\ref{putig} $(iv)$,
\begin{equation}\label{soting}
M_k^{\fff}\w [V]=i_*M^{i^*\fff}_k, \quad k=0,1,2,\ldots. 
\end{equation}
Notice that  the Segre class $S(i^*\J,V)$ on $V$ for $i^*\J\to V$ is represented by the generalized
cycle $M^{i^*\fff}$, cf.~Theorem~\ref{thm2}. With the identification given by Proposition~\ref{nixon} of elements in $\B(V)$ 
with elements in $\B(X)$ with Zariski support on $V$, thus the right hand side of 
 \eqref{soting} is a representative of $S(i^*\J,V)$. 
 {\it Warning:} The left hand side of \eqref{soting}  {\it is not} a product but an operator
acting on $[V]$.  In general one cannot recover $M^\fff$ from $i_*M^{i^*\fff}$, or
$S(\J,X)$ from  $S(i^*\J,V)$ even if $Z\subset V$. 
For instance, if $\J$ defines a regular embedding and $Z=V$,  then  
$i_*S(i^*\J,V)=[V]$ whereas $S(\J,X)=[V]\w s(N_\J X)$, see Proposition~\ref{gata1}. 
\end{remark} 

In view of \eqref{soting} the following definition is natural.  
 
\begin{df}
Assume that $\J\to X$ is defined by the section $\fff$ of the Hermitian
vector bundle $E\to X$. Given $\mu\in\B_p(X)$ and a representative
$\hat\mu\in\GZ_p(X)$, we define the $\B$-Segre class
$S_{k}(\J,\mu)$ as the class in $\B_{p-k}(Z\cap|\mu|)$ defined by $M_{k}^{\fff}\w\hat\mu$. 
We let $S(\J,\mu)=S_0(\J,\mu)+S_1(\J,\mu)+\cdots +S_p(\J,\mu)$. 
\end{df}

\begin{prop}\label{nylikhet}  
If $\alpha$ is a component of a Chern or Segre form,  then
\begin{equation}\label{samba1}
\1_Z (\alpha \w \mu)=\alpha \w \1_Z\mu, \quad \mu\in\GZ(X),
\end{equation}
and
\begin{equation}\label{samba2} 
M^{\fff}_k\w (\alpha\w\mu)=\alpha\w M_k^{\fff}\w\mu,\quad k=0,1,2, \ldots.
\end{equation}
\end{prop}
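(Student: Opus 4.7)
The plan is to reduce, by linearity, to the case $\mu = \tau_*\beta$ with $\tau\colon W\to X$ proper, $W$ smooth and connected, and $\beta$ a product of components of Chern forms, and then verify both equalities by explicit computation in this normal form.

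For \eqref{samba1}, the main tool is \eqref{linalg} together with the observation that smoothness lets us pass $\1_V$ through a wedge with a smooth form when $W$ is connected. Namely, since $\alpha\w\mu = \tau_*(\tau^*\alpha\w\beta)$ by \eqref{ingen}, applying \eqref{linalg} yields
\[
\1_Z(\alpha\w\mu) = \tau_*\bigl(\1_{\tau^{-1}Z}(\tau^*\alpha\w\beta)\bigr).
\]
Because $W$ is connected, $\tau^{-1}Z$ is either all of $W$ (and $\1_{\tau^{-1}Z}$ acts as the identity) or a proper analytic subset (and it annihilates any smooth form). In either case $\1_{\tau^{-1}Z}(\tau^*\alpha\w\beta) = \tau^*\alpha\w\1_{\tau^{-1}Z}\beta$, and a second application of \eqref{ingen} and \eqref{linalg} gives $\1_Z(\alpha\w\mu) = \alpha\w\1_Z\mu$.

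For \eqref{samba2}, the cleanest route is to use the explicit formula \eqref{trott} derived in the proof of Theorem~\ref{putig}. After possibly composing $\tau$ with a further modification we may assume $\tau^*\fff$ is principal with exceptional divisor $D$ and line bundle $L_D$; setting $\hat\omega = -\hat c_1(L_D)$, one has $M_k^\fff\w\mu = \tau_*\bigl([D]\w\hat\omega^{k-1}\w\beta\bigr)$ for $k\geq 1$. Since $\tau^*\alpha\w\beta$ is again a product of Chern form components, the identical derivation applied to the generalized cycle $\alpha\w\mu = \tau_*(\tau^*\alpha\w\beta)$ produces
\[
M_k^\fff\w(\alpha\w\mu) = \tau_*\bigl([D]\w\hat\omega^{k-1}\w\tau^*\alpha\w\beta\bigr).
\]
Moving the smooth form $\tau^*\alpha$ past $\hat\omega^{k-1}$ and the order-zero current $[D]$, and then using \eqref{ingen}, yields exactly $\alpha\w M_k^\fff\w\mu$. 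The case $k=0$ is precisely \eqref{samba1}, and the degenerate situation $|\mu|\subset Z$ where $\tau^*\fff$ vanishes identically is handled trivially, since $M_k^\fff\w\mu = 0$ and $M_k^\fff\w(\alpha\w\mu)=0$ for $k\geq 1$ by the proof of Theorem~\ref{putig}.

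There is no real obstacle here: the decomposition $\mu = \tau_*\beta$ reduces everything to the commutation of a smooth form with $\tau_*$ and with the characteristic-function operator $\1_V$, both of which are routine. The only mild point to check is that the representation \eqref{trott} persists when $\beta$ is enlarged to $\tau^*\alpha\w\beta$; this is immediate because its derivation in the proof of Theorem~\ref{putig} only uses that $\beta$ is smooth and not its specific form.
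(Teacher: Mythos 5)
Your proof of \eqref{samba1} is exactly the paper's: reduce to $\mu=\tau_*\beta$ with $W$ connected, note that $\1_{\tau^{-1}Z}(\tau^*\alpha\w\beta)=\tau^*\alpha\w\1_{\tau^{-1}Z}\beta$ because both sides either vanish (when $\tau^{-1}Z$ is a proper subvariety, since the form is smooth) or equal $\tau^*\alpha\w\beta$, and conclude by \eqref{ingen} and \eqref{linalg}.

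For \eqref{samba2} you take a mildly different route. The paper simply observes that \eqref{samba2} follows from \eqref{spott}, \eqref{spott2} and \eqref{samba1}: the regularizations $(dd^c\log(|\fff|^2+\epsilon))^k$ are smooth forms, so they commute with the wedge by $\alpha$ for each $\epsilon>0$; passing to the limit and then applying \eqref{samba1} to the resulting generalized cycle gives the claim, with no need to revisit the principalization. You instead rerun the derivation of \eqref{trott} with $\beta$ replaced by $\tau^*\alpha\w\beta$ and commute $\tau^*\alpha$ past $[D]\w\hat\omega^{k-1}$. Both arguments are correct; your justification that \eqref{trott} persists for the enlarged form is right, since $\tau^*\alpha$ is again a component of a Chern form by \eqref{orgie2} and the derivation only uses smoothness. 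What the paper's route buys is brevity and independence from the explicit blow-up formula; what yours buys is a completely explicit expression for $M_k^\fff\w(\alpha\w\mu)$ that makes the commutation visibly a matter of reordering even-degree forms. You also handle the degenerate case $|\mu|\subset Z$ and the case $k=0$ correctly.
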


\begin{proof} 
Assume that $\mu=\tau_* a$ and $W$ is connected. Let $\xi=\tau^*\alpha$.
Now 
$\1_{\tau^{-1}Z}(\xi \w a)=\xi \w \1_{\tau^{-1}Z} a$ since both sides
vanish if $\tau^{-1}Z$ is a proper subvariety of $W$ and are equal to
$\xi\w a$ otherwise.  Thus \eqref{samba1} follows from
\eqref{ingen} and \eqref{linalg}, and 
 \eqref{samba2} follows from 
 \eqref{spott}, \eqref{spott2} and \eqref{samba1}.
 \end{proof}

Sometimes it is convenient with a limit procedure that directly gives 
$M_k^{\fff}\w\mu$ without first computing $(dd^c\log|\fff|^2)^k\w\mu$.

\begin{prop}\label{struts}
Let $\fff$ be a holomorphic section of a Hermitian bundle $E\to X$ and let
$$ 
M_{k,\epsilon}^{\fff}=\frac{\epsilon}{(|\fff|^2+\epsilon)^{k+1}}
(dd^c|\fff|^2)^k, \quad k=0,1,2,\ldots.
$$ %
If $\mu\in\GZ(X)$, then for $k\ge 0$, 
\begin{equation}\label{epsilon}
M_k^{\fff}\w\mu=\lim_{\epsilon\to 0} M_{k,\epsilon}^{\fff}\w\mu,
\quad k=0,1,2,\ldots.
\end{equation}
\end{prop}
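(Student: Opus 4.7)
The plan is to reduce to a resolved setting and then establish a precise algebraic identity on $W$ linking $M_{k,\epsilon}^\fff$ and $(dd^c\log(|\fff|^2+\epsilon))^k$. By linearity we may assume $\mu=\tau_*\alpha$ with $\tau\colon W\to X$ proper, $W$ smooth and connected, and $\alpha$ a product of Chern forms, as in the proof of Theorem~\ref{putig}. The case $\tau^*\fff\equiv 0$ is immediate: $M_{k,\epsilon}^{\tau^*\fff}=0$ for $k\ge 1$ and equals $1$ for $k=0$, matching $M_k^\fff\w\mu$. Otherwise, after a further modification we may assume $\tau^*\fff=\fff^0\fff'$ with $\fff^0$ a section of $L_D$ and $|\fff^0|=|\tau^*\fff|$; set $\hat\omega=-\hat c_1(L_D)$, so that $dd^c\log|\tau^*\fff|^2=[D]+\hat\omega$ by \eqref{doris}.

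The key identity I would establish on $W$ is
\begin{equation}\label{keyident}
(dd^c\log(|\tau^*\fff|^2+\epsilon))^k \;=\; M_{k,\epsilon}^{\tau^*\fff} \;+\; \Bigl(\frac{|\tau^*\fff|^2}{|\tau^*\fff|^2+\epsilon}\Bigr)^{k+1}\hat\omega^k.
\end{equation}
For smooth $f>0$, the elementary computation gives $dd^c\log(f+\epsilon)=(f+\epsilon)^{-2}\bigl((f+\epsilon)\,dd^c f-df\w d^c f\bigr)$; since $(df\w d^c f)^j=0$ for $j\ge 2$, raising to the $k$th power leaves only the linear term in $df\w d^c f$, and isolating the $\epsilon(dd^c f)^k$-contribution in the numerator yields $M_{k,\epsilon}^f$, while the remainder equals $(f/(f+\epsilon))^{k+1}(dd^c\log f)^k$. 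Applied to $f=|\tau^*\fff|^2$ (where $dd^c\log f=\hat\omega$ on $W\setminus D$), this yields \eqref{keyident} on $W\setminus D$. Both sides are smooth on all of $W$, so continuity extends the identity across $D$, which has positive codimension.

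I would then wedge \eqref{keyident} with $\alpha$, apply $\tau_*$, and send $\epsilon\to 0$. Since $(dd^c\log(|\fff|^2+\epsilon))^k$ and $M_{k,\epsilon}^\fff$ are smooth, the projection formula gives $\tau_*\bigl((dd^c\log(|\tau^*\fff|^2+\epsilon))^k\w\alpha\bigr)=(dd^c\log(|\fff|^2+\epsilon))^k\w\mu$ and $\tau_*(M_{k,\epsilon}^{\tau^*\fff}\w\alpha)=M_{k,\epsilon}^\fff\w\mu$. By \eqref{postman1}--\eqref{postman2} the former tends to $\tau_*\bigl(([D]+\hat\omega)\w\hat\omega^{k-1}\w\alpha\bigr)=M_k^\fff\w\mu+\tau_*(\hat\omega^k\w\alpha)$, cf.\ \eqref{trott}. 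For the last term of \eqref{keyident}, the multiplier $(|\tau^*\fff|^2/(|\tau^*\fff|^2+\epsilon))^{k+1}$ is bounded by $1$ and converges pointwise to $\1_{W\setminus D}$; since $\hat\omega^k\w\alpha$ is smooth and $D$ has measure zero, dominated convergence combined with continuity of $\tau_*$ forces $\tau_*\bigl((\,\cdot\,)^{k+1}\hat\omega^k\w\alpha\bigr)\to\tau_*(\hat\omega^k\w\alpha)$. The two $\tau_*(\hat\omega^k\w\alpha)$-contributions cancel and $M_{k,\epsilon}^\fff\w\mu\to M_k^\fff\w\mu$ follows. The delicate point is \eqref{keyident}: the discrepancy between $(dd^c\log(|\tau^*\fff|^2+\epsilon))^k$ and $M_{k,\epsilon}^{\tau^*\fff}$ must involve only the smooth $\hat\omega^k$ with no $[D]$-contribution, for otherwise the dominated-convergence step would be illegitimate.
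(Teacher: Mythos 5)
Your proof is correct. You and the paper both begin with the same reduction: write $\mu=\tau_*\alpha$, principalize $\tau^*\J$ so that $\tau^*\fff=\fff^0\fff'$ with $|\fff^0|=|\tau^*\fff|$, and push everything forward from $W$. Where you diverge is in how the key limit on $W$ is handled. The paper isolates a stand-alone lemma, namely that $\epsilon(|s|^2+\epsilon)^{-(k+1)}(dd^c|s|^2)^k\to[D]\w\hat\omega^{k-1}$ for a section $s$ of a line bundle, and disposes of it by saying it can be verified ``along the same lines as \cite[Proposition~4.4]{A2}'', omitting the computation. You instead prove the exact pointwise identity
$(dd^c\log(f+\epsilon))^k=M_{k,\epsilon}^{f}+\bigl(f/(f+\epsilon)\bigr)^{k+1}\hat\omega^k$
with $f=|\fff^0|^2$ (which I have checked: the binomial expansion kills all terms with $(df\w d^cf)^j$, $j\ge2$, and the $j=0,1$ terms recombine exactly as you claim, on $W\setminus D$ and hence everywhere by smoothness of both sides), and then you let the already-established convergence \eqref{postman1}--\eqref{postman2} of the $\log(|\fff|^2+\epsilon)$-regularization do the analytic work, with the residual term controlled by dominated convergence against the smooth form $\hat\omega^k\w\alpha$. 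What your route buys is self-containedness: given Theorem~\ref{putig}(i), no further appeal to the methods of \cite{A2} is needed, and your identity in fact yields the paper's omitted lemma as a corollary (take $\alpha=1$ and subtract $\hat\omega^k$ from $([D]+\hat\omega)\w\hat\omega^{k-1}$). What the paper's route buys is brevity and a reusable lemma stated independently of the $\log$-regularization. Your closing remark correctly identifies the one point that must not be fudged: the discrepancy between the two regularizations must be a smooth multiple of $\hat\omega^k$ carrying no mass on $D$, and your exact identity establishes precisely that.
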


Using a principalization, the proposition is reduced to 
the following lemma that can be verified
along the same lines as \cite[Proposition~4.4]{A2}, and we omit
the details.

\begin{lma}
Let $s$ be a section of a Hermitian line bundle $L\to W$ with
$\div s=D$ and let $\hat\omega=-\hat c_1(L)$. Then
$$
\frac{\epsilon}{(|s|^2+\epsilon)^{k+1}}(dd^c|s|^2)^{k}\to
[D]\w \hat\omega^{k-1}, \quad k\ge1.
$$
\end{lma}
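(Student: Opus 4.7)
The plan is to reduce the assertion to a local computation in a coordinate chart adapted to $D$ and then to extract the limit by a rescaling $z_1\mapsto \sqrt{\epsilon}\,w_1$ combined with dominated convergence, in the spirit of \cite[Proposition~4.4]{A2}.

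Since the convergence is local on $W$, by a partition of unity and, if necessary, a further log-resolution of $|D|$, the claim reduces to the following model situation: a coordinate chart $U\subset W$ with coordinates $(z_1,z')=(z_1,z_2,\ldots,z_n)$ on which $L$ is trivialized and $s$ is represented by the holomorphic function $z_1$, the Hermitian metric having the form $|s|^2=|z_1|^2 e^{-\phi}$ for a smooth real weight $\phi$. Then $D\cap U=\{z_1=0\}$ and the Poincar\'e-Lelong computation gives $\hat\omega|_U=-dd^c\phi$, so the claim becomes
\[
\frac{\epsilon\,(dd^c(|z_1|^2 e^{-\phi}))^k}{(|z_1|^2 e^{-\phi}+\epsilon)^{k+1}} \longrightarrow [\{z_1=0\}]\w(-dd^c\phi)^{k-1}
\]
as currents on $U$.

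Test against a compactly supported smooth $(n-k,n-k)$-form $\psi$ on $U$ and substitute $z_1=\sqrt{\epsilon}\,w_1$ in the resulting integral. Under this substitution $|z_1|^2=\epsilon|w_1|^2$, $d|z_1|^2=\epsilon\,d|w_1|^2$, and $dd^c|z_1|^2=\epsilon\,dd^c|w_1|^2$, while the denominator becomes $\epsilon^{k+1}(1+|w_1|^2 e^{-\phi})^{k+1}$ and the Jacobian contributes a further factor $\epsilon$ through $dz_1\w d\bar z_1 = \epsilon\, dw_1\w d\bar w_1$. Expanding $(dd^c(|z_1|^2 e^{-\phi}))^k$ by the product rule and noting that every differentiation of $\phi$ in the $z_1$-direction picks up an extra factor $\sqrt{\epsilon}$, the $\epsilon$-powers in the leading order of the numerator exactly cancel those of the denominator and the Jacobian. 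Only the summands with exactly one ``normal'' factor (containing $dz_1\w d\bar z_1$) and $k-1$ purely ``tangential'' factors (containing only $dz_i\w d\bar z_j$ with $i,j\ge 2$) contribute in the limit; all other terms carry at least one additional $\sqrt{\epsilon}$ and vanish.

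Dominated convergence, with majorant $C(1+|w_1|^2)^{-(k+1)}$ times a smooth compactly-supported function of $z'$, justifies passing to the limit. The $w_1$-integration is governed by the one-dimensional identity $\lim_{\epsilon\to 0}\int_\C\epsilon\,dd^c|w|^2/(|w|^2+\epsilon)^2=1$, which together with the $k-1$ surviving tangential factors evaluates to $(-dd^c\phi(0,z'))^{k-1}$ on the slice $\{z_1=0\}$. Summing the contributions yields $\langle [\{z_1=0\}]\w(-dd^c\phi)^{k-1},\psi\rangle=\langle [D]\w\hat\omega^{k-1},\psi\rangle$, as required.

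The main obstacle I expect is the combinatorial bookkeeping in the expansion of $(dd^c(|z_1|^2 e^{-\phi}))^k$: one must assign the correct power of $\epsilon$ to each resulting summand after the rescaling, verify that the weight $(|s|^2+\epsilon)^{k+1}$ is tuned precisely so that exactly the ``one normal, $k-1$ tangential'' configuration contributes a finite nonzero limit, and check that all other terms decay. Once the scaling count is set up, the remainder is routine and parallels the argument in \cite[Proposition~4.4]{A2}.
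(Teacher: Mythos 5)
The paper does not actually prove this lemma: Proposition~\ref{struts} is reduced to it by principalization and the lemma itself is dispatched with ``can be verified along the same lines as \cite[Proposition~4.4]{A2}, and we omit the details.'' So your attempt must stand on its own, and as written it has two genuine gaps. First, the reduction to the model $s=z_1$ is not available: a log-resolution of $|D|$ puts $s$ locally in the form $z_1^{a_1}\cdots z_p^{a_p}$ times a unit, not a single reduced coordinate, and the one-variable rescaling $z_1=\sqrt{\epsilon}\,w_1$ does not handle multiplicities $a_i\ge 2$ or the crossing strata; this is exactly the situation that arises in the application, where $s=\fff^0$ cuts out an exceptional divisor.

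Second, and more seriously, the $\sqrt{\epsilon}$-bookkeeping you describe gives the wrong answer even when $s=z_1$. Writing $v=|z_1|^2e^{-\phi}$, the expansion of $dd^cv$ contains, besides $\frac{i}{2\pi}e^{-\phi}dz_1\wedge d\bar z_1$ and $-ve^{\phi}\cdot e^{-\phi}dd^c\phi$, the cross terms $-\frac{i}{2\pi}e^{-\phi}(\bar z_1 dz_1\wedge\bar\partial\phi+z_1\partial\phi\wedge d\bar z_1)$ and the term $\frac{i}{2\pi}e^{-\phi}|z_1|^2\partial\phi\wedge\bar\partial\phi$. After rescaling, the product of two cross terms is of exactly the same order as (one normal factor)$\wedge(|z_1|^2\partial\phi\wedge\bar\partial\phi)$ --- neither carries an extra $\sqrt{\epsilon}$ --- and these two contributions cancel identically; this cancellation is what removes the spurious $\partial\phi\wedge\bar\partial\phi$ terms and leaves $\hat\omega^{k-1}=(-dd^c\phi)^{k-1}$. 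Your rule ``keep one normal and $k-1$ tangential factors, everything else is $O(\sqrt{\epsilon})$'' either discards one of these or keeps both as separate contributions, and in either case does not produce $\hat\omega^{k-1}$. Relatedly, the governing one-variable integral for $k\ge 2$ is $\int_\C k\epsilon|w|^{2(k-1)}(|w|^2+\epsilon)^{-(k+1)}\tfrac{i}{2\pi}dw\wedge d\bar w=1$, with the weight $|w|^{2(k-1)}$ coming from the $k-1$ tangential factors, not the $k=1$ integral you cite. The clean way to organize all of this --- and the route \cite{A2} takes --- is to use that $s$ is a section of a line bundle, so that $d|s|^2\wedge d^c|s|^2=|s|^2dd^c|s|^2-|s|^4\hat c_1(L)$ off $D$ and hence $(dd^c|s|^2)^k=k|s|^{2(k-1)}dd^c|s|^2\wedge\hat\omega^{k-1}-(k-1)|s|^{2k}\hat\omega^k$; this collapses the combinatorics, isolates the factor $\hat\omega^{k-1}$ structurally, and reduces the lemma to a scalar limit that can then be checked on a normal-crossings model one branch at a time.
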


\begin{remark}\label{geller}
One can define $M^{\fff}\w\mu$  
as the value at $\lambda=0$, via analytic continuation from $\Re\lambda\gg 0$,
of the expression
$$
M^{\fff,\lambda}\w\mu=\Big(1-|\fff|^{2\lambda}+\sum_{k\ge1}\dbar|\fff|^{2\lambda}\w\frac{\partial|\fff|^2}{2\pi i|\fff|^2}
\w(dd^c\log|\fff|^2)^{k-1}\Big)\w\mu,
$$
see \cite[Proposition~4.1]{A2} and \cite[Section~4]{aswy}.  
\end{remark}

\begin{remark}[Comparison to Green forms]
Recall that a $(p-1,p-1)$-current $g$ is a Green current of a closed subvariety $Z$
of codimension $p$ of a complex manifold $X$ 
if $dd^c g+[Z]=\omega$, where $\omega$ is a smooth form.  If $g$ is smooth outside $Z$ it is called a Green form. 
The calculus of Green forms, based on the $*$-product, is an important tool
in the study of height in arithmetic intersection theory, see, e.g., \cite{BGS,GS}. 
In particular, Fulton's intersection theory is recovered in the proper intersection case.

In the case $p=1$, if $s$ is a section of a Hermitian line bundle that defines $Z$, then $g=-\log|s|^2$ is a Green form
in virtue of the Poincar\'e-Lelong formula \eqref{pl}. 
In fact, these are the only Green forms in the case $p=1$.  
The existence of Green forms of so-called logarithmic type for $p>1$ is a more
delicate matter, see \cite{BGS}. That $g$ is of logarithmic type means that
$g=\tau_*g'$ under a proper mapping $\tau\colon W\to X$ such that locally in $W$ there are coordinates
$z$ such that $g'=\sum_k a_k \log|z_k|^2 + a_0$, where
$a_j$ are smooth and closed.  This can be compared to our definition of generalized cycles.

If $Z$ is defined by the section $\sigma$ of the Hermitian vector bundle $E\to X$,
and $\gamma:=-\log|\sigma|^2(dd^c\log|\sigma|^2)^{p-1}$, then, cf.\ \eqref{recur} and Corollary~\ref{snabela}, 
$$
dd^c \gamma= -(dd^c|\sigma|^2)^{p}=-\1_Z(dd^c|\sigma|^2)^{p}-\1_{X\setminus Z}(dd^c|\sigma|^2)^{p}=
-[Z]-\1_{X\setminus Z}(dd^c|\sigma|^2)^{p},
$$
so that $\gamma$ is kind of a Green form. Unless $p=1$, however, $-\1_{X\setminus Z}(dd^c|\sigma|^2)^{p}$ 
is not smooth but only
the push-forward under a modification of a smooth form, cf.\ \eqref{rot}.  
\end{remark}

\section{Multiplicities of a generalized cycle}
In view of \eqref{imam} it is natural to define
the multiplicity of $\mu\in\GZ_k(X)$ at $x\in X$ as the Lelong
number at $x$. However, $\mu$ is not necessarily positive so it is not immediately 
clear that the Lelong number exists.  Here is our
formal definition:
Let $\fff_x$ be a section of a Hermitian vector bundle 
in an open \nbh $U$ of $x$ such that 
$\fff_x$ generates  the maximal ideal $m_x$ at $x$. Since $M^{\fff_x}\w\mu$ has support
at $x$ it follows from the dimension principle,
Proposition~\ref{kvick},  that $M^{\fff_x}\w\mu=M^{\fff_x}_k\w\mu$. 
Moreover, in view of the proof of this proposition,
$M^{\fff_x}_k\w\mu=a[x]$ for some real number $a$. By Theorem~\ref{putig}~(iii), the number $a$ 
is independent of the choice of ${\fff_x}$. Part (ii) of the same theorem implies that $a$ only depends
on the class of $\mu$ in $\B(U)$.
By an argument as in the beginning of 
Section~\ref{segreclass} we see that it is also independent of the choice of neighborhood $U$ of $x$.
Altogether the definition
\begin{equation}\label{plutt}
\mult_x \mu=\int M^{{\fff_x}}\w\mu
\end{equation}
is meaningful.  
If $U$ is small enough we can assume that $E$ is trivial, with a trivial 
metric, and then $\mult_x\mu$ coincides with the Lelong number $\ell_x\mu$ 
if $\mu$ is  positive,  see, e.g., \cite[Lemma~2.1]{aswy}
and Remark~\ref{geller}.

\begin{prop}\label{gatsten} 
The multiplicity of $\mu\in\GZ_k(X)$ at $x$ is an integer and it 
only depends on its class in $\B_k(X)$.
\end{prop}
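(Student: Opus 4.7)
The argument has two parts: that $\mult_x\mu\in\Z$, and that $\mult_x\mu$ depends only on the class of $\mu$ in $\B_k(X)$. The plan is to establish the integrality first and then deduce class-independence from it.

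For the integrality, since the definition of $\mult_x$ is local, I would work in a small neighborhood $U$ of $x$ on which $\fff_x$ is defined and on which $\mu$ admits a finite expansion $\mu=\sum_j(\tau_j)_*\alpha_j$ with $\tau_j\colon W_j\to U$ proper, $W_j$ smooth and connected, and $\alpha_j$ a product of Chern forms. By linearity it suffices to treat a single term $\mu=\tau_*\alpha$. The projection formula (Theorem~\ref{putig}(iv)) gives
\[
\mult_x\mu \;=\; \int_W M^{\tau^*\fff_x}_k\w\alpha,
\]
and after composing $\tau$ with a suitable modification (using the projection formula a second time to preserve the integral) I may assume that $\tau^*\fff_x=\fff_x^0\fff_x'$ with $\fff_x^0$ a section of a line bundle $L_D$ cutting out an effective divisor $D\subset W$ and $\fff_x'$ non-vanishing, exactly as in the proof of Theorem~\ref{putig}. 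Formula \eqref{korven} then yields
\[
\mult_x\mu \;=\; \int_W[D]\w\hat\omega^{k-1}\w\alpha \;=\; \int_D\hat\omega^{k-1}\w\alpha,
\]
where $\hat\omega=-\hat c_1(L_D)$, and the divisor $D\subset\tau^{-1}(x)$ is compact since $\tau$ is proper.

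The key step is to recognize this integral as an integer. Writing $D=\sum_i n_i D_i$ as a sum of prime divisors with $n_i\in\Z$, and choosing a resolution $\sigma_i\colon\tilde D_i\to D_i$ of each, I would apply Remark~\ref{kola} to $\sigma_i^*\alpha$ to produce a projectivization $\pi_i\colon\tilde D_i'\to\tilde D_i$ on which $\sigma_i^*\alpha$ is the push-forward of a product of first Chern forms of Hermitian line bundles; pulling $\sigma_i^*\hat\omega^{k-1}$ back to $\tilde D_i'$ via \eqref{orgie2}, the integral $\int_{D_i}\hat\omega^{k-1}\w\alpha$ becomes a top-dimensional integral of a product of first Chern forms of holomorphic line bundles on the compact smooth manifold $\tilde D_i'$. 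By \eqref{mor} together with the Poincar\'e-Lelong formula this is an intersection number of divisors on $\tilde D_i'$, hence an integer. The main technical obstacle is the bookkeeping needed to ensure that the Remark~\ref{kola} reduction and the resolution commute with the pull-back and push-forward operations; this is essentially the content of Remark~\ref{kola} itself.

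Class-independence now follows quickly: if $\mu\sim\mu'$ in $\GZ_k(X)$, then by Theorem~\ref{putig}(ii) we have $M^{\fff_x}\w(\mu-\mu')\sim 0$ in $\GZ(U)$. By the integrality just proved, this current equals $a[x]$ with $a=\mult_x\mu-\mult_x\mu'\in\Z$, which is a genuine zero-cycle on $U$. Proposition~\ref{svabb} then forces $a[x]=0$, i.e., $\mult_x\mu=\mult_x\mu'$, completing the argument.
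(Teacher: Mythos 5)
Your argument follows the paper's own proof of Proposition~\ref{gatsten} quite closely: both reduce to a single term $\mu=\tau_*\alpha$, principalize $\tau^*\fff_x$ after a further modification, identify $\mult_x\mu$ with $\sum_i n_i\int_{D_i}\hat\omega^{k-1}\w\alpha$ over the compact components of the exceptional divisor, and recognize each such integral as an intersection number (your detour through a resolution of $D_i$ and Remark~\ref{kola} just makes explicit what the paper leaves implicit). Your class-independence step --- integrality, then Theorem~\ref{putig}(ii), then Proposition~\ref{svabb} applied to the zero-cycle $a[x]$ --- is a correct and slightly more explicit rendering of what the paper asserts in the paragraph defining $\mult_x$.

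There is one case your reduction does not cover: terms $\tau_*\alpha$ with $\tau(W)\subset\{x\}$, i.e.\ components of $\mu$ whose Zariski support is the point $x$ itself (by the dimension principle such a term is nonzero only when $k=0$, e.g.\ $\mu=[x]$). For such a term $\tau^*\fff_x$ vanishes identically on $W$, so no modification produces a factorization $\tau^*\fff_x=\fff_x^0\fff_x'$ with a nonzero exceptional divisor, and the passage to formula \eqref{korven} breaks down. The paper treats this case first and separately: there $M^{\fff_x}\w\mu=\mu$ and $\mult_x\mu=\int_W\alpha$, which is again an intersection number on the compact manifold $W$, hence an integer. With that case added your proof is complete. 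A very minor further point: to see that the integral over a compact manifold of a product of first Chern forms of line bundles is an integer, one should appeal to the integrality of the classes $c_1(L_j)$ in $H^2(\cdot,\Z)$ rather than to the Poincar\'e--Lelong formula, since the $L_j$ need not admit nontrivial meromorphic sections.
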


\begin{proof}  
Let  $\mu=\tau_*\alpha$, where $\tau\colon W\to X$ is proper
and $W$ is connected. First assume that $\tau(W)=\{x\}$.
 Thus  $M^{\fff_x}\w\mu=\mu$.   Since $\tau$ is proper,
$W$ is compact, so by \eqref{plutt},  
$$
\mult_x\mu=\int\mu=\int_W\alpha
$$
which is an intersection number, cf.~Remark~\ref{hotto}, and hence an integer.  
Next we assume that $x\in \tau(W)$ and that $\tau(W)$ has positive dimension.  
As in the proof of
Theorem~\ref{putig},  with $\fff={\fff_x}$ and $X=U$, cf.~\eqref{trott} and \eqref{plutt},
we can assume that 
$$
M^{\fff_x}\w\mu=\sum_{k\ge 1}\tau_*\big([D]\w\hat\omega^{k-1}\w\alpha\big).
$$
Only the term with $k=\dim\mu$ can give a contribution and
$$
\mult_x \mu=\int M^{\fff_x}_k\w\mu=\int \tau_*\big([D]\w\hat\omega^{k-1}\w\alpha\big).
$$
Writing $D=a_1 D_1 + a_2 D_2 +\cdots $, where $D_j$ are irreducible and compact,
we therefore have that  
$$
\mult_x\mu= a_1 \int_{D_1}\hat\omega^{k-1}\w \alpha +
\cdots
$$
and hence an integer, since each integral is an intersection number. 
\end{proof}

Assume that $\mu$ is irreducible. If it has dimension $0$ and is moving, i.e., $\dim |\mu|>0$, then 
$\mult_x\mu=0$ at each point. In fact,  
$(\mult_x\mu) [x]=\1_x\mu=0$ by the definition of irreducibility.
However, as is illustrated by 
Example~\ref{planmedel}, if $\mu$ has positive dimension, $\mult_x\mu$ can be nonzero at
certain points even if $\mu$ is moving.

\begin{proof}[Proof of Theorem~\ref{genking}]
It is well-known that the blow-up $\pi\colon Bl_\J X\to X$ of $X$ along $\J$ only depends on the 
integral closure class of $\J$.  Since $S(\J,X)$ is defined just in terms of the blow-up,
cf.~\eqref{segredef}, it 
only depends on the integral closure class of $\J$.  

By definition the distinguished varieties are precisely the
sets $\pi(D_j)$ where $D_j$ are the irreducible components of the
exceptional divisor $D$ of the blow-up, see, e.g., \cite{Laz} or \cite{aswy}. 

The remaining statements of Theorem~\ref{genking} are purely local and can be verified in the
following way: 
Fix a point $x \in X$. In a suitable \nbh $U$ 
of $x$ there is a section $\fff$ of a trivial vector bundle $E\to U$
 that generates $\J$ there.   
By Proposition~\ref{restriktion},
$\mult_x S_k(\J,X)=\mult_x S_k(\J|_U, U)$. If we choose  a trivial
metric, then $M^{\fff}_k$ coincides with $M^\fff_k$ defined in
\cite{aswy}, and from \cite[Theorem~1.1]{aswy} we have that
$\mult_xM^{\fff}_k=e_k(\J,x)$.  
Because of the uniqueness of the decomposition \eqref{deko}
in fixed and moving components applied to
$S(\J,X)$ all the statements now follows from \cite[Theorem~1.1]{aswy}. 
 \end{proof}

We have the following consequence of Proposition~\ref{nylikhet}.

\begin{lma}\label{spe}
If $\mu\in\B_k(X)$ and $\gamma$ is a component of a Chern or Segre form of positive bidegree,  then
$
\mult_x(\gamma\w\mu)=0
$
for each  $x$.
\end{lma}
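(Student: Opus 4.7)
The plan is to use the defining formula \eqref{plutt} for multiplicity together with the commutation identity \eqref{samba2} in Proposition~\ref{nylikhet} and the dimension principle (Proposition~\ref{kvick}).

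First I would pick a representative $\hat\mu \in \GZ_k(X)$ of the class $\mu$, which is legitimate by Proposition~\ref{gatsten}, and let $\ell \ge 1$ denote the bidegree type $(\ell,\ell)$ of $\gamma$, so that $\gamma \w \hat\mu$ is a generalized cycle of dimension $k-\ell$. Then by \eqref{plutt} applied to $\gamma\w\hat\mu$,
\[
\mult_x(\gamma\w\mu)=\int M^{\fff_x}\w (\gamma\w\hat\mu),
\]
where $\fff_x$ is a section in a neighborhood of $x$ generating the maximal ideal $m_x$.

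Next I would observe that $M^{\fff_x}\w (\gamma\w\hat\mu)$ has support contained in $\{x\}$, so by the dimension principle the only component that can contribute is the $0$-dimensional piece $M^{\fff_x}_{k-\ell}\w (\gamma\w\hat\mu)$. Applying the commutation formula \eqref{samba2},
\[
M^{\fff_x}_{k-\ell}\w (\gamma\w\hat\mu)=\gamma\w \bigl(M^{\fff_x}_{k-\ell}\w\hat\mu\bigr).
\]

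The key step is then to note that $M^{\fff_x}_{k-\ell}\w\hat\mu$ is a generalized cycle of \emph{dimension} $k-(k-\ell)=\ell\ge 1$ whose Zariski support lies in $\{x\}$, a set of dimension $0<\ell$. The dimension principle Proposition~\ref{kvick} forces $M^{\fff_x}_{k-\ell}\w\hat\mu = 0$, hence $\gamma\w M^{\fff_x}_{k-\ell}\w\hat\mu=0$, and therefore $\mult_x(\gamma\w\mu)=0$. I do not anticipate a real obstacle here; the whole argument is a bookkeeping of bidegrees combined with the dimension principle, and the only thing to be careful about is that $\ell\ge 1$ is exactly what makes the support $\{x\}$ too small for $M^{\fff_x}_{k-\ell}\w\hat\mu$ to be nonzero.
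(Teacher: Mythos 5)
Your proof is correct. Both you and the paper hinge on the commutation identity \eqref{samba2} of Proposition~\ref{nylikhet} to move $\gamma$ past $M^{\fff_x}$, but you close the argument differently: the paper writes $\gamma=dd^c g$ locally (using that a component of a Chern or Segre form of positive bidegree is locally $dd^c$-exact) and kills the integral $\int dd^c(g\, M^{\fff_x}\w\mu)$ by Stokes' theorem, whereas you never need a potential for $\gamma$ at all --- you instead observe that the factor $M^{\fff_x}_{k-\ell}\w\hat\mu$ is a generalized cycle of positive dimension $\ell$ supported in the single point $\{x\}$, so it already vanishes by the dimension principle (Proposition~\ref{kvick}). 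Your route is slightly more self-contained within the formalism of generalized cycles (it reuses machinery already needed to make \eqref{plutt} well defined, and avoids invoking the local $dd^c$-lemma on a possibly singular $X$), while the paper's Stokes argument is shorter and makes transparent that the vanishing is really a cohomological statement about exact forms against closed compactly supported currents. Both are valid; just make sure you keep the hypothesis $\ell\ge 1$ visible, since it is exactly what makes the support $\{x\}$ too small in your final step.
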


\begin{proof} 
Let ${\fff_x}$ generate the maximal ideal at $x$, and write $\gamma=dd^c g$ in a \nbh of $x$. 
By Proposition~\ref{nylikhet} and Stokes' theorem, noting that $M^{\fff_x}\w\mu$ has support at $x$, 
$$
\mult_x(\gamma\w\mu)=\int M^{\fff_x}\w(\gamma\w\mu)
=\int \gamma\w M^{\fff_x}\w\mu=\int dd^c(g\w M^{\fff_x}\w\mu)=0.
$$
\end{proof}

\begin{ex}[Example~\ref{planmedel} continued]
It follows from Lemma~\ref{spe}  that $\mult_x\theta^{n-k}=0$ for $x\neq p$.
From the geometric interpretation as a mean value of $k$-planes through $p$, 
or by a direct computation of $M^{\fff_x}  \w \theta^{n-k}$,
one can verify that $\mult_p\theta^{n-k}=1$.
\end{ex}

In view of Theorem~\ref{genking}, $\mult_x (M_j^{\fff}\wedge \1_X)=\mult_x M_j^{\fff}=e_j(\J,x)$.
For a general $\mu$ in $\GZ_k(X)$ or $\B_k(X)$  we can define the Segre numbers 
$e_j(\J,\mu,x):=\mult_x (M^\fff_j\w\mu)$.

\section{Regular embeddings}\label{regular}
Assume that $\J\to X$ defines a regular embedding 
$i\colon Z_\J\to X$ of codimension $\kappa$, cf.~the introduction and Remark~\ref{asegre}.
As before $Z$ denotes the associated reduced space,
i.e., the zero set of $\J$. 
It is well-known that $\J/\J^2$ is locally free and thus is the sheaf of sections of a vector bundle
known as the conormal bundle of $Z_{\J}$ in $X$. We will denote its dual by $N_{\J}X$, refer to it 
as the normal bundle of   
$Z_{\J}$ in $X$, and view it as a
holomorphic vector bundle over the reduced space $Z$. We will use the following alternative ad hoc definition of 
$N_{\J}X$ and its sections: A section $\xi$ of $N_\J X\to Z$
is a choice of holomorphic $\kappa$-tuple $\xi(s)$ locally on $Z$ for each
local minimal set $s=(s_1,\ldots,s_\kappa)$ of generators for $\J$ so that
\begin{equation}\label{sss}
g\xi(s)=\xi(gs)\ \text{on} \  Z
\end{equation}
for any locally defined holomorphic matrix $g$ that is invertible in a neighborhood 
of $Z$. 
This defines a vector bundle over $Z$ since for any two such choices $s,s'$ there is an invertible
matrix $g$ such that $s'=gs$ on the overlap in a \nbh of $Z$.
The connection between $N_{\J}X$ and $\J/\J^2$ is the non-degenerate pairing
$(\xi(s), h+\J^2)\mapsto \xi(s)\cdot h_s$, where $h_s$ is a tuple, unique mod $\J^2$,
such that $h=s\cdot h_s$.
\begin{ex}
If $Z$ is smooth and $Z_\J$ reduced, then for any $s$ as above,
the $ds_j$ are linearly independent on 
$Z$ and vanish on $TZ$. Notice that $gds=d(gs)$ on $Z$.
Therefore, $v\mapsto \xi(s):=(ds_1\cdot v,\ldots,ds_{\kappa}\cdot v)$ defines
an injective mapping, hence an isomorphism, $TX|_Z/TZ \to N_\J X$.
In this case therefore $N_\J X$ is the usual normal bundle of complex differential geometry.
\end{ex}

\begin{remark}
Several results of this section are well-known, at least in the algebraic context. 
For completeness and reference we give analytic proofs.
\end{remark}

\begin{lma}\label{polly}
Assume that $F\to X$ is a vector bundle with a holomorphic section $\varphi$ that
defines $\J$. Then there is a canonical embedding
\begin{equation}\label{apa}
i_{\varphi} \colon N_\J X\to F|_Z.
\end{equation}
\end{lma}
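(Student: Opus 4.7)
The plan is to construct $i_\varphi$ locally using minimal generators of $\J$, then check that the construction descends to a globally well-defined injective bundle map. First I would work in a small open set $U\subset X$ on which $F$ admits a holomorphic frame, and write $\varphi=(\varphi_1,\ldots,\varphi_r)$ for the corresponding components. Choosing a minimal local generating tuple $s=(s_1,\ldots,s_\kappa)$ for $\J|_U$, I can write $\varphi_i=\sum_{j=1}^\kappa A_{ij}s_j$ for some $r\times\kappa$ matrix $A=(A_{ij})$ of holomorphic functions on $U$. For a section $\xi$ of $N_\J X$ over $U\cap Z$, I would define $i_\varphi(\xi)$ to be the section of $F|_{U\cap Z}$ whose coordinates in the chosen frame are the entries of the column vector $A|_Z\,\xi(s)\in\C^r$.

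Next I would verify the two independence statements. If $s'=gs$ is another minimal tuple of generators on (a smaller) $U$, then $\varphi=As=(Ag^{-1})s'$, so the matrix relative to $s'$ is $A'=Ag^{-1}$; by the compatibility \eqref{sss} one has $\xi(s')=\xi(gs)=g\xi(s)$ on $Z$, whence
$A'|_Z\,\xi(s')=(Ag^{-1})|_Z g\xi(s)=A|_Z\,\xi(s)$,
so the resulting vector is unchanged. Under a change of holomorphic frame for $F$ by an invertible matrix $h$, the components of $\varphi$ transform to $h\varphi$, the matrix becomes $hA$, and the column vector of components transforms by $h$, i.e.\ exactly as it must to describe the same point of the fiber of $F|_Z$. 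Thus $i_\varphi$ glues to a globally defined holomorphic bundle map $N_\J X\to F|_Z$.

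The main obstacle will be pointwise injectivity on $Z$, which is equivalent to the matrix $A|_Z$ having rank $\kappa$ at every point of $Z$. For this I would use the hypothesis that the embedding is regular: the conormal sheaf $\J/\J^2$ is locally free of rank $\kappa$ over $\O_Z$, and because $s_1,\ldots,s_\kappa$ is a minimal generating set the classes $\bar s_1,\ldots,\bar s_\kappa$ form a local frame. Reducing the identity $\varphi_i=\sum_j A_{ij}s_j$ modulo $\J^2$ yields $\bar\varphi_i=\sum_j A_{ij}|_Z\,\bar s_j$. Since $\varphi_1,\ldots,\varphi_r$ generate $\J$, the classes $\bar\varphi_1,\ldots,\bar\varphi_r$ generate $\J/\J^2$, so the matrix $A|_Z$ must have rank $\kappa$ pointwise on $Z$. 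This forces $\xi(s)\mapsto A|_Z\,\xi(s)$ to be fiberwise injective, as required.

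A more invariant way to organize the same argument, which I would mention for clarity, is that contraction with $\varphi$ gives a sheaf map $F^*\to\J\hookrightarrow\O_X$, and composing with $\J\twoheadrightarrow\J/\J^2$ kills $\J F^*$ and therefore induces a surjection $F^*|_Z\twoheadrightarrow\J/\J^2$; dualizing produces precisely the injective map $i_\varphi\colon N_\J X=(\J/\J^2)^*\hookrightarrow F|_Z$, manifestly canonical.
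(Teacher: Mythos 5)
Your proof is correct and follows essentially the same route as the paper's: you define the map locally by the matrix $A(s)$ with $\varphi=A(s)s$, check compatibility under $s\mapsto gs$ exactly as in \eqref{kobolt}, and deduce fiberwise injectivity from the fact that $\varphi$ generates $\J$. The only difference is that you spell out the injectivity step (via $\J/\J^2$ being free with frame $\bar s_1,\ldots,\bar s_\kappa$ and Nakayama) and add the frame-independence check and the dual reformulation, all of which the paper leaves implicit; this is a harmless elaboration, not a different argument.
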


\begin{proof}
For each minimal set of generators $s$ of $\J$ in some open connected $U\subset X$
there is a unique $\tilde{A}(s)$ in 
$\Hom(U\times\C^\kappa, F|_U)$ such that $\tilde{A}(s) s=\varphi$.  
Set $A(s)=\tilde{A}(s)|_Z\in \Hom(Z\times\C^\kappa, F|_Z)$. Since $\varphi$
generates $\J$ it follows that $A(s)$ is pointwise injective.
Since 
$
\varphi=\tilde{A}(gs)gs
$
it follows that 
$
A(gs) g= A(s).
$
If $\xi$ is a section of $N_{\J}X$  therefore
\begin{equation}\label{kobolt}
A(gs)\xi(gs)=A(gs) g\xi(s)=A(s)\xi(s).
\end{equation}
Thus we can define $i_\varphi$ as
\begin{equation}\label{lar1}
\xi\mapsto i_\varphi \xi, \quad \xi(s)\mapsto A(s)\xi(s).
\end{equation}
Since $A(s)$ is pointwise injective it follows that 
$i_\varphi$  is injective.
\end{proof}

In particular, if $\text{rank}\, F=\kappa$, then we have an isomorphism 
\begin{equation}\label{polly2}
i_{\varphi}\colon N_\J X\simeq F|_Z.
\end{equation}

\begin{proof}[Proof of Proposition~\ref{gata2}]
We let $q\colon \P(F)\to X$ be the projectivization of $F$ and let
 $\Ok_F(-1)\to\P(F)$ be the tautological line bundle sitting in $q^*F$,
 equipped with the Hermitian metric inherited from $F$. 
 The line bundle $\Ok_{N_\J X}(-1)\to\Pk(N_\J X)$ is defined in the same way, with the Hermitian metric inherited
 from the normal bundle $N_\J X\to Z$, which in turn has the metric induced by \eqref{apa}.
Moreover,
we let   $p\colon Bl_{\J} X\to X$ be the blow-up of $X$ along $\J$ and let $L_D\to Bl_{\J} X$
be the line bundle associated with the exceptional divisor.  There are injective holomorphic mappings
$j,\tilde j, \psi, \tilde\psi$ such that the diagram 
\begin{equation}\label{diagram2}
\xymatrix{
\mathcal{O}_{N_\J X}(-1) \ar@{^{(}->}[r]^{\tilde j} \ar[d] & L_D  \ar@{^{(}->}[r]^{\tilde\psi} \ar[d] & \mathcal{O}_F(-1) \ar[d]\\
\mathbb{P}(N_{\J} X) \ar@{^{(}->}[r]^{j} \ar[d]^{\pi} & Bl_{\J}X \ar@{^{(}->}[r]^{\psi} \ar[d]^p & \mathbb{P}(F) \ar[d]^q\\
Z \ar@{^{(}->}[r]^i & X \ar[r]^{=} & X
}
\end{equation} 
commutes and such that furthermore the Hermitian metric on  $\Ok_{N_\J X}(-1)$ coincides with the metric
it inherits from $\Ok_F(-1)$ via the first row.

Let us first explain the mapping $j$.  Given a minimal set of local generators $s$ of $\J$ as above
in say an open set $U\subset X$ we can represent $Bl_\J U\to U$ as
\begin{equation} 
Bl_\J U=\{(x,[t])\in U\times \P^{\kappa-1}; \  t_i s_j(x)-t_js_i(x)=0\}.
\end{equation}
If we choose $s'=gs$ in $U'$,  then we have a similar representation but with $[t']=[gt]$ on the overlap $U\cap U'$.
Recall that at $x\in Z$ the fibre of $N_\J X$ consists of all $\xi(s)\in\C^\kappa$ such that $\xi(s')=g\xi(s)$, 
cf.~\eqref{sss}.  We thus have the natural injection 
$$
j\colon \P(N_\J X)\to Bl_\J X, \quad  (x,[\xi(s)])\mapsto (x,[t]).
$$

In $Bl_\J X\setminus p^{-1}Z$ we define $\psi$ by $\psi(p^{-1}x)=(x,[\varphi(x)])$.  
If  $x\in U\setminus p^{-1}Z$, then $p^{-1}x=(x,[s(x)])$ and $\tilde A(s)s=\varphi$,
cf.\ the proof of Lemma~\ref{polly}, so 
we have that  
\begin{equation}\label{orm}
\psi(x,[t])=(x,[\tilde A(s) t])
\end{equation}   
since $[s(x)]=[t]$. 
Since $A=\tilde A|_Z$ is injective, \eqref{orm} provides an injective extension of $\psi$
across $p^{-1}Z$ in $U$.   This extension is well-defined on overlaps because
if $s'=gs$, then $[t']=[gt]$ and by \eqref{kobolt} hence $[A(s')t']=[A(s)t]$. 
For $x\in Z$ thus $(x,[\xi(s)])$ in $\P(N_\J X)$ is mapped to $(x,[t])$   and by  $\psi$ in turn to 
$(x, [A(s)\xi(s)])$ so the composed mapping 
$\psi\circ j$ is equal to the mapping $\P(N_\J X)\to\P(F)$
induced by the canonical embedding $i_\varphi$, cf.~\eqref{lar1}. 
Thus the lower ``half'' of the diagram is defined and commutes.

We now define the mapping $\tilde\psi$.
Since $p^*\J$ is principal we recall from the proof of Theorem~\ref{putig} (with $\fff=\varphi$
and $p^*\varphi=\varphi^0\varphi'$)
that   $L_D\to Bl_\J X$ can be identified with a line subbundle
of $p^*E\to Bl_\J X$ via the mapping $\varphi'$.  
Since by commutativity $p^*F$ is the restriction of $q^*F$ to $\psi(Bl_\J X)$ we have an injective mapping
$L_D\hookrightarrow q^*F$.  We must verify that it actually takes values in $\Ok_F(-1)\subset q^*F$.  
By continuity it is enough to check that this holds over $Bl_\J X\setminus p^{-1}Z$.
However, there the section $\varphi^0$ is non-vanishing and mapped onto
$\varphi^0\varphi'=p^*\varphi$.  Thus $(p^{-1}x,\varphi^0(x))$ is mapped onto
$(x,[\varphi(x)], \varphi(x))$ which is in $\Ok_F(-1)$ by definition.  

It remains to explain $\tilde j$. Notice that $i_\varphi$ induces an embedding
$\pi^*N_\J X\hookrightarrow q^*F$ and hence also an embedding 
$\tilde i_\varphi \colon \Ok_{N_\J X}(-1)\hookrightarrow \Ok_F(-1)$.  Since $\tilde\psi$   is already defined,
there is a unique mapping $\tilde j$ so that $\tilde\psi \circ \  \tilde j=\tilde i_\varphi$ and the diagram commutes.
If $\xi$ is a vector in $\Ok_{N_\J X}(-1)$, then by definition $|\xi|=|\xi|_{N_\J X}$ equals $|i_\varphi\xi|_F$. However,
the norm of $\xi$ induced by the top line is  $|\tilde i_\varphi\xi|$  which in turn is $|i_\varphi\xi|_F$ as well.
Thus the claims about \eqref{diagram2} are proved.

\smallskip
As before, cf.~\eqref{babian}, we let $-\hat\omega=\hat c_1(L_D)$.
By \eqref{diagram2}, $-j^*\hat{\omega}$ is the 
first Chern form of $\mathcal{O}_{N_\J X}(-1)\to\mathbb{P}(N_{\J} X)$
and so, by definition, cf.~\eqref{orgie}, 
$$
\hat{s}(N_{\J}X)=\pi_*\big(\frac{1}{1-j^*\hat\omega}\big)   
$$
Each irreducible component $Z_\ell$ of $Z$ corresponds to an irreducible component $D_\ell=p^{-1}Z_\ell$
of $j(\mathbb{P}(N_\J X))=|D|$
and $[D]=\sum_\ell a_\ell [D_\ell]$
for some integers $a_\ell$.  
Since $\hat{s}(N_{\J}X)$ and $\hat\omega$ are smooth it follows that
\begin{equation}\label{orvar1}
\1_{Z_\ell} \hat s(N_\J X)=\pi_*\1_{j^{-1}D_\ell}\big(\frac{1}{1-j^*\hat\omega}\big).
\end{equation}
Multiplying by $a_{\ell}$ and applying $i_*$ to the left hand side of \eqref{orvar1} we get 
$$
\hat s(N_\J X) \w a_\ell [Z_\ell].
$$
The same action on the right hand side of \eqref{orvar1} gives, using that $i_*\pi_*=p_*j_*$,
$$
p_*\big( a_\ell[D_\ell]\w \frac{1}{1-\hat\omega}\big).
$$
Summing up we get
\begin{equation}\label{stopp}
\hat s(N_\J X) \w \sum_\ell  a_\ell[Z_\ell]= p_* \big([D]\w \frac{1}{1-\hat\omega}\big)
=p_*\big(\sum_k [D]\wedge\hat{\omega}^k\big)=M^{\varphi},
\end{equation}
where the last equality follows from \eqref{trott} (with $W=Bl_\J X$ and $\alpha=1$); notice that 
$M^{\varphi}_0=0$ here.  It remains to see that
$\sum_{\ell}a_{\ell}[Z_{\ell}]$ is the fundamental cycle $[Z_{\J}]$:
Since $\hat s_0(N_\J X) =1$ we get from \eqref{stopp} that 
$
M_\kappa^{\varphi}=\sum_\ell  a_\ell[Z_\ell].
$
The same argument applied to the section $s=(s_1,\ldots,s_{\kappa})$ of the trivial 
$\text{rank}\,\kappa$-bundle (with trivial metric) over $U$ gives that $M_{\kappa}^{s} =\sum_\ell  a_\ell[Z_\ell]$.
It follows from \cite[Ch.~3.16, Thm~3]{Ch} that $M^s=M^s_\kappa$ is the proper intersection 
$[\div s_1]\w\cdots\w[\div s_\kappa]$,  and it follows from \cite[Ch.\ 7]{Fult} that this product is
the fundamental cycle $[Z_\J]$ in case of a regular embedding. 
\end{proof}

\begin{remark}\label{kossa}
In the proof above we did not describe $\tilde j$ explicitly. 
With the notation above,  in a set $p^{-1}U$ we can consider $L_D\to Bl_\J U$ as the line subbundle of
$Bl_\J U\times\C^\kappa$ such that the fibre over a point $(x,[t])$ is the line $\{\lambda t\in\C^\kappa;\  \lambda\in \C\}$.  
Thus $\tilde j$ maps the point $(x, [\xi(s)], \xi(s)]$ to $(x,[t],t)$ in $L_D$.
\end{remark}

It is well-known, and indeed follows from the proof above, that $Bl_{\J}X$ can be seen as the 
closure in $\mathbb{P}(E)$ of the graph
$\{(x,[\varphi(x)])\in\mathbb{P}(E);\, x\in X\setminus Z\}$; then the mapping $\psi$ is 
of course just the natural inclusion.

\begin{cor}\label{aeswytillagg}
Let  $i\colon V\hookrightarrow X$ be an irreducible subvariety and assume 
that $i^*\varphi$ defines a regular embedding
of codimension $\kappa$ in $V$. Then
$$
M^{\varphi}\wedge [V] = \hat{s}(N_{\J}X)\wedge [Z_{\J}]\wedge [V].
$$
\end{cor}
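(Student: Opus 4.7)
The plan is to combine the projection formula (Theorem~\ref{putig}(iv)) with Proposition~\ref{gata2} applied on the subvariety $V$. Since $i\colon V\hookrightarrow X$ is proper and $[V]=i_*\1_V$, the projection formula immediately gives $M^\varphi\wedge[V]=i_*(M^{i^*\varphi}\wedge\1_V)=i_*M^{i^*\varphi}$. By hypothesis $i^*\varphi$, as a section of the restricted Hermitian bundle $i^*F\to V$, defines a regular embedding of codimension $\kappa$; therefore Proposition~\ref{gata2} on $V$ yields $M^{i^*\varphi}=\hat s(N_{i^*\J}V)\wedge[Z_{i^*\J}]$ in $\GZ(V)$, where $N_{i^*\J}V$ carries the metric inherited via the canonical embedding of Lemma~\ref{polly}.

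The key step is then to identify $N_{i^*\J}V$ with the pullback $i^*N_\J X$ as Hermitian vector bundles over $Z\cap V$. Because the two embeddings share the codimension $\kappa$, any minimal set of generators $s=(s_1,\ldots,s_\kappa)$ of $\J$ near a point of $Z\cap V$ restricts to a minimal set $i^*s$ of generators of $i^*\J$. Uniqueness of the factorization $\tilde A(s)s=\varphi$ in the proof of Lemma~\ref{polly} then forces $\tilde A(i^*s)=i^*\tilde A(s)$, so the images and induced metrics of $i_\varphi\colon N_\J X\hookrightarrow F|_Z$ and $i_{i^*\varphi}\colon N_{i^*\J}V\hookrightarrow F|_{Z\cap V}$ coincide over $Z\cap V$. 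Consequently $\hat s(N_{i^*\J}V)=i^*\hat s(N_\J X)$ by the functoriality \eqref{orgie1}. Pushing forward and applying \eqref{ingen} gives $i_*M^{i^*\varphi}=i_*(i^*\hat s(N_\J X)\wedge[Z_{i^*\J}])=\hat s(N_\J X)\wedge i_*[Z_{i^*\J}]$.

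To finish, one recognizes $[Z_\J]\wedge[V]=i_*[Z_{i^*\J}]$. Taking the component of Proposition~\ref{gata2} in bidegree $(\kappa,\kappa)$ (where $\hat s_0=1$) gives $[Z_\J]=M^\varphi_\kappa$ and similarly $[Z_{i^*\J}]=M^{i^*\varphi}_\kappa$; reapplying the projection formula in this bidegree identifies the proper intersection $[Z_\J]\wedge[V]$ with $M^\varphi_\kappa\wedge[V]=i_*M^{i^*\varphi}_\kappa=i_*[Z_{i^*\J}]$, and the corollary follows. The main obstacle will be the functoriality step: one must carefully use the matching codimensions to verify that $i^*s$ remains a minimal set of generators of $i^*\J$, and then trace this through the local construction of Lemma~\ref{polly} to conclude that the canonical embeddings into $F$ and their inherited metrics are compatible with pullback along $i$. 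Everything else is essentially bookkeeping with the projection formula and \eqref{ingen}.
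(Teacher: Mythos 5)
Your proposal is correct and follows essentially the same route as the paper: projection formula to reduce to $i_*M^{i^*\varphi}$, Proposition~\ref{gata2} applied to $i^*\J$ on $V$, the identification $N_{i^*\J}V=i^*N_\J X$ with inherited metrics so that $\hat s(N_{i^*\J}V)=i^*\hat s(N_\J X)$, and the identity $i_*[Z_{i^*\J}]=[Z_\J]\wedge[V]$ obtained by comparing the two expressions for $M^\varphi_\kappa\wedge[V]$. The only small imprecision is attributing the equality of $M^\varphi_\kappa\wedge[V]$ with the classical proper intersection $[Z_\J]\wedge[V]$ to the projection formula alone; the paper justifies it via the $\epsilon$-regularization \eqref{epsilon} and the local theory of \cite{aswy}, but this does not change the argument.
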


\begin{proof}
From Proposition~\ref{gata2} we have for degree reasons  that
$M^\varphi_\kappa=[Z_\J]$.
Since $Z$ and $V$ intersect properly by assumption, $[Z_{\J}]\wedge [V]$ makes sense
and, moreover, 
$
M^\varphi_\kappa\w[V]=[Z_\J]\w[V],
$
cf.~\cite[Section~2.4]{aswy} and \eqref{epsilon}. 
On the other hand, from Proposition~\ref{gata2} applied to $i^*\J$, 
cf.~\eqref{soting}, 
$
M^\varphi_\kappa\w[V]=i_* M^{i^*\varphi}_\kappa= i_* [Z_{i^*\J}].
$
Thus
\begin{equation}\label{fabel}
i_* [Z_{i^*\J}]=[Z_\J]\w[V].
\end{equation}
If the tuple $s=(s_1,\ldots,s_{\kappa})$ generates $\J$, then $i^*s$ generates $i^*\J$
and the transition matrices of $N_{i^*\J}V$ are the restriction to $V$ of the transition
matrices of $N_{\J}X$.  Thus
$%
N_{i^*\J}V=i^*N_{\J}X.
$      
Moreover, the Hermitian metric on $N_{i^*\J}V$ is inherited from $N_{\J}X$ so that
\begin{equation}\label{grobian}
\hat s\big(N_{i^*\J}V\big)=i^*\hat s\big(N_{\J}X\big).
\end{equation}
By Proposition~\ref{gata2},  \eqref{soting}, \eqref{fabel} and \eqref{grobian} we thus get
\begin{eqnarray*}
M^{\varphi}\wedge [V] &=& i_*M^{i^*\varphi} = i_*\big(\hat{s}(N_{i^*\J}V)\wedge [Z_{i^*\J}]\big)
=i_*\big(i^*\hat{s}(N_{\J}X)\wedge [Z_{i^*\J}]\big) \\
&=& \hat{s}(N_{\J}X)\wedge [Z_{\J}]\wedge [V].
\end{eqnarray*}
\end{proof}

\begin{proof}[Proof of Proposition~\ref{gata1}]
Notice that the right-hand side of the equation in the formulation of the proposition is well-defined in view
of Proposition~\ref{pluttens}.
In view of Theorem~\ref{thm2} and \eqref{skuta1} the proposition follows immediately from Proposition~\ref{gata2}
if there is  a vector bundle with a section defining $\J$. If not we still have, cf.~Remark~\ref{kossa},
 the commutative diagram
\begin{equation}\label{diagram}
\xymatrix{
\mathcal{O}_{N_\J X}(-1) \ar@{^{(}->}[r]^{\tilde j} \ar[d] & L_D  \ar[d]  \\ 
\P(N_\J X)    \ar@{^{(}->}[r]^{j} \ar[d]^{\pi} & Bl_\J X  \ar[d]^p\\
Z \ar@{^{(}->}[r]^i & X.
}
\end{equation}  
By definition, cf.~\eqref{segredef}, recalling that $\kappa\ge 1$,
$S(\J,X)=p_*\big([D]\wedge 1/(1+c_1(L_D))\big)$ and, by \eqref{orgie}, $s(N_{\J}X)=\pi_*\big(1/(1+j^*c_1(L_D))\big)$.
Thus the result follows as in the proof of Proposition~\ref{gata2},  replacing computations
in $\GZ(X)$ by analogous ones in $\B(X)$. 
\end{proof}

\begin{prop}\label{vaarraekning}
Let $\fff$ be a holomorphic section of a Hermitian bundle $E\to X$ defining 
the regular embedding $\J$ and 
let $\varphi$ be a holomorphic section of a Hermitian bundle $F\to X$ defining 
a regular embedding of codimension $1$. Suppose that the section $\fff+\varphi$ of the Hermitian bundle
$E\oplus F\to X$ defines a regular embedding of codimension $\kappa+1$.
Then 
$$
M^{\fff+\varphi}=M^{\fff}\wedge M^{\varphi}=M^{\varphi}\wedge M^{\fff}.
$$
\end{prop}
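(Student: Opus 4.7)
The strategy is to compute each of the three currents via Proposition~\ref{gata2} and reduce everything to a common expression.  Write $\J'$ for the ideal sheaf generated by $\varphi$ and $\J''$ for the ideal generated by $\fff+\varphi$, and equip $E\oplus F$ with the orthogonal direct sum of the given Hermitian metrics on $E$ and $F$.  Under this choice the Chern connection on $E\oplus F$ is block-diagonal, so Whitney multiplicativity holds at the form level; combined with Remark~\ref{kola} this gives $\hat s(E\oplus F) = \hat s(E)\wedge\hat s(F)$.  The canonical identifications $N_\J X \simeq E|_Z$, $N_{\J'}X \simeq F|_{Z_{\J'}}$, and $N_{\J''}X \simeq (E\oplus F)|_{Z_{\J''}}$ from \eqref{polly2} combined with Proposition~\ref{gata2} then recast the three targets as
\begin{equation*}
M^{\fff} = \hat s(E)\wedge [Z_\J], \quad M^{\varphi} = \hat s(F)\wedge [Z_{\J'}], \quad M^{\fff+\varphi} = \hat s(E)\wedge\hat s(F)\wedge [Z_{\J''}].
\end{equation*}

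The next step is to rewrite $M^\fff\wedge M^\varphi$ in the same shape.  Since $\hat s(F)$ is a smooth form on all of $X$, Proposition~\ref{nylikhet} lets us pass it through:
\begin{equation*}
M^\fff\wedge M^\varphi \;=\; M^\fff\wedge \bigl(\hat s(F)\wedge [Z_{\J'}]\bigr) \;=\; \hat s(F)\wedge \bigl(M^\fff\wedge [Z_{\J'}]\bigr).
\end{equation*}
Decomposing $[Z_{\J'}] = \sum_\ell a_\ell [V_\ell]$ into its reduced irreducible components and applying Corollary~\ref{aeswytillagg} to each inclusion $i_\ell\colon V_\ell \hookrightarrow X$, with $\fff$ in the role of the defining section, gives $M^\fff\wedge [V_\ell] = \hat s(E)\wedge [Z_\J]\wedge [V_\ell]$.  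Summing and combining with the previous display yields
\begin{equation*}
M^\fff\wedge M^\varphi \;=\; \hat s(E)\wedge\hat s(F)\wedge [Z_\J]\wedge [Z_{\J'}].
\end{equation*}
Since $\fff+\varphi$ defines a regular embedding of codimension $\kappa+1$, the subschemes $Z_\J$ and $Z_{\J'}$ meet properly, and by \cite[Ch.~7]{Fult} the proper intersection of their fundamental cycles equals $[Z_{\J''}]$.  Substituting gives $M^\fff\wedge M^\varphi = M^{\fff+\varphi}$; the symmetric argument with the roles of $\fff$ and $\varphi$ exchanged produces $M^\varphi\wedge M^\fff = M^{\fff+\varphi}$.

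The main obstacle is verifying the hypothesis of Corollary~\ref{aeswytillagg} for each $V_\ell$, namely that $i_\ell^*\fff$ defines a regular embedding of codimension $\kappa$ in $V_\ell$.  Set-theoretically this is clear: $V_\ell \not\subset Z$, because $\codim V_\ell = 1 < \kappa+1 = \codim Z_{\J''}$, so $V_\ell\cap Z$ has codimension $\kappa$ in $V_\ell$ and is cut out by the $\kappa$ restricted generators of $i_\ell^*\J$.  The delicate point is whether this complete intersection, on the possibly non-Cohen--Macaulay space $V_\ell$, is genuinely a regular embedding in the sense required by Corollary~\ref{aeswytillagg}.  One expects this to follow from $(\fff,\varphi)$ being a regular sequence on $X$ together with the fact that $\varphi$ is a non-zero divisor (as it defines a codimension-one regular embedding), but the verification is where the algebraic content of the hypothesis really enters, and accounts for the asymmetry in the hypotheses (the $\rank F = 1$ assumption, which makes $[Z_{\J'}]$ a Cartier divisor with well-behaved components).
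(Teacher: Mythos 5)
Your plan follows the same basic idea as the paper's proof of the case $\kappa=1$ (express everything via Proposition~\ref{gata2}, use Corollary~\ref{aeswytillagg} on the components of $[Z_{\J'}]$, and use multiplicativity of Segre forms for an orthogonal direct sum), but there are two problems. The smaller one: your three displayed identities invoke \eqref{polly2}, which gives $N_\J X\simeq E|_Z$ only when $\rank E=\kappa$; the proposition does not assume $\rank E=\kappa$ or $\rank F=1$ (you even refer to ``the $\rank F=1$ assumption,'' which is not among the hypotheses). Proposition~\ref{gata2} actually yields $M^{\fff}=\hat s(N_\J X)\w[Z_\J]$ with the metric on $N_\J X$ induced by the embedding $i_\fff\colon N_\J X\to E|_Z$, and similarly for the other two; to run your argument you would additionally have to check that the metric induced on $N_{\J''}X$ by $i_{\fff+\varphi}$ splits as the orthogonal sum of the metrics induced on $N_\J X$ and $N_{\J'}X$, so that $\hat s(N_{\J''}X)=\hat s(N_\J X)\w\hat s(N_{\J'}X)$ on $Z_{\J''}$. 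This is fixable but is not addressed.

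The serious gap is the one you flag yourself and then leave open: applying Corollary~\ref{aeswytillagg} to a reduced irreducible component $V_\ell$ of $Z_{\J'}$ requires that $i_\ell^*\fff$ define a \emph{regular embedding} of codimension $\kappa$ in $V_\ell$, and for $\kappa\ge 2$ this does not follow from the hypotheses. The codimension count you give only shows that $i_\ell^*\J$ is an ideal of height $\kappa$ generated by $\kappa$ elements; on a possibly non-Cohen--Macaulay $V_\ell$ that is weaker than being generated by a regular sequence, and the regular-sequence property of $(\fff,\varphi)$ on $X$ passes to the (possibly non-reduced) scheme $Z_{\J'}$, not to the individual reduced components. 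This is precisely the difficulty the paper's proof is built to avoid: it first settles the case $\kappa=1$, where the hypothesis of Corollary~\ref{aeswytillagg} degenerates to $\fff$ being generically non-vanishing on each component of $|D_2|$ (automatic from proper intersection, since a codimension-one ideal on a reduced irreducible variety is regular as soon as its local generator is not identically zero), and then reduces $\kappa\ge 2$ to that case by pulling everything back to the blow-up $p\colon Bl_\J X\to X$, where $p^*\fff$ becomes principal, and pushing forward using Example~\ref{packad} and the projection formula, Theorem~\ref{putig}~(iv). Without either proving your regularity claim on each $V_\ell$ or introducing such a principalization step, the argument does not go through for $\kappa\ge 2$.
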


\begin{proof}
Let us first assume that $\kappa=1$. Then the statement is symmetric in $\fff$ and $\varphi$;
$\fff$ and $\varphi$ are sections of line subbundles $L_1\subset E$ and $L_2\subset F$
defining divisors $D_1$ and $D_2$, respectively. 
By Proposition~\ref{gata2},  $M^{\varphi}=\hat{s}(L_2)\wedge [D_2]$,  
cf.~\eqref{polly2}, and so, by Corollary~\ref{aeswytillagg}, 
since $\fff|_{|D_2|}$ is generically non-vanishing, 
$$
M^{\fff}\wedge M^{\varphi} = \hat{s}(L_1)\wedge \hat{s}(L_2)\wedge [D_1]\wedge[D_2].
$$
We have that $\fff+\varphi$ is a section of the Hermitian bundle $\mathcal{E}:=L_1\oplus L_2 \subset E\oplus F$ defining a regular embedding 
of codimension $2$. Denote the corresponding ideal by $\J'$, its zero set by $Z'$, and notice that $N_{\J'}X=\mathcal{E}|_{Z'}$. 
By Proposition~\ref{gata2} we have
$$
M^{\fff+\varphi}=\hat{s}(\mathcal{E})\wedge [Z_{\J'}]=
\hat{s}(\mathcal{E})\wedge [D_1]\wedge [D_2],
$$
where the last equality follows as in the end of the proof of Proposition~\ref{gata2}. 
It follows from \eqref{smal} and \eqref{mor} that 
$\hat{s}(\mathcal{E})=\hat{s}(L_1)\wedge \hat{s}(L_2)$. This concludes the proof when $\kappa=1$.

Now assume that  $\kappa\ge 2$.  
Let $p\colon Bl_\J X\to X$ be the blow-up of $X$ along $\J$. Then both $p^*\fff$ and $p^*\varphi$ define
principal ideals and it is readily verified that $p^*\fff + p^*\varphi$ defines a regular embedding in
$Bl_\J X$ of codimension $2$.  Since $p$ is a modification it is generically an isomorphism and
hence  from  Example~\ref{packad},
Theorem~\ref{putig}~(iv), and the  case $\kappa=1$ proved above,  we get 
$$
M^{\fff+\varphi}=p_* M^{p^*\fff+p^*\varphi}=p_*\big(M^{p^*\fff}\w M^{p^*\varphi}\big)=
M^\fff\w p_*\big(M^{p^*\varphi}\big)=M^\fff\w M^\varphi.
$$
\end{proof}

\begin{remark} It is not necessary to assume that $\fff$ defines a regular embedding;
the proof only relies on the fact that $p^*\fff + p^*\varphi$ 
defines a regular embedding.  One can therefore formulate a variant of Proposition~\ref{vaarraekning} that
is a global version of  Lemma~9.2 in \cite{aswy}.
\end{remark}
 
\begin{ex}
Let $\tau\colon X'\to X$ be a section of
a locally trivial fibration $\pi\colon X\to X'$ with one-dimensional fibers,
let $\varphi$ be a section of a Hermitian line bundle $L\to X$ defining $\tau(X')$, and let $\fff'$ be a section of a 
Hermitian bundle $E'\to X'$ defining a regular embedding. If $\fff=\pi^*\fff'$, then
$$
\tau_* M^{\fff'}=\hat c(L)\wedge M^{\fff+\varphi}.
$$
To see this, notice first that it follows from Proposition~\ref{gata2} and \eqref{smal} that 
$\hat{c}(L)\wedge M^{\varphi}=[\varphi=0]=[\tau(X')]$. Thus by Proposition~\ref{vaarraekning},
$\hat c(L)\wedge M^{\fff+\varphi}=M^{\pi^*\fff'}\wedge \hat{c}(L)\wedge M^{\varphi}=M^{\pi^*\fff'}\wedge [\tau(X')]=\tau_*M^{\fff'}$.
\end{ex}

Let $i\colon Z_\J\hookrightarrow X$ be a regular embedding of codimension $\kappa\ge 1$.
We conclude with a short discussion of the $\B$-Gysin mapping \eqref{bgysin}. 
It is further studied in \cite{AESWY2}.
In analogy with
Chow theory, cf.\ \cite[Ch.\ 6]{Fult}, one can think
of $(c(N_\J X)\w S(\J,X)\w \mu)_{k-\kappa}$ as an intersection
of $Z_\J$ and $\mu$ in $\B(X)$.  We assume that $\J$ is defined 
by the section $\varphi$ of the Hermitian bundle $F\to X$ so we can also
consider the more explicit mapping \eqref{jupiter}.

First, let  $\gamma\in\GZ_k(X)$ be a product of components of 
Chern or Segre forms. We claim that 
\begin{equation}\label{putin}
(\hat c(N_\J X)\w M^{\varphi}\wedge\gamma)_{k-\kappa}=[Z_\J]\w\gamma=i_* i^*\gamma, 
\end{equation}
so that \eqref{jupiter} 
can be seen as a generalization to $\GZ(X)$  of $i_* i^*$.
In fact, by \eqref{samba2}, $M^\varphi\w\gamma=\gamma\w M^\varphi$, and so
\eqref{putin} follows from Proposition~\ref{gata2} and \eqref{smal}.  In the same way
\eqref{bgysin} is a generalization to $\B(X)$ of $i_*i^*$.

\begin{ex}\label{bgysin1}
If $Z_\J$ is a divisor, i.e., $\kappa=1$, then we can assume that $\varphi$  is a section of a line bundle
$L\to X$.   Then $N_\J X=L|_Z$, cf.~\eqref{polly2}.
Assume that $\mu\in\GZ_k(X)$ is irreducible.
If  $\varphi$ vanishes identically on $\mu$, then  $M^\varphi\w\mu=\mu$, and hence
$
(\hat c(L)\w M^\varphi\w\mu )_{k-1}=\hat c_1(L)\w\mu.
$  
Otherwise 
$M^\varphi_0\w\mu=0$ and then 
\begin{equation}\label{krut}
 (\hat c(L)\w M^\varphi\w \mu)_{k-1}= M_1^\varphi\w \mu.
\end{equation}
\end{ex}

\section{Variants of the Poincar\'e-Lelong formula}\label{plvariants}

Let $h$ be a meromorphic section of a Hermitian line bundle $L\to X$.   
We say the  $\div h$ {\it intersects $\mu\in\GZ_k(X)$ properly} if for each irreducible component  $\mu_j$   of $\mu$, 
$\div h$ intersects $|\mu_j|$ properly, cf.~Section~\ref{prel}, i.e., $h$ is non-trivial on each $|\mu_j|$. 
We have the following  Poincar\'e-Lelong formula ``on $\mu$'':

\begin{prop}\label{myrstack}
Assume that $h$ is a meromorphic section of $L\to X$ such that $\div h$ intersects  $\mu\in \GZ_k(X)$ properly.
Then $\log|h|^2\cdot \mu$, a priori defined where $h$ is holomorphic and non-zero, extends to a
current of order $0$ on $|\mu|$. 
Moreover, there is a generalized cycle $[\div h]\w\mu$ in $\GZ_{k-1}(X)$ with Zariski support on $|\div h|\cap|\mu|$
such that 
\begin{equation}\label{mysnitt}
dd^c\big(\log|h|^2 \cdot \mu\big)= [\div h]\wedge \mu - \hat c _1(L)\w\mu.
\end{equation}
If $\mu\sim 0$, then $[\div h]\w\mu\sim 0$.
\end{prop}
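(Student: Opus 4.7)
The plan is to reduce to the smooth case via the defining representation of a generalized cycle and then invoke the classical Poincar\'e-Lelong formula. By the decomposition of $\GZ(X)$ into irreducible components (Proposition~\ref{goltupp}) and linearity, I may assume $\mu=\tau_*\alpha$ with $\tau\colon W\to X$ proper, $W$ smooth and connected, $\alpha$ a product of components of Chern forms, and $\tau(W)=|\mu|$. The proper-intersection hypothesis then forces $\tau^*h$ to be a non-trivial meromorphic section of $\tau^*L\to W$: by assumption $h$ is non-trivial on $|\mu|$, and $\tau$ is generically biholomorphic onto its image, so the pullback preserves non-triviality.

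Applying Proposition~\ref{plf} to $\tau^*h$ on the smooth space $W$ I obtain that $\log|\tau^*h|^2$ is of order zero and
$$
dd^c\log|\tau^*h|^2=[\div\tau^*h]-\hat c_1(\tau^*L).
$$
Wedging with the smooth form $\alpha$ and pushing forward by $\tau_*$, which commutes with $dd^c$ and, by \eqref{ingen}, satisfies $\hat c_1(L)\w\tau_*\alpha=\tau_*(\hat c_1(\tau^*L)\w\alpha)$, I define
$$
\log|h|^2\cdot\mu:=\tau_*(\log|\tau^*h|^2\w\alpha),\qquad [\div h]\w\mu:=\tau_*([\div\tau^*h]\w\alpha).
$$
The first is of order zero as the proper pushforward of such a current; the second lies in $\GZ_{k-1}(X)$ because $[\div\tau^*h]$ is a cycle on smooth $W$ and $\alpha$ is a product of Chern forms. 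Its Zariski support is contained in $\tau(|\div\tau^*h|)\subset|\div h|\cap|\mu|$, and formula \eqref{mysnitt} follows by pushing forward the smooth identity above.

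For the final sentence, suppose $\mu=\sum_k(\tau_k)_*(\beta_k\w\alpha_k)$ exhibits $\mu\sim 0$, with $\beta_k$ components of $B$-forms on smooth connected $W_k$. The construction above yields
$$
[\div h]\w\mu=\sum_k(\tau_k)_*\big([\div\tau_k^*h]\w\beta_k\w\alpha_k\big).
$$
For each irreducible component $V_{kj}$ of $\div\tau_k^*h$ with inclusion $i_{kj}\colon V_{kj}\hookrightarrow W_k$ and a Hironaka resolution of singularities $\sigma_{kj}\colon\widetilde V_{kj}\to V_{kj}$, the corresponding term $[V_{kj}]\w\beta_k\w\alpha_k$ equals $(i_{kj}\sigma_{kj})_*(\sigma_{kj}^*i_{kj}^*\beta_k\w\sigma_{kj}^*i_{kj}^*\alpha_k)$. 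Since the pullback of a $B$-form is a $B$-form (as noted in Section~\ref{profet}) and the pullback of a product of Chern forms is again one, every summand is $\sim 0$ by definition, whence $[\div h]\w\mu\sim 0$.

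The main obstacle is the first assertion, namely that $\log|h|^2\cdot\mu$, a priori defined only away from $|\div h|\cap|\mu|$ and on the regular part of $|\mu|$, extends to a current of order zero on all of $X$. This rests on Proposition~\ref{plf} applied on $W$, which in turn uses a smooth modification to handle the singularities of $|\div\tau^*h|$; the proper-intersection hypothesis is precisely what makes $\tau^*h$ non-trivial on $W$ and thereby makes the pullback machinery run.
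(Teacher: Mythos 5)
Your argument follows the paper's proof essentially step for step: decompose $\mu$ into terms $\tau_*\alpha$ with $\tau(W)=|\mu_j|$, pull $h$ back to the smooth connected $W$, apply Proposition~\ref{plf} there, wedge with $\alpha$ and push forward to obtain both the order-zero extension and formula \eqref{mysnitt}, and deduce the last claim by observing that an extra $B$-form factor survives the construction (your explicit resolution of the components of $\div\tau^*h$ just spells out what the paper leaves implicit). The one misstep is the justification that $\tau^*h$ is non-trivial: $\tau\colon W\to|\mu_j|$ is in general only a proper surjection and may have positive-dimensional fibres (e.g.\ a projectivized bundle as in Remark~\ref{kola}), so it is not generically biholomorphic onto its image. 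The conclusion you need still holds, because $h$ is holomorphic and non-vanishing off a subvariety $V'\subset|\mu_j|$ of positive codimension and $\tau^{-1}V'$ is then a proper analytic subset of the connected $W$; this is exactly how the paper argues.
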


We say that  $[\div h]\w\mu$ is the {\it proper intersection} of $\div h$ and $\mu$. 
Choosing a trivial metric on $L$ locally, we see that $[\div h]\w\mu$
 only depends on the divisor
$\div h$ and not on $h$ (since $dd^c (u\cdot \mu)=0$ if $u$ is pluriharmonic). In view of the
last statement of the proposition, $[\div h]\w\mu$ is well-defined in $\B_{k-1}(X)$ for $\mu\in\B_k(X)$.

\begin{proof}
By assumption, $\log|h|^2\cdot \mu$ is generically defined on $|\mu|$.
Each irreducible component $\mu_j$ of $\mu$ is a
finite sum of non-zero generalized cycles $\mu'=\tau_*\alpha$ with $\tau(W)=V:=|\mu_j|$,
see Remark~\ref{planta} and Lemma~\ref{gastub}.
Let us consider such a $\mu'$
and let $V'\subset V$ be a subset of positive codimension such that $h$ is holomorphic and non-vanishing
on $V\setminus V'$. Then 
\begin{equation}\label{anita1}
\log|h|^2\cdot \mu'=\tau_*(\log|\tau^*h|^2\cdot\alpha)
\end{equation}
holds there, and since the right hand side has an extension to $V$ of order $0$ so has
the left hand side. Since  $\tau^{-1}V'$ has positive codimension in $W$, 
$\1_{V'}\tau_*(\log|\tau^*|^2\cdot\alpha)=\tau_*(\1_{\tau^{-1}V'}(\log|\tau^*|^2\cdot\alpha))=0$.
Summing up the first claim of the proposition follows. 

Consider again a $\mu'=\tau_*\alpha$ as above. %
From the usual Poincar\'e-Lelong formula, cf.\ Proposition~\ref{plf}, we have 
\begin{equation}\label{myrslok}
dd^c\big(\log|\tau^*h|^2_{}\cdot \alpha\big)=[\div\tau^*h]\w\alpha-\hat c_1(\tau^*L)\w\alpha
\end{equation}
on $W$.   
Summing up we get \eqref{mysnitt} with
$[\div h ]\w \mu$ defined as the sum of all  $\tau_*([\div \tau^* h]\w\alpha)$.
The last statement of the proposition follows since  $\tau_*([\div \tau^* h]\w\alpha)\sim 0$ if $\alpha$ is of the form
$\beta\w\alpha'$, where $\beta$ is a component of a $B$-form.
\end{proof}

If $h$ is holomorphic,
$
M^h_1\w\mu=\tau_* (M^{\tau^*h}_1\w\alpha)=\tau_*([\div\tau^*h]\w\alpha)
$  
and thus,
cf.~\eqref{krut},  
\begin{equation}\label{propdef}
[\div h]\w\mu = M^h_1\w\mu.
\end{equation}
It follows as in the proof of Theorem~\ref{putig} that 
$\log|h|^2\cdot \mu =\lim_\epsilon\log(|h|^2+\epsilon)\cdot \mu$.

 \smallskip
Now assume that $h=(h_1,\ldots,h_m)$ is a tuple of global sections of $L$ and consider the section
$h$ of $\oplus_1^m L$.  In view of \eqref{spott}   we have
\begin{equation}\label{harpa}
dd^c\big(\log|h|^2\cdot\mu\big)=dd^c\log|h|^2\w\mu,
\end{equation}
where the right hand side is defined by the limit procedure in \eqref{spott}. If $e$ is a local frame for $L$,
then $h=h(e) e$, where $h(e)$ is a tuple of holomorphic functions. Clearly $\log|h(e)|^2$ depends on the choice of
frame but $dd^c\log|h(e)|^2$ does not. Thus 
$$
dd^c\log|h|^2_\s\w\mu := dd^c\log|h(e)|^2\wedge\mu
$$ 
is a well-defined global current which in addition
is independent of the Hermitian metric on $L$.

\begin{remark}\label{rotsak}
Let $U\subset X$ be an open set where we have a local frame $e$ for $L$.  If we choose the
metric on $L$ in $U$ so that $|e|=1$ and equip $E=\oplus_0^m L$ with the induced metric, then
$dd^c\log|\fff|^2_\s\w\mu=dd^c\log|\fff|^2\w\mu$.
\end{remark}

For instance, if $h$ is just one single section, i.e., $m=1$, then 
\eqref{mysnitt} implies that 
$dd^c\big(\log|h|_\s^2\cdot\mu\big)=[\div h]\w\mu.$

\begin{ex}
Let $\theta=dd^c\log (|z_1|^2+|z_2|^2)$ be the generalized cycle in $\mathbb{P}^2$ of Example~\ref{planmedel} 
and let $\fff$ be a section of
$\mathcal{O}(1)$ defining a line through $p=[1:0:0]$. Then $\theta$ has dimension $1$, 
it is irreducible and $|\theta|=\mathbb{P}^2$. Thus
$\div \fff$ intersects $\theta$ properly. We claim that  $[\div \fff]\wedge \theta=[p]$. 
Let $i\colon V\hookrightarrow\Pk^2$ be the line $\div \fff$. Notice that if we consider $z_j$ as sections of
the line bundle $\Ok(1)\to\Pk^2$, then  
$\theta=dd^c\log|h|^2_\s$,  where  $h=(z_1,z_2)$.  Now
$$
[\div \fff]\w\theta=\theta\w[\div \fff]=dd^c\big(\log|h|_\s^2\cdot[V]\big)=i_*dd^c\log|i^*h|_\s,
$$
where the first equality follows from \cite[Ch.~III~Corollary~4.11]{Dem2}
and the second one from \eqref{harpa}. 
In the affinization
where $z_0=1$ we have the frame element $e=z_0$, so in local coordinates $(z_1,z_2)$ we have 
$\log|h(e)|^2=\log (|z_1|^2+|z_2|^2)$; notice that it is 
harmonic on $V\setminus\{p\}$
and has a simple pole at $p$ so that $dd^c\log|i^*h|^2_\s=[p].$
Now the claim follows since $i_*[p]=[p]$.
Notice that $\text{dim}\,|\theta|=2$ while
$\text{dim}\,|[\div h]\wedge \theta|=0$. 
\end{ex}

\section{The $\B$-St\"uckrad-Vogel class}\label{vogelsec} 
Throughout this section $X$ is a compact (reduced) analytic space and 
$\J\to X$ is generated by a finite number of global sections of the line bundle $L\to X$,
to begin with without any specified Hermitian metric. 
For instance, if $X$ is projective, then given $\J\to X$  there is a very ample $L\to X$ such that $L\otimes \J$
is globally finitely generated, see, e.g., \cite[Theorem 1.2.6]{Laz}.  

The classical St\"{u}ckrad-Vogel (SV) algorithm, \cite{SV}, is a way to produce  intersections
by reducing to proper intersections of cycles by divisors. 
The resulting SV-cycles define an element, the SV-class $v(\J,L,X)$,  in $\A(Z)$ that only depends on $\J$ and
the line bundle $L$.  It is related to the Segre class $s(\J,X)$ via van~Gastel's formulas, \cite{gast}, see below.

We shall define an analogous $\B$-SV class $V(\J,L,\mu)$ in $\B(|\mu|\cap Z)$ for any $\mu\in\B(X)$,
and this class will be related to our Segre class $S(\J,\mu)$ via analogues of van~Gastel's
formulas. To motivate our definition we first consider the SV-algorithm on a generalized cycle $\mu\in\GZ_d(X)$:
If $\mu_0:=\1_{X\setminus Z} \mu =0$ then $\J$ vanishes identically on $\mu$ and the algorithm stops directly.
Otherwise, let  $\mu^1,\ldots,\mu^m$ be the irreducible components of $\mu_0$. These
are precisely the irreducible components of $\mu$ that are not contained in $Z$.  
For each $j$ the set of $h\in \Gamma(X, L\otimes \J)$ that 
vanish identically on $\mu^j$ is a proper subspace $V^j$ of the finite-dimensional vector space
$\Gamma(X, L\otimes \J)$.   Thus
each  $h\in \Gamma(X, L\otimes \J)$ in the complement of $\cup_j V^j$ intersects
$\mu_0$ properly, by definition; that is, a {\it generic}  $h$ will do. 
 Let us choose such a section and call it $h_1$.
Next, we consider $\mu_1:=\1_{X\setminus Z} [\div h_1]\w\mu_0 $. 
If $\mu_1$ is empty the algorithm stops.  If not, a generic $h$ intersects 
$\mu_1$     properly. Let us choose such a section and call it $h_2$. 
We proceed
 in this way until 
$\mu_{k}=0$ for some $k\le d$ and the algorithm stops. If 
 $\mu_d:=\1_{X\setminus Z} [\div h_d]\w \mu_{d-1}$ is nonempty, then
 since  $\mu_d$  has dimension $0$, any proper intersection with a divisor $\div h$ will give just $0$, and the
SV-algorithm stops. 

If $\mu_k=0$ for some $k<d$, then we can choose $h_{k+1}, \ldots, h_d$ in an arbitrary way
if we adopt the convention that any $\div h$ intersects the generalized cycle $0$ properly and 
$[\div h]\cdot 0=0$.  We have the following definitions:

\begin{df}
An ordered 
sequence $h=(h_1,h_2,\ldots, h_d)$  of sections of $L\otimes\J$ is a
{\it St\"uckrad-Vogel (SV) sequence on} $\mu\in\GZ_d(X)$ if
$\div h_k$ intersects 
\begin{equation}\label{pluto}
\1_{X\setminus Z}[\div h_{k-1}]\wedge\cdots\wedge\1_{X\setminus Z}[\div h_1]\wedge\1_{X\setminus Z}\mu
\end{equation}
properly,  $k=1,\ldots,d$.
Given a SV-sequence $h$ on $\mu$,
we have the associated\footnote{It would me more correct but somewhat inconvenient to use the
term SV-generalized cycle.}
 \emph{SV-cycle}  
\begin{equation}\label{sv}
v^h\wedge\mu =\sum_{k=0}^d v_k^h\w\mu, 
\end{equation}
where 
$$
v_0^h\w\mu :=\1_Z\mu, \ \
v_k^h\w\mu:=\1_Z[\div h_{k}]\wedge\1_{X\setminus Z}[\div h_{k-1}]\wedge\cdots\wedge\1_{X\setminus Z}[\div h_1]\wedge\1_{X\setminus Z}\mu, \  k\ge 1.
$$
\end{df}
Here we use the convention that $\1_V$ acts on the whole current on its right, i.e., $\1_V a\w b\w \ldots= \1_V(a\w b\w\ldots)$,
cf.~\cite[Sections~3 and 6]{aswy}.  

\begin{ex} 
If $\J$ vanishes identically on $\mu$,  then $v^h=v^h_0=\mu$.  
\end{ex}

If $\div h$ does not intersect $\mu$ properly we can still define  $[\div h]\wedge\mu$ by \eqref{propdef}.
Since $M^h_1\w\mu'=0$ if $h$ vanishes identically on $|\mu'|$ 
we have that 
$
[\div h]\wedge\mu=\sum_j [\div h]\wedge \mu'_j,
$
where $\mu'_j$ are the irreducible components of $\mu$ that $\div h$ intersects properly. 
By this convention therefore
$[\div h]\wedge\mu=[\div h]\wedge\1_{X\setminus Z}\mu$ 
if 
the right hand side is a proper intersection. 
For any sequence
$h=(h_1,\ldots,h_d)$ of sections of $L\otimes \J$ we can thus define  \eqref{sv}     with
\begin{equation}\label{polyp}
v^h_0\w\mu=\1_Z\mu, \quad   v^h_k\wedge\mu={\bf 1}_Z[\div h_k]\w\mu\cdots \w [\div h_1]\wedge \mu,
\quad k=1,2,\ldots d,
\end{equation}
and as long as $h$ is a SV-sequence it is consistent with the previous definition. 
\smallskip
Let $\fff=(\fff_0,\ldots,\fff_m)$ be a sequence of global sections of $L$
that generate $\J$.  
Given $a\in \Pk^m$, $a=[a_0: \cdots:a_m]$,  let $a\cdot \fff=a_0\fff_0+\cdots+a_m\fff_m$ which
is well-defined up to a nonzero constant.  
If  $a=(a_1,\ldots,a_d)\in(\mathbb{P}^m)^d$ is a generic tuple, then $a\cdot \fff=(a_1\cdot \fff, a_2\cdot \fff, \ldots, a_d\cdot \fff)$
is a SV-sequence on $\mu$ and, cf.~\eqref{polyp}, 
\begin{equation}\label{puhu}
v^{a\cdot \fff}\w\mu=\1_Z\mu+ 
\sum_{k\ge 1}\1_Z [\div (a_k\cdot \fff)]\w\ldots\w[\div (a_1\cdot \fff)]\w \mu 
\end{equation}
is the associated SV-cycle.  As observed above,  however,  \eqref{puhu} makes sense for
any $(a_1,\ldots,a_d)\in(\mathbb{P}^m)^d$.

\begin{prop} \label{pilt0} 
Assume that $\mu\in\GZ(X)$.
Then 
\begin{equation}\label{sork}
\int_{ (\Pk^m)^k} [\div (a_k\cdot \fff)]\w\ldots\w[\div (a_1\cdot \fff)]\w \mu \, dV(a) =
(dd^c\log|\fff|^2_\s)^k\w\mu,\quad k=1,2,\ldots,
\end{equation}
where $dV(a)=\wedge_{j=1}^k (dd^c\log|a_j|^2)^m$ is the natural normalized volume form on $(\Pk^m)^k$.
\end{prop}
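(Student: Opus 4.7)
The argument is by induction on $k$, coupling a Crofton-type averaging identity over projective space with the Poincar\'e-Lelong variant of Proposition~\ref{myrstack}.

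\emph{Base case $k=1$.} The heart of the matter is the classical identity
\[
\int_{\P^m}\log|a\cdot v|^2\,dV(a)\;=\;\log|v|^2+C_m,\qquad v\in\C^{m+1}\setminus\{0\},
\]
for a universal constant $C_m$; this follows by unitary invariance after choosing a representative of $a$ on the unit sphere $S^{2m+1}$ and reducing to a classical spherical mean of $\log|a_1|^2$. Applied fibrewise in $x$ with $v=\fff(e)(x)$ in a local frame $e$ of $L$, and using $[\div(a\cdot\fff)]=dd^c\log|a\cdot\fff(e)|^2$ together with an exchange of $dd^c_x$ with the finite parameter integral $\int dV(a)$, one gets
\[
\int_{\P^m}[\div(a\cdot\fff)]\,dV(a)\;=\;dd^c\log|\fff|^2_\s
\]
as currents on $X$. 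Wedging with $\mu$ and noting that the set of $a\in\P^m$ where $\div(a\cdot\fff)$ fails to intersect $\mu$ properly is contained in a finite union of hyperplanes (one per irreducible component of $\mu$), hence has $dV$-measure zero, yields \eqref{sork} for $k=1$.

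\emph{Inductive step.} Fix a generic tuple $(a_1,\ldots,a_{k-1})\in(\P^m)^{k-1}$ so that the iterated proper intersection
\[
T(a_1,\ldots,a_{k-1}):=[\div(a_{k-1}\cdot\fff)]\w\cdots\w[\div(a_1\cdot\fff)]\w\mu
\]
exists in $\GZ(X)$; again this excludes a $dV$-null set. By Proposition~\ref{myrstack} (applied in a local frame in which $|e|\equiv 1$, cf.~Remark~\ref{rotsak}) we have, for generic $a_k$,
\[
[\div(a_k\cdot\fff)]\w T\;=\;dd^c\bigl(\log|a_k\cdot\fff|^2_\s\cdot T\bigr).
\]
Integrating over $a_k$ and pulling $dd^c$ past the finite parameter integral, the single-section averaging identity gives
\[
\int_{\P^m}[\div(a_k\cdot\fff)]\w T\,dV(a_k)\;=\;dd^c\bigl(\log|\fff|^2_\s\cdot T\bigr),
\]
the additive constant $C_m\cdot T$ being killed by $dd^c$ because $T$ is closed. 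Applying Fubini and the inductive hypothesis $\int_{(\P^m)^{k-1}}T\,dV=(dd^c\log|\fff|^2_\s)^{k-1}\w\mu$ then delivers the right-hand side of \eqref{sork}, the final identification being the recursive definition \eqref{recur} of the Monge-Amp\`ere product.

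\emph{Main obstacle.} The genuinely delicate point is the justification of the two current-level operations performed above: exchanging $dd^c$ with the parameter integration, and making sense of the product $\log|\fff|^2_\s\cdot T$ on the portion of $|T|$ which meets $Z$ (where $\log|\fff|^2_\s\equiv-\infty$). The cleanest way around both is to replace $\log|\,\cdot\,|^2_\s$ everywhere by its Demailly regularization $\log(|\,\cdot\,|^2_\s+\epsilon)$: at positive $\epsilon$ every expression is smooth or has order zero, Fubini and the exchange of $dd^c$ with the parameter integral are automatic, and the $\epsilon\to 0$ passage is supplied by \eqref{44}, Theorem~\ref{putig}, and the base-case identity, which together match the parameter-integrated formula with the correctly-defined limiting current $(dd^c\log|\fff|^2_\s)^k\w\mu$.
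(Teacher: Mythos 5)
Your base case is essentially the right idea \emph{and} the right mechanism: the Crofton identity $\int_{\P^m}\log|a\cdot v|^2\,dV(a)=\log|v|^2+C_m$ is indeed the engine behind \eqref{sork}, and it is what \cite[Lemma~6.3]{aswy} (which the paper invokes) encapsulates. The gap is in your inductive step. You need
$$
\int_{(\P^m)^{k-1}} dd^c\big(\log|\fff|^2_\s\cdot T(a_1,\ldots,a_{k-1})\big)\,dV
= dd^c\Big(\log|\fff|^2_\s\cdot \1_{X\setminus Z}\int_{(\P^m)^{k-1}} T\,dV\Big),
$$
and this is not a Fubini statement. The product $\log|\fff|^2_\s\cdot T$ is defined (Proposition~\ref{myrstack}) as the order-zero extension from the set where $\fff\neq 0$, and order-zero extension does not commute with weak limits or with averaging over a parameter; also note that the recursion \eqref{recur} carries the cutoff $\1_{X\setminus Z}$, which your displayed identity silently drops. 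Declaring that ``$\epsilon$-regularization'' repairs this begs the question: at $\epsilon>0$ the currents $dd^c\log(|a\cdot\fff|^2+\epsilon)$ are smooth forms rather than the divisors $[\div(a\cdot\fff)]$, and the results you cite (\eqref{44}, Theorem~\ref{putig}) give convergence for a single fixed section, not uniformly over the family $a\mapsto a\cdot\fff$ and not for the iterated products $[\div(a_k\cdot\fff)]\w\cdots\w[\div(a_1\cdot\fff)]\w\mu$. The commutation of the singular Monge--Amp\`ere-type operation with the average over $a$ \emph{is} the content of the proposition, so it cannot be assumed.

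The idea you are missing is to principalize first and average afterwards. Writing $\mu=\tau_*\alpha$ and arranging $\tau^*\fff=\fff^0\fff'$ with $\fff'$ a \emph{non-vanishing} tuple, one has $[\div(a\cdot\tau^*\fff)]=[D]+[\div(a\cdot\fff')]$, so each iterated intersection upstairs is a sum of proper intersections of divisors of sections of the single line bundle $L_D^{-1}\otimes\tau^*L$ that have no common zeros. For such a non-degenerate tuple the Crofton averaging is the classical, unproblematic statement (\cite[Lemma~6.3]{aswy}), and the resulting mean value $\tau_*\big([D]\w(dd^c\log|\fff'|^2_\s)^{k-1}\w\alpha+(dd^c\log|\fff'|^2_\s)^k\w\alpha\big)$ is identified with $(dd^c\log|\fff|^2_\s)^k\w\mu$ by the already-established formula \eqref{rot}, since $dd^c\log|\fff'|^2_\s=\hat\omega$ for a suitable local metric (Remark~\ref{rotsak}). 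All the delicate interchanges then happen only for smooth forms and proper intersections on $W$, which is why the paper's route closes and yours, as written, does not.
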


\begin{proof}
We may assume that $\mu=\tau_*\alpha$  where $\tau\colon W\to X$ and $W$  is connected. Then $\mu$ is irreducible.
If $\J$ vanishes identically on $\mu$, then $\fff\equiv 0$ on $\mu$ and by definition both sides
of \eqref{sork} vanish.   
Thus we can assume that $\tau^*\J$ is nontrivial on $W$, 
$\tau^*\J$ is principal and that the exceptional divisor $D$ is defined by the section $\fff^0$ of the line bundle
$L_D\to W$.  Then $\tau^*\fff=\fff^0\fff'$ where $\fff'=(\fff_0',\ldots,\fff_m')$ is a non-vanishing tuple of
sections of $L_D^{-1}\otimes \tau^*L$.   Notice that
$$
dd^c\log|a_j\cdot \tau^*\fff|^2_\s=dd^c\log|\fff^0|^2_\s +dd^c\log|a_j\cdot \fff'|^2_\s=[D]+[\div (a_j\cdot \fff')].
$$
Since $\fff'$ is non-vanishing on $|D|$, as in \cite[Eq.~(6.3)]{aswy},  
for generic  $a_j$ we have   
\begin{multline*}
 [\div (a_k\cdot \fff)]\w\ldots\w[\div (a_1\cdot \fff)]\w \mu =\\
 \tau_*\big([D]\w[\div(a_{k-1}\cdot \fff')]\w \cdots\w[\div(a_1\cdot \fff')]\w\alpha + [\div(a_{k}\cdot \fff')]\w 
 \cdots\w[\div(a_1\cdot \fff')]\w\alpha \big),
 \end{multline*}
where all intersections are proper. 
By \cite[Lemma~6.3]{aswy} the left hand side of \eqref{sork}  is therefore equal to
\begin{equation}\label{samband}
 \tau_*\big([D]\w (dd^c\log|\fff'|^2_\s)^{k-1}\w\alpha+(dd^c\log|\fff'|^2_\s)^k \w\alpha \big).
\end{equation} 
Now assume that we locally have a flat metric on $L$.  
With the notation in the proof of Theorem~\ref{putig} we then have 
$dd^c\log|\fff'|_\s^2=\hat\omega$ since $\fff'$ is a tuple of sections of
$L_D^{-1}$ and $|\fff'|=1$, cf.~\eqref{pilla} below. 
From  \eqref{rot} we can therefore deduce that 
\eqref{samband} is equal to the right hand side of \eqref{sork}. 
\end{proof}

\begin{remark}
Proposition~\ref{pilt0} is similar to \cite[Theorem~6.2]{aswy}. The analogues of the identities (6.8) and (6.9) in that
theorem hold in the present situation as well; after adaption to the present situation the proof in \cite{aswy} goes through. 
\end{remark}

\begin{df}
Given a line bundle $L$ and a tuple $\fff=(\fff_0,\ldots, \fff_m)$ of sections of $L$ 
that generate $\J$  and $\mu\in\GZ(X)$ we define the generalized cycle
$$
M^{L, \fff}\w\mu=\1_Z\mu+\sum_{k\ge 1} \1_Z (dd^c\log|\fff|^2_\s)^k\w \mu.
$$
\end{df}

\noindent It follows from   \eqref{puhu} and \eqref{sork} that  $M^{L, \fff}\w\mu$ is a mean value of 
SV-cycles on $\mu$.  

\smallskip
Notice that in general, the subspace of $\Gamma(X, L)$ generated by $\fff_0, \dots,\fff_m$ is
proper.  Nevertheless,  the class of $M^{L,\fff}\wedge\mu$ in $\B(X)$ is independent of the choice of tuple $\fff$:

\begin{prop}\label{pilt}
If $g$ is another tuple of sections of $L$ that generate $\J$, $\mu,\mu'\in\GZ(X)$ and $\mu'\sim \mu$, then 
$
M^{L,\fff}\w\mu\sim M^{L,g}\w\mu'.
$
\end{prop}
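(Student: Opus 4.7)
The plan is to reduce Proposition~\ref{pilt} to Theorem~\ref{putig} via a van~Gastel-type identity. Fix an auxiliary Hermitian metric on $L$ and view $\fff$ as a section of $E_{\fff}:=\oplus_0^{m} L$ with the induced direct sum metric, so that the generalized cycle $M^{\fff}\wedge\mu$ of Theorem~\ref{putig} is defined; treat $g$ similarly as a section of $E_g$. Setting $\omega:=\hat c_1(L)$, the key intermediate claim is the identity
\begin{equation*}
M^{L,\fff}_k\wedge\mu=\sum_{j=0}^k\binom{k}{j}\omega^{k-j}\wedge M^{\fff}_j\wedge\mu\qquad\text{in }\GZ(X),
\end{equation*}
and the analogous identity for $g$.

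Granted this identity the proposition is immediate. Since $\fff$ and $g$ both generate $\J$ and $X$ is compact, local holomorphic relations $g_j=\sum_i h_{ji}\fff_i$ and $\fff_i=\sum_j \tilde h_{ij}g_j$ together with compactness yield global two-sided bounds $c|\fff|\leq|g|\leq C|\fff|$. Theorem~\ref{putig}~(ii) and~(iii) then give $M^{\fff}_j\wedge\mu\sim M^{g}_j\wedge\mu'$ for each $j$, and wedging with $\omega^{k-j}$ respects $\sim$ by Proposition~\ref{pluttens}. Applying the identity to both tuples therefore yields $M^{L,\fff}_k\wedge\mu\sim M^{L,g}_k\wedge\mu'$ for every $k$, as required.

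To establish the identity I would use the mean-value formula of Proposition~\ref{pilt0} combined with the Poincar\'e-Lelong expansion $[\div(a\cdot\fff)]=\omega+dd^c\log|a\cdot\fff|^2$ of Proposition~\ref{plf}. For generic $(a_1,\ldots,a_k)\in(\Pk^m)^k$ the product $\bigwedge_{j}[\div(a_j\cdot\fff)]\wedge\mu$ is a proper intersection, so substituting and expanding gives
\begin{equation*}
\bigwedge_{j}[\div(a_j\cdot\fff)]\wedge\mu=\sum_{S\subseteq\{1,\ldots,k\}}\omega^{k-|S|}\wedge\bigwedge_{j\in S}dd^c\log|a_j\cdot\fff|^2\wedge\mu.
\end{equation*}
Integrating term by term over $(\Pk^m)^k$, the $a_j$ with $j\notin S$ integrate out to unit factors by normalization, and a Fubini argument carried out after the $\epsilon$-regularization $\log(|a\cdot\fff|^2+\epsilon)$ --- pulling $dd^c$ through the integration in each $a_j$ and then passing to the limit via \eqref{44} on $\mu$ --- identifies $\int\bigwedge_{j\in S}dd^c\log|a_j\cdot\fff|^2\wedge\mu\,dV(a)$ with $(dd^c\log|\fff|^2)^{|S|}\wedge\mu$. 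Grouping by $j:=|S|$ produces
\begin{equation*}
(dd^c\log|\fff|^2_{\s})^k\wedge\mu=\sum_{j=0}^k\binom{k}{j}\omega^{k-j}\wedge(dd^c\log|\fff|^2)^j\wedge\mu,
\end{equation*}
and applying $\1_Z$ to both sides, commuted past the smooth factors $\omega^{k-j}$ by Proposition~\ref{nylikhet}, yields the desired identity.

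The main obstacle is the rigorous justification of the wedge expansion and the Fubini argument at the level of currents, i.e.\ a clean regularization/limit argument showing that the mean value of the products of Lelong currents $[\div(a_j\cdot\fff)]$ splits as indicated. Once these analytic technicalities are handled the proof reduces entirely to Theorem~\ref{putig} through the van~Gastel identity.
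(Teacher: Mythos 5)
Your overall strategy (reduce the statement to Theorem~\ref{putig} via a van~Gastel-type identity) is viable --- indeed the paper later proves exactly such an identity, Theorem~\ref{Bgastel} --- but the identity you assert is false, and the way you derive it is where the error enters. The correct relation, eq.~\eqref{gastel12}, reads $M^{L,\fff}\w\mu=\sum_{j}\big(1/(1-\hat\omega_L)\big)^{j}\w M^{\fff}_j\w\mu$, whose degree-$k$ part is $M^{L,\fff}_k\w\mu=\sum_{j\ge1}\binom{k-1}{j-1}\,\hat\omega_L^{\,k-j}\w M^{\fff}_j\w\mu$, with coefficient $\binom{k-1}{j-1}$ rather than your $\binom{k}{j}$. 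One sees this on the principalization $\tau\colon W\to X$, $\tau^*\fff=\fff^0\fff'$: by \eqref{pilla} one has $dd^c\log|\fff'|^2_\s=\hat\omega+\tau^*\hat\omega_L$, so $M^{L,\fff}_k\w\mu=\tau_*\big([D]\w(\hat\omega+\tau^*\hat\omega_L)^{k-1}\w\alpha\big)$ carries the binomial $(k-1)$-st power, while $M^{\fff}_j\w\mu=\tau_*([D]\w\hat\omega^{j-1}\w\alpha)$; matching coefficients gives $\binom{k-1}{j-1}$. Concretely, for $\fff=(z_1z_2,\,z_1z_3)$ on $\P^3$ with $L=\Ok(2)$ one has $M^{\fff}_1=[z_1=0]\neq0$, and your formula for $k=2$ would produce $2\,\hat\omega_L\w[z_1=0]$ where the true answer has $\hat\omega_L\w[z_1=0]$; the two sides genuinely differ as elements of $\GZ(\P^3)$, not merely as representatives.

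The conceptual source of the error is the distributive expansion $\bigwedge_j[\div(a_j\cdot\fff)]\w\mu=\sum_S\omega^{k-|S|}\w\bigwedge_{j\in S}dd^c\log|a_j\cdot\fff|^2\w\mu$. These are not ordinary exterior products of closed currents: they are the recursively defined proper-intersection/Monge--Amp\`ere products, built with $\1_{X\setminus Z}$-truncations at each step, and they do not obey the distributive law with respect to splitting a potential as (singular) $+$ (smooth). On the blow-up this is visible in the proof of Proposition~\ref{pilt0}: the mean value of $[\div(a_k\cdot\fff)]\w\cdots\w[\div(a_1\cdot\fff)]\w\mu$ contains exactly \emph{one} factor $[D]$ (times $(\hat\omega+\tau^*\hat\omega_L)^{k-1}$), whereas your subset expansion implicitly counts contributions as if each of the $k$ factors could independently contribute its ``divisor part'', which is what inflates $\binom{k-1}{j-1}$ to $\binom{k}{j}$. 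By contrast, the paper's proof of Proposition~\ref{pilt} does not pass through $M^{\fff}$ at all: it writes $M^{L,\fff}_k\w\mu=\tau_*\big([D]\w(dd^c\log|\fff'|^2_\s)^{k-1}\w\alpha\big)$ and observes that for another generating tuple $g$ one has $\tau^*g=\fff^0g'$ with the \emph{same} $\fff^0$, so $dd^c\log|g'|^2_\s-dd^c\log|\fff'|^2_\s=dd^c\log(|g'|^2_\s/|\fff'|^2_\s)$ is a globally $dd^c$-exact smooth form (a $B$-form), whence the classes agree; the dependence on $\mu$ versus $\mu'$ is handled by taking $\alpha=\beta\w\alpha'$ with $\beta$ a component of a $B$-form. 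If you replace your identity by the correct one (proved via \eqref{buksvin}), the remainder of your argument --- the compactness argument giving $|\fff|\sim|g|$, Theorem~\ref{putig}~(ii)--(iii), and Proposition~\ref{pluttens} to wedge with $\hat\omega_L^{\,k-j}$ --- does go through.
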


\begin{proof}
We first consider $\mu$ and keep the notation from the proof of Proposition~\ref{pilt0}.   
If $\J$ vanishes on $|\mu|$, then $M^{L,\fff}\w\mu= M^{L,\fff}_0\w\mu=\mu$. Thus we assume that
$\tau^*\fff=\fff^0\fff'$ on $W$ as usual. 
Since $\fff'$ is a non-vanishing  tuple of sections
of $L_D^{-1}\otimes\tau^*L$,  $1/|\fff'|^2_\s$  is a metric on $L_D^{-1}\otimes\tau^*L$
and hence $dd^c\log|\fff'|^2_\s$ is a representative of the first Chern class
$c_1(L_D^{-1}\otimes\tau^*L)$.
From \eqref{samband} and \eqref{linalg}   we have
\begin{equation}\label{pilt2}
M^{L,\fff}_k\w\mu=\1_Z (dd^c\log|\fff|^2_\s)^k\w\mu=\tau_*\big([D]\w (dd^c\log|\fff'|^2_\s)^{k-1}\w\alpha\big), \quad k=1,2,\ldots.
\end{equation}
Now $\tau^*g=\fff^0 g'$, where $g'$ also is a tuple of sections of $L_D^{-1}\otimes\tau^*L$, and hence
$dd^c\log|g'|^2_\s$ and $dd^c\log|\fff'|^2_\s$  differ by a $B$-form;  in fact the difference is 
$dd^c$  of the global function $\log(|g'|^2_\s/|\fff'|^2_\s)$.  Thus the class in $\B(X)$ is independent
of the choice of tuple. Finally, if $\beta$ is a component of a $B$-form and $\alpha=\beta\w\alpha'$, then 
\eqref{pilt2} is $\sim 0$ in $\GZ(X)$.  
\end{proof}

In view of Remark~\ref{rotsak} we have that 
$M^{L,\fff}\w\mu=M^\fff\w\mu$ in $U\subset X$ for a suitable metric on $L$ on $U$.
Therefore local statements that hold for $M^\fff\w\mu$ 
must hold for $M^{L,\fff}\w\mu$ as well:
For instance, if $\gamma$ is a  component of a Chern or Segre form,
then by \eqref{samba2}, 
\begin{equation}\label{glattis}
M^{L,\fff}\w(\gamma\w\mu)=\gamma\w M^{L,\fff}\w\mu.
\end{equation}
If $h\colon X'\to X$ is proper and $\fff$ is a tuple of sections of $L$ that generate
$\J\to X$, then $h^*\fff$ is a tuple of sections of $h^*L$ that generate $h^*\J\to X'$.  If $\mu\in\GZ(X')$, then
by Theorem~\ref{putig}~(iv), 
\begin{equation}\label{plastat}
h_* \big(M^{h^*L, h^*\fff}\w\mu'\big)=M^{L,\fff}\w h_*\mu'.
\end{equation}

\smallskip

In view of Proposition~\ref{pilt} the following definition makes sense.

\begin{df}
Assume that $L\to X$ has sections that generate $\J$ globally. For 
$\mu\in\B(X)$ we let  $V(\J,L,\mu)$, the {\it $\B$-SV class},
be the class in $\B(Z\cap|\mu|)$ defined by 
 $M^{L,\fff}\w\hat\mu$ for a tuple of generators $\fff$ and a representative $\hat\mu\in \GZ(X)$  of
$\mu$.
\end{df}

\smallskip

We now relate the $\B$-SV class to the $\B$-Segre class in analogy with van~Gastel's formulas \cite{gast}, see \eqref{gastel1} below.
To this end we first give a $\GZ$-variant and therefore choose a Hermitian metric.

\begin{thm}\label{Bgastel} 
Let $\fff=(\fff_0,\ldots,\fff_m)$ be a tuple of sections of $L$ that generate $\J$.  Assume that we have a Hermitian
metric on $L$ with first Chern  form $\hat\omega_L$
and consider $\fff$ as a section of the Hermitian vector bundle $E=\oplus_0^m L$. 
For $\mu\in\GZ(X)$ we have 
\begin{equation}\label{gastel12}
M^{L,\fff}\w\mu=\sum_{j\ge 0}\Big(\frac{1}{1-\hat\omega_L}\Big)^j\w M^{\fff}_j\w\mu
\end{equation}
and
\begin{equation}\label{gastel22}
M^{\fff}\w\mu=\sum_{j\ge 0}\Big(\frac{1}{1+\hat\omega_L}\Big)^j\w M^{L,\fff}_j
\w\mu.
\end{equation}
\end{thm}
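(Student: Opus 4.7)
The plan is to reduce everything to an explicit calculation on a single modification $\tau\colon W\to X$ and then recognize the two relations as two instances of one geometric generating-series identity.

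\medskip

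First I would assume $\mu=\tau_*\alpha$ with $W$ smooth and connected. If $\tau^*\J$ vanishes identically on $W$, both $M^\fff\w\mu$ and $M^{L,\fff}\w\mu$ reduce to $\mu=\1_Z\mu$ and both formulas hold trivially (only the $j=0$ term contributes on each right-hand side). Otherwise, after passing to a further modification if necessary, I may assume $\tau^*\J$ is principal, so that $\tau^*\fff=\fff^0\fff'$, where $\fff^0$ is a section of the line bundle $L_D$ associated with the exceptional divisor $D$ and $\fff'$ is a nonvanishing tuple of sections of $\tau^*L\otimes L_D^{-1}$. Give $L_D$ the metric inherited from $\tau^*E=\oplus_0^m\tau^*L$ via $\fff'$, so that $|\fff^0|=|\tau^*\fff|$ and hence $|\fff'|\equiv 1$; set $\hat\omega_D:=-\hat c_1(L_D)$.

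\medskip

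The key identification is $dd^c\log|\fff'|^2_\s=\tau^*\hat\omega_L+\hat\omega_D$. Indeed, on a local frame $e$ of $\tau^*L\otimes L_D^{-1}$ we have $\log|\fff'|^2=\log|e|^2+\log|\fff'|^2_\s$; since $|\fff'|^2\equiv 1$, applying $dd^c$ and using $-dd^c\log|e|^2=\hat c_1(\tau^*L\otimes L_D^{-1})=\tau^*\hat\omega_L+\hat\omega_D$ gives the claim. Combining this with \eqref{trott} and \eqref{pilt2}, I get
\begin{equation*}
M^{\fff}_k\w\mu=\tau_*\big([D]\w\hat\omega_D^{k-1}\w\alpha\big), \qquad
M^{L,\fff}_k\w\mu=\tau_*\big([D]\w(\tau^*\hat\omega_L+\hat\omega_D)^{k-1}\w\alpha\big),
\end{equation*}
for $k\ge 1$, while $M^{\fff}_0\w\mu=M^{L,\fff}_0\w\mu=\1_Z\mu$.

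\medskip

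For \eqref{gastel12}, I pull the operator $(1-\hat\omega_L)^{-j}$ through $\tau_*$ and sum, using the elementary identity $\sum_{j\ge 1}A^{-j}B^{j-1}=(A-B)^{-1}$ with $A=1-\tau^*\hat\omega_L$, $B=\hat\omega_D$: then $A-B=1-(\tau^*\hat\omega_L+\hat\omega_D)$ and
\begin{equation*}
\sum_{j\ge 1}\Big(\tfrac{1}{1-\hat\omega_L}\Big)^j\w M^\fff_j\w\mu
=\tau_*\Big([D]\w\sum_{k\ge 0}(\tau^*\hat\omega_L+\hat\omega_D)^k\w\alpha\Big)
=\sum_{k\ge 1}M^{L,\fff}_k\w\mu,
\end{equation*}
and adding the $j=0$ term $\1_Z\mu$ on each side gives \eqref{gastel12}. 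For \eqref{gastel22} I run the same calculation with the roles of the two sequences swapped: applying the same identity with $A=1+\tau^*\hat\omega_L$, $B=\tau^*\hat\omega_L+\hat\omega_D$ yields $A-B=1-\hat\omega_D$ and hence
\begin{equation*}
\sum_{j\ge 1}\Big(\tfrac{1}{1+\hat\omega_L}\Big)^j\w M^{L,\fff}_j\w\mu
=\tau_*\Big([D]\w\sum_{k\ge 0}\hat\omega_D^k\w\alpha\Big)
=\sum_{k\ge 1}M^{\fff}_k\w\mu,
\end{equation*}
giving \eqref{gastel22}.

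\medskip

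The only real obstacle is the bookkeeping in Step~2: one must verify carefully that the metric on $L_D$ induced from $E=\oplus_0^m L$ is the one appearing in both $M^\fff$ (via Theorem~\ref{putig}) and $M^{L,\fff}$ (via Proposition~\ref{pilt}), and that with this choice $dd^c\log|\fff'|^2_\s$ really equals $\tau^*\hat\omega_L+\hat\omega_D$. Once this is in place, the two identities are two specializations of the same rational identity in $\tau^*\hat\omega_L$ and $\hat\omega_D$.
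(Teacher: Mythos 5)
Your proof is correct and follows essentially the same route as the paper's: reduce to $\mu=\tau_*\alpha$ with $\tau^*\J$ principal, establish $dd^c\log|\fff'|^2_\s=\tau^*\hat\omega_L+\hat\omega_D$ (the paper's \eqref{pilla}), and then compare the resulting expressions $\tau_*([D]\w\hat\omega_D^{k-1}\w\alpha)$ and $\tau_*([D]\w(\tau^*\hat\omega_L+\hat\omega_D)^{k-1}\w\alpha)$ via the geometric-series identity. The only cosmetic difference is that you package the series manipulation as the symmetric identity $\sum_{j\ge 1}A^{-j}B^{j-1}=(A-B)^{-1}$ applied twice, where the paper carries out the first computation explicitly and declares the second ``similar.''
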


\begin{proof}
Let us assume that $\mu=\tau_*\alpha$ where $\tau\colon W\to X$ and $W$ is connected.  
If $\J$ vanishes identically on $\mu$ then $M^\fff\w\mu=M^\fff_0\w\mu=\mu$ and
$M^{L,\fff}\w\mu=M^{L,\fff}_0\w\mu=\mu$ and thus \eqref{gastel12} and \eqref{gastel22}
are both trivially true. 

We can thus assume that $\tau^*\fff=\fff^0\fff'$,  where $\fff'$ is a non-vanishing tuple of sections
of $L_D^{-1}\otimes \tau^* L$,    or equivalently, a non-vanishing section of $\tau^*E$. 
As in the proof of Theorem~\ref{putig}  we let $\hat\omega=-\hat c_1(L_D)$. 
Let $e^{-\psi}$ be the induced metric on $L_D^{-1}\otimes\tau^*L$. Then, cf.~the proof of Theorem~\ref{putig}, 
$1=|\fff'|^2=|\fff'|_\s e^{-\psi}$ so that $|\fff'|^2_\s=e^{\psi}$ and hence
\begin{equation}\label{pilla}
dd^c\log|\fff'|^2_\s=dd^c\psi=\hat\omega+\tau^*\hat\omega_L.
\end{equation}
It follows from \eqref{pilt2} and \eqref{pilla} 
that\footnote{Notice that $\hat\omega+\tau^*\hat\omega_L$ is independent of the
metric on $L$.}
\begin{equation}\label{buksvin}
M^{L,\fff}\wedge\mu=\tau_*\big([D]\w\frac{1}{1-\hat\omega-\tau^*\hat\omega_L}\wedge\alpha\big).
\end{equation}
 We have, cf.\ \eqref{trott},  
\begin{multline*}
\sum_{j\ge 0}\Big(\frac{1}{1-\hat\omega_L}\Big)^j\w M^\fff_j\w\mu=
\sum_{j\ge 0}\Big(\frac{1}{1-\hat\omega_L}\Big)^{j+1}
\w \tau_*\big([D]\w\hat\omega^j\w\alpha)=\\
\frac{1}{1-\hat\omega_L}\w \tau_*\Big([D]\w\sum_{j\ge 0}
\Big(\frac{\hat\omega}{1-\tau^*\hat\omega_L}\Big)^j\w\alpha\Big)=\\
\frac{1}{1-\hat\omega_L}\w\tau_*\Big([D]\w\frac{1-\tau^*\hat\omega_L}{1-\tau^*\hat\omega_L-\hat\omega}\w\alpha\Big)=
M^{L,\fff}\w\mu.
\end{multline*}
Thus \eqref{gastel12} follows,  and 
\eqref{gastel22} is proved in a similar way. 
\end{proof}

\begin{cor}
If $\mu\in\B_k(X)$ and $\omega_L=c_1(L)$ we have 
\begin{equation}\label{gastel100}
V(\J,L,\mu) =\sum_{j\ge 0}\Big(\frac{1}{1-\omega_L}\Big)^j\w S_j(\J,\mu), \ \
S(\J,\mu)=\sum_{j\ge 0}\Big(\frac{1}{1+\omega_L}\Big)^j\w V_j(\J,L,\mu).
\end{equation}
\end{cor}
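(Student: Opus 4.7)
The plan is to derive the corollary directly from Theorem~\ref{Bgastel} by taking the identities of generalized cycles established there and passing to equivalence classes in $\B$. First I would fix a Hermitian metric on $L$ with first Chern form $\hat\omega_L$; by \eqref{prom} (combined with Proposition~\ref{pluttens}), the induced operator $\hat\omega_L\w$ descends to $c_1(L)\w = \omega_L\w$ on $\B(X)$ independently of the choice of metric. Next I would pick a tuple $\fff=(\fff_0,\ldots,\fff_m)$ of sections of $L$ generating $\J$ (which exists by hypothesis), equip $E=\oplus_0^m L$ with the induced metric, and choose a representative $\hat\mu\in\GZ_k(X)$ of $\mu\in\B_k(X)$.

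Applying Theorem~\ref{Bgastel} to $\hat\mu$ then yields equalities \eqref{gastel12} and \eqref{gastel22} in $\GZ(X)$. The geometric series $\sum_{j\ge 0}\bigl(1/(1\mp\hat\omega_L)\bigr)^j$ are finite when applied in any fixed total bidegree, so they make sense termwise both in $\GZ$ and in $\B$. I would then pass to classes in $\B(Z\cap|\mu|)$: by definition $M^{L,\fff}\w\hat\mu$ represents $V(\J,L,\mu)$ and $M^{\fff}_j\w\hat\mu$ represents $S_j(\J,\mu)$, and $\hat\omega_L\w$ descends to $\omega_L\w$ by the observation above, which immediately gives \eqref{gastel100}.

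What remains is verifying that both sides of \eqref{gastel100} are independent of the auxiliary choices. Independence on the metric on $L$ and on the particular tuple $\fff$ for the left-hand sides are handled by Proposition~\ref{pilt}; for the right-hand sides they follow from Proposition~\ref{pluttens} (for the $\omega_L$-factors) and Theorem~\ref{putig}$(iii)$ (for the $S_j$-factors, since any two tuples generating $\J$ have comparable norms). Independence of the representative $\hat\mu$ is Theorem~\ref{putig}$(ii)$ for $M^{\fff}_j\w\hat\mu$ and, via Remark~\ref{rotsak} or Proposition~\ref{pilt}, also for $M^{L,\fff}\w\hat\mu$.

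There is no real obstacle here: Theorem~\ref{Bgastel} has done all the analytic work, and the corollary is essentially a formal consequence once one checks that every ingredient descends cleanly to $\B$. The only mildly delicate bookkeeping point is to make sure that the Neumann series truncates in each fixed dimension, so that the right-hand sides of \eqref{gastel100} are well-defined elements of $\B(Z\cap|\mu|)$; this is immediate because $\hat\omega_L$ has bidegree $(1,1)$ and both $S_j(\J,\mu)$ and $V_j(\J,L,\mu)$ lie in $\B_{k-j}$.
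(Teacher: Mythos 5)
Your proof is correct and follows exactly the route the paper intends: the corollary is stated there without proof as an immediate consequence of Theorem~\ref{Bgastel}, obtained by passing from the $\GZ$-level identities \eqref{gastel12} and \eqref{gastel22} to classes in $\B$, with the well-definedness of all ingredients supplied by Proposition~\ref{pluttens}, Proposition~\ref{pilt}, and Theorem~\ref{putig}. You have simply made explicit the routine bookkeeping the paper leaves to the reader; nothing is missing.
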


\begin{remark}\label{fixmov}
Suppose that $\mu=\1_X$. 
Fix  $k\ge 0$ and consider the decomposition, cf.~\eqref{deko},
$
 M_k^{L,\fff}=M_{k,fix}^{L,\fff} + M_{k,mov}^{L,\fff}
$
of the component $M_k^{L,\fff}$ of codimension $k$.
Since $M_k^{L,\fff}$ is obtained as a mean value of 
$v_k^{a\cdot \fff}$, which are cycles of pure codimension $k$,
it is clear that any irreducible $k$-cycle $V$ of $X$ that occurs in all generic SV-cycles must
appear in $M_{k,fix}^{L,\fff}$. In the literature, such a cycle $V$ is called a fixed component;
any other component in a generic  SV-cycle is called a moving component.
Since the Zariski support of the irreducible components
of $M_{k,mov}^{L,\fff}$ have codimension strictly smaller than $k$,
they must be mean values of moving components of
$v_k^{a\cdot \fff}$.
It follows from  \eqref{gastel100} 
that  the fixed components of
$V_{k}(\J,L,X)$ and $S_{k}(\J,X)$ are the same, cf.\ \eqref{deko2}.    
\end{remark}

\begin{ex}\label{vogelpn}  
Assume that $X=\P^{d+\kappa}_{[z_0:\cdots :z_d:w_1:\cdots :w_\kappa]}$, 
let $L:=\Ok(1)$, and let  $i\colon Z\to X$  be  the linear subspace $\{w_1=\ldots=w_\kappa=0\}$.
If $\J=\J(w_1,\ldots,w_\kappa)$, then 
$Z=Z_\J$ is a smooth (hence regular) embedding defined by the section
$w$ of $E=\oplus_1^\kappa L$.  Thus $E|_Z\simeq N_\J X$,
 cf.~\eqref{polly2}. We want to compute the $\B$-Gysin mapping \eqref{bgysin},
 or more precisely \eqref{jupiter},
in this case.
If we equip $L$ with the Fubini-Study  
metric $\hat\omega_L=dd^c\log(|z|^2+|w|^2)_\circ$, then 
$\hat c(L)=1+\hat\omega_L$,  and  therefore
\begin{equation}\label{kokett}
\hat c(N_\J X)=(1+\hat\omega_L)^\kappa.
\end{equation}
Let $\mu\in\GZ_k(X)$. 
By \eqref{gastel22} 
\begin{multline}\label{stork}
\big(\hat c(N_\J X)\w M^w\w\mu)_{k-\kappa}= \big((1+\hat{\omega}_L)^\kappa\w M^w\w\mu\big)_{k-\kappa}=\\
\big(\sum_{j\ge 0}(1+\hat{\omega}_L)^{\kappa-j}\w M^{L,w}_j\w\mu\big)_{k-\kappa}
=\sum_{j\ge 0}\hat{\omega}^{\kappa-j}_L\w  M^{L,w}_j\w\mu. 
\end{multline}  
By \eqref{stork} one can thus reduce the computation of \eqref{jupiter} 
to find  $M^{L,w}_j\w\mu$,  which in turn can be obtained as mean values of 
generic SV-cycles.  
\end{ex}

From  Theorem~\ref{Bgastel}  and Lemma~\ref{spe} we have

\begin{prop}\label{proppfull}
For  each $x\in X$, 
$
\mult_x (M_k^{L,\fff}\wedge\mu)=\mult_x (M_k^{\fff}\wedge\mu).    
$
\end{prop}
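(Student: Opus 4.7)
The strategy is to take the bidegree $(k,k)$-component of the van Gastel identity \eqref{gastel12} of Theorem \ref{Bgastel} and observe that it expresses $M^{L,\fff}_k\wedge\mu$ as $M^{\fff}_k\wedge\mu$ plus a sum of terms each of which carries a strictly positive power of $\hat c_1(L)$. Lemma \ref{spe} will then kill the multiplicity of each correction term at every point.

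Concretely, using the series expansion $(1-\hat\omega_L)^{-j} = \sum_{\ell\geq 0}\binom{j+\ell-1}{\ell}\hat\omega_L^{\ell}$ in \eqref{gastel12} and matching bidegrees, I would obtain, for $k\geq 1$,
\begin{equation*}
M^{L,\fff}_k\wedge\mu \;=\; M^{\fff}_k\wedge\mu \;+\; \sum_{j=1}^{k-1}\binom{k-1}{k-j}\,\hat c_1(L)^{k-j}\wedge M^{\fff}_j\wedge\mu
\end{equation*}
as an equality in $\GZ_{*}(X)$. Indeed the $j=k$ summand contributes $M^{\fff}_k\wedge\mu$ (coefficient $\binom{k-1}{0}=1$), and the $j=0$ summand vanishes because the $(k,k)$-component of $1$ is zero for $k\geq 1$. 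The case $k=0$ is trivial since $M^{L,\fff}_0\wedge\mu = \mathbf{1}_Z\mu = M^{\fff}_0\wedge\mu$.

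For each $1\leq j\leq k-1$ I would rewrite the summand as $\hat c_1(L)\wedge\nu_j$, where
\begin{equation*}
\nu_j \;:=\; \hat c_1(L)^{k-j-1}\wedge M^{\fff}_j\wedge\mu \;\in\; \GZ_{*}(X),
\end{equation*}
using that $\GZ$ is closed under wedging with components of Chern forms (the remarks after \eqref{sedan}) and under $M^{\fff}_j\wedge(\cdot)$ (Theorem \ref{putig}(i)). Lemma \ref{spe}, applied with $\gamma=\hat c_1(L)$ and $\mu'=\nu_j$, then gives $\mult_x\bigl(\hat c_1(L)\wedge\nu_j\bigr)=0$ for every $x\in X$. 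Summing and using additivity of $\mult_x$ yields the desired equality. The only mildly technical step is the bidegree bookkeeping in the expansion; the vanishing is an immediate appeal to Lemma \ref{spe}, so no serious obstacle arises.
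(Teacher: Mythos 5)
Your proof is correct and is essentially the paper's own argument: the paper derives Proposition~\ref{proppfull} directly from Theorem~\ref{Bgastel} and Lemma~\ref{spe}, exactly the two ingredients you use. You have merely made explicit the bidegree bookkeeping in expanding \eqref{gastel12}, which checks out (the $j=k$ term gives $M^{\fff}_k\wedge\mu$ and every other contributing term carries a positive power of $\hat c_1(L)$, hence has vanishing multiplicity by Lemma~\ref{spe}).
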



For $\mu\in \GZ_d(X)$ we let  
$$
\deg_L\mu:=\int_X \hat\omega_L^d\wedge\mu,
$$
where $\hat\omega_L$ is any representative of $c_1(L)$; 
by Stokes' theorem it is well-defined.
Moreover, in view of \eqref{kusin} it only depends on the image of $\mu$ in $\B_d(X)$
and so $\deg_L$ is well-defined on $\B(X)$.
If $\mu$ is  a cycle,
then $\deg_L\mu$ is the usual degree of $\mu$ with respect to $L$.
The degree is indeed the mass with respect to $L$ of $\mu$, and 
we  have the following mass formula:

\begin{prop}
If  $\J\to X$ is generated by the  tuple $\fff=(\fff_0,\ldots,\fff_m)$ of sections of   $L\to X$ and 
$\mu\in\GZ_d(X)$, then 
\begin{equation}\label{bezout2}
\deg_L \mu=\deg_L(M^{L,\fff}_0\wedge\mu)+\cdots +\deg_L (M^{L,\fff}_d\wedge\mu) +\deg_L (\ett_{X\setminus Z}(dd^c\log|\fff|^2_\s)^d\w\mu).
\end{equation}
If $m+1\le d$, then the last term on the right hand side 
vanishes.
\end{prop}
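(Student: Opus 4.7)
My plan is to verify \eqref{bezout2} for a building block $\mu=\tau_*\alpha$, where $\tau\colon W\to X$ is proper, $W$ is smooth and connected, and $\alpha$ is a product of components of Chern forms; the general case follows by linearity. If $\tau(W)\subset Z$, then $\fff$ vanishes on $|\mu|$, whence $M^{L,\fff}_0\w\mu=\mu$ and every remaining term on the right of \eqref{bezout2} vanishes, yielding the identity trivially. In the remaining case I pass, as in the proof of Proposition~\ref{pilt0}, to a further modification of $W$ such that $\tau^*\fff=\fff^0\fff'$, where $\fff^0$ is a section of the line bundle $L_D$ associated with the exceptional divisor $D$ and $\fff'$ is a nowhere-vanishing tuple of sections of $L_D^{-1}\otimes\tau^*L$. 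Writing $a:=\tau^*\hat\omega_L$ and $b:=\hat\omega=-\hat c_1(L_D)$, formulas \eqref{pilt2} and \eqref{pilla} give $M^{L,\fff}_k\w\mu=\tau_*([D]\w(a+b)^{k-1}\w\alpha)$ for $k\ge 1$, $M^{L,\fff}_0\w\mu=0$, and a parallel computation for $\beta:=dd^c\log|\fff|^2_\s$ yields $\ett_{X\setminus Z}\beta^d\w\mu=\tau_*((a+b)^d\w\alpha)$.

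Next I would transport \eqref{bezout2} to an identity on $W$. Setting $T_k:=\int_W a^{d-k}\w(a+b)^k\w\alpha$, one has $T_0=\deg_L\mu$, $T_d=\deg_L(\ett_{X\setminus Z}\beta^d\w\mu)$, and $\deg_L(M^{L,\fff}_k\w\mu)=\int_W a^{d-k}\w[D]\w(a+b)^{k-1}\w\alpha$ for $k\ge 1$. An elementary telescoping yields
$$
T_0-T_d=-\sum_{k=0}^{d-1}\int_W a^{d-k-1}\w b\w(a+b)^k\w\alpha,
$$
so that, after reindexing, \eqref{bezout2} is equivalent to the single identity
$$
\sum_{k=0}^{d-1}\int_W a^{d-k-1}\w\bigl([D]+b\bigr)\w(a+b)^k\w\alpha=0.
$$

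The core input is now the Poincar\'e--Lelong formula (Proposition~\ref{plf}) applied to $\fff^0$, giving $[D]+b=dd^c\log|\fff^0|^2$, combined with Stokes on the compact manifold $W$. The form $\omega_k:=a^{d-k-1}\w(a+b)^k\w\alpha$ is smooth and closed (as a product of Chern forms and a closed smooth factor), so by regularizing $u:=\log|\fff^0|^2$ by $u_\epsilon:=\log(|\fff^0|^2+\epsilon)$ and using $\int_W dd^c u_\epsilon\w\omega_k=\int_W u_\epsilon\w dd^c\omega_k=0$, together with weak continuity of $dd^c$ under $\epsilon\to 0$, one obtains $\int_W dd^c u\w\omega_k=0$ term by term. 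Making this Stokes step with a merely $L^1_\loc$ potential rigorous is the only real subtlety; the rest is algebraic bookkeeping.

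Finally, the vanishing of the remainder in \eqref{bezout2} when $m+1\le d$ is immediate: over $X\setminus Z$ the tuple $\fff$ defines a morphism $f=[\fff_0:\cdots:\fff_m]\colon X\setminus Z\to\Pk^m$ and $\beta|_{X\setminus Z}=f^*\omega_{FS}$ for the Fubini--Study form $\omega_{FS}$, so $\beta^d|_{X\setminus Z}=f^*\omega_{FS}^d=0$ for dimension reasons on $\Pk^m$ (which has complex dimension $m<d$), and hence $\ett_{X\setminus Z}\beta^d\w\mu=0$.
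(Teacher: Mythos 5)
Your proof is correct and follows essentially the same route as the paper: reduce to $\mu=\tau_*\alpha$, principalize $\tau^*\fff=\fff^0\fff'$, express $\deg_L(M^{L,\fff}_k\w\mu)$ and the remainder via \eqref{pilt2} and \eqref{pilla}, and eliminate the discrepancy using the Poincar\'e--Lelong identity $[D]+\hat\omega=dd^c\log|\tau^*\fff|^2$ together with Stokes on the compact $W$. The only difference is cosmetic — you telescope the $T_k$ first and then apply Stokes term by term, whereas the paper iteratively substitutes $\tau^*\hat\omega_L=[D]+\omega_\fff+dd^c\nu$ one factor at a time — and the Stokes step you flag as subtle is immediate, since pairing the current $dd^c u$ with a closed smooth form on compact $W$ vanishes by the very definition of distributional derivatives.
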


\begin{proof}
We can assume that $\mu=\tau_*\alpha$ where $\tau\colon W\to X$   and
$W$   is connected. If $\J$ vanishes identically on $\mu$, then both sides of \eqref{bezout2} are
equal to $\deg_L\mu$.  Otherwise we may assume that $\tau^*\fff=\fff^0\fff'$ where $\fff^0$ is a section of 
the line bundle $L_D$ 
defining the divisor $D$ on $W$ and $\fff'$ is a non-vanishing section of $\tau^*E\otimes L^{-1}_D$, where $E=\oplus_0^m L$. 
Notice that in view of \eqref{ingen},
$$
\deg_L \mu=\int_W\tau^*\hat\omega^d_L\wedge\alpha.  
$$
Let $\hat\omega=-\hat{c}_1(L_D)$. By \eqref{pilla}, 
$\omega_\fff:=dd^c\log |\fff'|^2_\s =\hat\omega+\tau^*\hat\omega_L$. 
From \eqref{doris} thus   
\begin{equation}\label{gurka}
\tau^*\hat\omega_L=[D]+ \omega_\fff +dd^c\nu,
\end{equation}
where $\nu=-\log|\tau^*\fff|$, which is a a global integrable form on $W$.  
By repeated use of Stokes' theorem we get
\begin{multline*}
\int_{W}\tau^*\hat\omega^d_L\wedge\alpha=\int_{W}\tau^*\hat\omega^{d-1}_L\wedge [D]\w\alpha+
\int_{W} \tau^*\hat\omega^{d-1}_L\wedge \omega_\fff\w\alpha=\\
\int_{W}\tau^*\hat\omega^{d-1}_L\wedge [D]\w\alpha+
\int_{W} \tau^*\hat\omega^{d-2}_L\w [D]\w \omega_\fff\w\alpha+
\int_{W}\tau^*\hat\omega^{d-2}_L\w \omega_\fff^2\w\alpha=\\
\int_{W}\tau^*\hat\omega^{d-1}_L\w [D]\w\alpha+
\int_{W}\tau^*\hat\omega^{d-2}_L\w\omega_\fff\w [D]\w \alpha+\cdots +
\int_{W}\omega_\fff^{d-1}\w [D]\w \alpha+
\int_{W}\omega_\fff^d\w\alpha.
\end{multline*}
Now \eqref{bezout2} follows from the proof of Proposition~\ref{pilt0}, cf.~\eqref{samband} and \eqref{pilt2},
since  
$$
\1_{X\setminus Z}(dd^c\log|\fff|_\s^2)^d\w\mu=\tau_*(\omega_\fff^d\w\alpha).
$$
The last statement follows since  $(dd^c\log|\fff'|_\s^2)^{m+1}=0$.  
\end{proof}

If $u$ is  $\hat\omega_L$-plurisubharmonic with analytic singularities, then one can define
$(dd^cu)^k$ for any $k$ and an analogous mass formula was proved in \cite{abw},
see \cite[Theorem~1.2]{abw}.

\begin{remark}
Assume that $\fff$ and $\mu=\tau_*\alpha$ are as in the previous proof. If $g$ is a section of $L\otimes\J$, then
$\tau^*g=\fff^0g'$ where $g'$ is a section of $L_D^{-1}\otimes\tau^*L$.  Let  $h_1,\ldots,h_d$ be a
SV-sequence on $\mu$ and $v^h\w\mu$ be the associated SV-cycle. If $h$ is sufficiently generic, then
with essentially the same proof we get
$$
\deg_L \mu=\deg_L (v_0^h\w\mu)+\cdots +\deg_L (v_d^h\w\mu) +
\deg_L (\ett_{X\setminus Z}[\div h_d]\w\ldots\w[\div h_1]\w\mu).
$$
\end{remark}

Finally let us consider the special case when $\mu$ is an ordinary cycle. With no loss of
generality we can assume that $\mu=\1_X$.  
Let $h=(h_1,h_2,\ldots, h_n)$ be a 
sequence of sections of $L\otimes\J$.  One can check that $h$  is an SV-sequence on $X$ if and only if
\begin{equation}\label{sture}
\codim\big( (X\setminus Z)\cap\{h_1=\cdots =h_k=0\}\big)= k\ {\rm or}\ \infty,\quad k=1,\ldots, n;
\end{equation}
this is the condition in \cite{SV}.  The SV-algorithm in \cite{SV} is precisely the same as used above
and the resulting SV-cycle therefore is, in our notation, cf.\ \eqref{polyp},
$$
v^h=\1_Z+\1_Z[\div h_1]+\cdots +\1_Z[\div h_n]\w\ldots \w [\div h_1].
$$

Let us  now assume that $X$ is irreducible.  If $\J$ vanishes identically, then $v^h=\1_X$ for any
SV-sequence, and we define
$v(\J,L,X)=v_0(\J,L,X)=1$.
Otherwise, let us 
assume that $\tau\colon X'\to X$ is a modification such that $\tau^*\J$ is principal, and let $D$ and $L_D$
be as before. In particular, let $\fff^0$ be a section of $L_D$ that defines the divisor $D$. 
Then $\tau^*h_k=\fff^0 h'_k$ where $h_k'$ are sections of $L_D^{-1}\otimes\tau^*L$. As in the proof of 
Proposition~\ref{pilt0}, cf.~\cite[Eq.~(6.3)]{aswy},  
we then have  
$$
v^h_k=\tau_*\big([D]\w[\div h_{k-1}']\w\ldots \w[\div h_1']\big),
$$ 
where the case $k=1$ shall be interpreted as $\tau_*[D]$. 
Choosing the sequence $h_j$ even more generic if necessary, we can in addition assume that
all the intersections  
\begin{equation}\label{hapy}
[\div h'_{k-1}]\w\ldots\w[\div h'_1]\w[D]
\end{equation} 
are proper. 
As before, let $\omega_L:=c_1(L)$ and $\omega=-c_1(L_D)$.
Then the first Chern class of $L^{-1}_D\otimes \tau^* L$ is
$\omega+\tau^*\omega_L$.  
By definition, cf.~Section~\ref{prel}, therefore \eqref{hapy} is a representative of the Chow class
$(\omega+\tau^*\omega_L)^{k-1}\cap[D]$.    
We conclude that a generic SV-sequence defines the Chow class
\begin{equation}\label{snark} 
v(\J, L,X):= \sum_{k\ge
  1}\tau_*\big((\omega+\tau^*\omega_L)^{k-1}\cap [D]\big)=
\tau_*\Big(\frac{1}{1-\omega-\tau^*\omega_L}\cap [D]\Big).
\end{equation}
It follows that this class only depends on $L$ and $\J$ but not on the choice of 
modification of $X$.  
If $X$ is not irreducible and consists of the irreducible components $X^1, X^2,\ldots$, then
we define $v(\J,L,X)=v(\J,L,X^1)+v(\J,L,X^2)+\cdots$.  
The formulas
\begin{equation}\label{gastel1}
v(\J,L,X) =\sum_{j\ge 0}\Big(\frac{1}{1-\omega_L}\Big)^j\cap s_j(\J,X), \quad
s(\J,X)=\sum_{j\ge 0}\Big(\frac{1}{1+\omega_L}\Big)^j\cap v_j(\J,L,X),
\end{equation}
are due to van~Gastel, \cite[Corollary~3.7]{gast}, and 
can be obtained by mimicking the proof of Theorem~\ref{Bgastel} above.

\section{Comparison of $\A(X)$ and $\B(X)$}\label{faksimil}

In this section we assume
that $X$ is compact and projective. In particular,
each line bundle
over $X$ has a nontrivial meromorphic section. 
Let $\widehat H^{k,k}(X)$ be the equivalence classes of 
$d$-closed $(k,k)$-currents $\mu$
on $X$ of order zero such that $\mu\sim 0$ if there is a current
$\gamma$ of order zero such that $\mu=d\gamma$.  
Notice that if $i\colon X\to  Y$ is an embedding into a smooth
manifold $Y$ of dimension $N$, then there is a natural mapping
$i_*\colon \widehat H^{n-k,n-k}(X)\to H^{N-k,N-k}(Y,\C)$ induced by the push-forward of currents.
If $X$ is smooth and $X=Y$, then this map gives an isomorphism
$\widehat H^{n-k,n-k}(X)\simeq H^{n-k,n-k}(X,\C)$; the surjectivity
is clear and the injectivity follows since a closed current of order zero
locally has a potential of order zero.

\begin{ex}\label{dota} 
Assume that $h$ is a meromorphic section of a Hermitian line bundle
$L\to X$ such that $\div h$ intersects $\mu\in\GZ_k(X)$ properly.  It follows from
Proposition~\ref{myrstack} that $[\div h]\w\mu$ and $\hat c_1(L)\w\mu$ coincide in 
$\widehat H^{1,1}(X)$. 
\end{ex}

Let $E\to X$ be a Hermitian vector bundle. Since $\hat c_k(E)$ is smooth and closed on $X$,
$\mu\mapsto \hat c_k(E)\w\mu$ is a well-defined mapping on $\widehat H(X)$.  
Another choice of metric gives rise to a smooth form that is $\hat c_k(E)+dd^c\psi$
for a suitable smooth form $\psi$ (if $k\ge 1$).   Thus  we get a mapping
$\mu\mapsto c_k(E)\w\mu$ on $\widehat H(X)$.  
Let $0\to S\to E\to Q\to 0$ be a short exact sequence of Hermitian vector bundles on $X$.  Then,
cf.~Section~\ref{profet}, there is a smooth $\gamma$ on $X$ such that 
$dd^c\gamma=\hat c(E)-\hat c(S)\w \hat c(Q)$.   Thus   
\begin{equation}\label{zebra}
\big(c(E)-c(S)\w c(Q)\big)\w \mu=0, \quad \mu\in\widehat H(X).
\end{equation}
In view of \eqref{kusin}  there is a natural mapping
$$
B_{X}\colon \B_{k}(X)\to \widehat H^{n-k,n-k}(X), \quad k=0,1,2 \ldots .
$$
If $f\colon X'\to X$ is a proper map, then  
\begin{equation}\label{dot}
B_X f_*\mu=f_*B_{X'}\mu. 
\end{equation}
If $E\to X$ is a vector bundle, then
\begin{equation}\label{puttef}
B_X(c(E)\w\mu)=c(E)\w B_X\mu,  \quad \mu\in\B(X).
\end{equation}

In view of \eqref{chow}  there is a mapping
$$
A_{X}\colon \A_{k}(X)\to \widehat H^{n-k,n-k}(X), \quad k=0,1,2 \ldots,
$$
taking a representative $\hat\mu$ of 
$\mu$  to the cohomology class determined by its Lelong
current. Clearly 
\begin{equation}\label{pumpa}
A_X\mu=B_X\mu, \quad \mu\in\mathcal{Z}(X);
\end{equation}
as a consequence, the image of $A_X$ is contained in the image of $B_X$.
If $f\colon X'\to X$ is proper as above we have from 
\eqref{rapport} that
\begin{equation}\label{dota2}
A_X f_*\mu=f_* A_{X'}\mu, \quad  \mu\in \A(X).
\end{equation}
We will use the   equalities, see \cite[Theorem 3.2]{Fult},
\begin{equation}\label{saga1}
c(E)\cap f_*\mu=f_* (c(f^*E)\cap \mu),
\end{equation}
in $\A(X)$ if $E\to X$ is a vector bundle, and
\begin{equation}\label{saga2}
c(E)\cap \mu=c(L)\cap (c(Q)\cap\mu)
\end{equation}
in $\A(X)$ if $0\to L\to E\to Q\to 0$ is exact.
In analogy with \eqref{puttef} we have:
\begin{equation}\label{bigin}
A_X\big (c(E)\cap \mu\big )=c(E)\wedge A_X\mu.
\end{equation}

\begin{proof}[Proof of Eq.~\eqref{bigin}] 
First assume that $E=L$ has rank $1$; it is then sufficient to show \eqref{bigin}
for $c_1(L)$.   
By $\Z$-linearity it is enough to look at each irreducible
component of $\mu$ separately and so we may assume that $\mu$ is represented in $\A(X)$ by an 
irreducible subvariety $V\hookrightarrow X$. Let $h$ be a meromorphic section of $L$ that is
nontrivial on $V$. 
Then $c_1(L)\cap \mu$ is represented in $\A(X)$ by the cycle $[\div h]\w [V]$ and so
$$
A_X(c_1(L)\cap \mu)=[\div h]\w[V]=  c_1(L)\wedge [V]=c_1(L)\wedge A_X\mu
$$
in $\widehat H(X)$, where the second equality follows from Example~\ref{dota} applied to $[V]$.

Next, assume that \eqref{bigin} holds for vector bundles of rank 
$\leq r$ and consider $E$ of rank $r+1$.  Let $p\colon X'\to X$, where $X'=\P(E)$, 
and let $L=\Ok(-1)$ be the tautological line subbundle
so that  we have a short exact sequence $0\to L\to p^*E\to Q\to 0$ over $X'$. 
Take $\mu'$ in $\A(X')$ such that $p_*\mu'=\mu$.
By \eqref{saga1} and \eqref{saga2},   
\begin{equation}\label{saga12}
c(E)\cap \mu=c(E)\cap p_*\mu'=p_*\big( c(p^* E)\cap \mu'\big) =
p_*\big(c(L)\cap (c(Q)\cap \mu')\big).
\end{equation}
By \eqref{ingen}, \eqref{orgie2}, \eqref{zebra}, \eqref{dota2},  \eqref{saga12}, and the induction hypothesis
\begin{multline*}
A_X (c(E)\cap \mu)= A_X  p_* \big(c(L) \cap (c(Q)\cap \mu')\big)=\\
p_* A_{X'}\big(c(L)\cap (c(Q)\cap\mu') \big)=
p_* \big(c(L)\w A_{X'} (c(Q)\cap\mu') \big)=\\
p_* \big(c(L)\w  c(Q)\w A_{X'}\mu' \big)=
p_*\big(c(p^*E)\w A_{X'}\mu' \big)=
c(E)\w p_*( A_{X'}\mu')= c(E)\w A_X\mu.
\end{multline*}
\end{proof}

\begin{proof}[Proof of Proposition~\ref{ab1}]
We have already noticed, \eqref{pumpa}, that the image of $A_X$ is contained in the image of $B_X$. 
For the converse inclusion consider $\mu=\tau_*\alpha$ in $\GZ_k(X)$, $\tau\colon W\to X$.
By \eqref{dota2},   \eqref{bigin}, \eqref{pumpa}, \eqref{puttef}, and \eqref{dot}
we have 
\begin{multline*}
A_X \tau_* (\alpha\cap \1_W) =\tau_* A_W(\alpha\cap \1_W)=\tau_*(\alpha\w A_W\1_W)=\\
 \tau_*(\alpha\w B_W\1_W)=\tau_*B_W(\alpha\w \1_W)=B_X \tau_*(\alpha\w \1_W)=B_X\mu,
\end{multline*}
and thus $B_X\mu$ is in the image of $A_X$. 
\end{proof}

\begin{proof}[Proof of Proposition~\ref{ab0}]
 Let $N=N_\J X$.
We may assume that $\mu$ is an irreducible subvariety $i\colon V\to X$. If $\J$ vanishes identically on
$V$, then $\mu$ is mapped to $\mu$ under both the Gysin and the $\B$-Gysin mapping. 
Thus we can assume that we have
a modification $\pi\colon V'\to V$ such that
$\pi^*i^*\J$ is principal on $V'$.  Let $D$ be the exceptional divisor and
$L=L_D$ the associated line bundle.
Using \eqref{pumpa}, \eqref{dota2}, and \eqref{bigin}, recalling that $s(\J,V)=s(i^*\J,V)$, 
see the introduction and Remark~\ref{asegre}, we have 
\begin{multline}\label{putte}
A_Z( c(N)\cap s(\J,V))=A_Z( c(N)\cap s(i^*\J,V))=
A_Z\Big(c(N)\cap\pi_*\Big(\frac{1}{c(L)}\cap[D]\Big)\Big)\\
=c(N)\w
A_Z\pi_*\Big(\frac{1}{c(L)}\cap[D]\Big)=
c(N)\w \pi_*A_{|D|}\Big(\frac{1}{c(L)}\cap [D]\Big)\\
=c(N)\w \pi_*\Big(\frac{1}{c(L)}\w A_{|D|}[D]\Big)=
c(N)\w \pi_*\Big(\frac{1}{c(L)}\w B_{|D|}[D]\Big). 
\end{multline}
By an analogous computation backwards with $B_{|D|}$ rather than $A_{|D|}$,
using \eqref{puttef} and \eqref{dot},  we find that the right hand side of 
\eqref{putte} is equal to $B_Z(c(N)\w S(\J,V))$.
\end{proof}

Notice in particular that
$
A_Z s(\J,X)=
B_Z S(\J,X).
$
Summing up we have seen that the $\A$- and $\B$-objects coincide
on cohomology level. However,
there are no nontrivial mappings $T_X\colon \A_k(X)\to \B_k(X)$
such that 
\begin{equation}\label{studs2}
\begin{array}[c]{ccc}
\A_k(X) & \stackrel{{f_*}}{\longrightarrow} & \A_k(Y) \\
\downarrow \scriptstyle{T_X} & &  \downarrow \scriptstyle{T_Y} \\
\B_k(X) & \stackrel{f_*}{\longrightarrow} &  \B_k(Y)
\end{array}
\end{equation}
commutes for each proper mapping mapping $f\colon X\to Y$. 
In fact, let $X$ be a one-point set $\{0\}$, and let $Y$ be a
manifold with two distinct points $p,q$ that are rationally equivalent. 
Take $f$ so that $f(0)=p$. If $T_X$ is nonzero,
then $f_*T_X[0]$ has support at $p$ and is nonzero. If \eqref{studs2}
commutes, then $T_Y[q]=T_Y[p]$ must be a nonzero point mass at $p$.
Changing the roles of $p$ and $q$ we get a contradiction since 
$[p]\neq [q]$ in $\B(Y)$.

Neither there are non-trivial mappings $T_X\colon \B_k(X)\to \A_k(X)$
such that  
\begin{equation}\label{studs}
\begin{array}[c]{ccc}
\B_k(X) & \stackrel{B_X}{\longrightarrow} & \widehat H^{k,k}(X) \\
\downarrow \scriptstyle{T_X} & &  \downarrow \scriptstyle{Id} \\
\A_k(X) & \stackrel{A_X}{\longrightarrow} &   \widehat H^{k,k}(X)
\end{array}
\end{equation}
commutes and $T_X( c_1(L)\w\mu)=c_1(L)\cap T_X \mu$ for each line bundle $L$.  
Just take $X$ that has a nontrivial line bundle with flat metric
and a meromorphic non-trivial section, as in the following example.

\begin{ex} Let $X$ be a complex $1$-dimensional torus. It is well-known that
two different points $p_1$ and $p_2$ are not rationally equivalent, i.e.,
there is no meromorphic function whose divisor is $[D]:=[p_1]-[p_2]$.
But the cohomology class determined by $[D]$ is zero.
Let $L$ be the line bundle $\Ok(D)$ equipped with some Hermitian metric.
By the Poincar\'e-Lelong formula,
$\hat c_1(L)$ is a representative of the cohomology class of $[D]$ and is thus $d$-exact.
Hence, since $\hat c_1(L)$ is smooth,
the $dd^c$-lemma shows that there is a smooth global function $\phi$ such that $dd^c\phi=\hat c_1(L)$. 
If we modify the metric on $L$ by $\exp(-\phi)$
we  have $\hat c_1(L)=0$.
\end{ex}

\def\listing#1#2#3{{\sc #1}\ {\it #2},\ #3}

\end{document}